\numberwithin{equation}{section}
\title{\Large{\uppercase{\bf Classification of polyhedral graphs by numbers of common neighbours}}}
\author{\Large{Riccardo W. Maffucci}}
\date{}
\newcommand{\Addresses}{  
		R.W.~Maffucci, \textsc{Dipartimento di Matematica, Universit\`a di Torino\\\indent Via Carlo Alberto 10, Turin 10123, Italy}\par\nopagebreak\vspace{-0.35cm}
		\textit{E-mail address}, R.W.~Maffucci: \href{mailto:riccardowm@hotmail.com}{\texttt{riccardowm@hotmail.com}}
  }
\def\cc{\mathcal{C}}
\def\cg{\mathcal{G}}
\def\cw{\mathcal{W}}
\def\ks{\mathfrak{S}}
\def\ecc{\text{ecc}}
\def\rad{\text{rad}}
\def\diam{\text{diam}}
\newtheorem{thm}{Theorem}[section]
\newtheorem{lemma}[thm]{Lemma}
\newtheorem{prop}[thm]{Proposition}
\newtheorem{cor}[thm]{Corollary}
\newtheorem{defin}[thm]{Definition}
\begin{document}
\titleformat{\section}
  {\Large\scshape}{\thesection}{1em}{}
\titleformat{\subsection}
  {\large\scshape}{\thesubsection}{1em}{}
\maketitle
\Addresses

\begin{abstract}
We propose a classification of polyhedra (planar, $3$-connected graphs) according to their type i.e., their set of quantities of common neighbours for each pair of distinct vertices.

For every (finite) set of non-negative integers, we either classify all the polyhedra of that type, or construct infinitely many polyhedra of that type, or prove that none exist.

This problem is related to the theory of strongly regular and Deza graphs, distances in graphs, and degree sequences. There is potential for application to complex networks and data science.
\end{abstract}
{\bf Keywords:} Planar graph, Polyhedron, Vertex neighbours, Common neighbours, Dominating vertex, Strongly regular graph, Deza graph, Graph radius, Graph diameter, Graph transformation, Graph algorithm, $3$-polytope, Vertex degree, Degree sequence.
\\
{\bf MSC(2020):} 05C10, 05C75, 05C69, 05C12, 05E30, 52B05, 05C85.

\tableofcontents

\section{Introduction}
This paper deals with finite, undirected graphs $G=(V,E)$ with no multiple edges or loops. We will use the notation $N(u)$ for the set of neighbours (i.e, adjacent vertices) of $u\in V(G)$, and $N(u,v)$ for the set of common neighbours of the vertices $u$ and $v$.

A polyhedron (also called $3$-polytope) is a planar, $3$-connected graph. Polyhedral graphs up to isomorphism correspond naturally to polyhedral solids up to homeomorphism. A polyhedral graph may be embedded in the surface of a sphere in a unique way \cite{whit32}, hence via stereographic projection also in the plane, once an external region has been chosen. For this reason, we will always consider polyhedra together with their embedding in the plane. For further details and properties motivating the study of polyhedral graphs, we refer the interested reader to \cite[Introduction]{maffucci2025regularity}.

A regular graph is strongly regular if there exist non-negative integers $\lambda,\mu$ such that every pair of adjacent vertices has $\lambda$ common neighbours, and every pair of non-adjacent vertices has $\mu$ common neighbours. A regular graph is a Deza graph if there exist non-negative integers $\lambda,\mu$ such that every pair of vertices has either $\lambda$ or $\mu$ common neighbours. Thus Deza graphs are a generalisation of strongly regular graphs. Equivalently, a regular graph $G$ is a Deza graph if $|A|\leq 2$, where one defines
\begin{equation}
	\label{eq:A}
A=A(G):=\{a : \exists u,v\in V(G) \text{ satisfying } |N(u,v)|=a\}.
\end{equation}
In \cite[Theorem 1.1 and Corollary 1.3]{maffucci2025classification}, we have classified all the planar Deza graphs. For the theory of Deza graphs we refer the reader to the seminal paper \cite{erickson1999deza} and to the subsequent \cite{gavrilyuk2014vertex,goryainov2021deza}. One of the most famous and intriguing open problems in this area may be `the missing Moore graph(s)' \cite{dalfo2019survey}.

In applications of graph theory such as complex networks and data science, the number of common neighbours has been investigated as a measure of similarity for vertices \cite{wang2008study,papadopoulos2015network,wang2015opinion,yao2016link,li2018similarity,wang2022common}. For the related concept of neighbourhood complexity see \cite{reidl2019characterising}, and specifically for planar graphs \cite{joret2024neighborhood}.

We say that a graph $G$ is \textbf{of type} $\bf{A}$ if $A=A(G)$. That is to say, here we will use the abbreviation `type' for the concept `set of quantities of common neighbours for each pair of distinct vertices'. In \cite[Theorem 1.2]{maffucci2025classification}, we have classified all regular polyhedra according to their types. Outside of a few special cases, for a regular polyhedron $G$ the set $A$ is either $\{0,1\}$, or $\{0,1,2\}$, or $\{0,1,2,3\}$ according to whether $G$ contains subgraphs isomorphic to the square and/or $K(2,3)$.

The set $A$ may be defined for any graph, whether regular or not. In this paper, we pose the richer question of {\bf classifying all polyhedra according to their types}, thereby generalising \cite[Theorem 1.2]{maffucci2025classification}. For every finite set of non-negative integers $A$, we either characterise all infinitely many or finitely many solutions $G$ verifying $A=A(G)$, or we construct a wide class of polyhedra $G$ with $A=A(G)$, or we prove that none exist. To describe our results in detail, we need to first define a few classes of polyhedra.

Let
\[T_\ell=P_\ell+K_2,\]
depicted in Figure \ref{fig:tl}, where $P_\ell$ is the path on $\ell$ vertices, and $+$ is the graph join operator. Note that $T_1$ is the triangle, $T_2$ the tetrahedron, and $T_3$ the triangular bipyramid. For $\ell\geq 2$, $T_\ell$ is a polyhedron.
\begin{figure}[ht]
	\centering
	\includegraphics[width=3.cm]{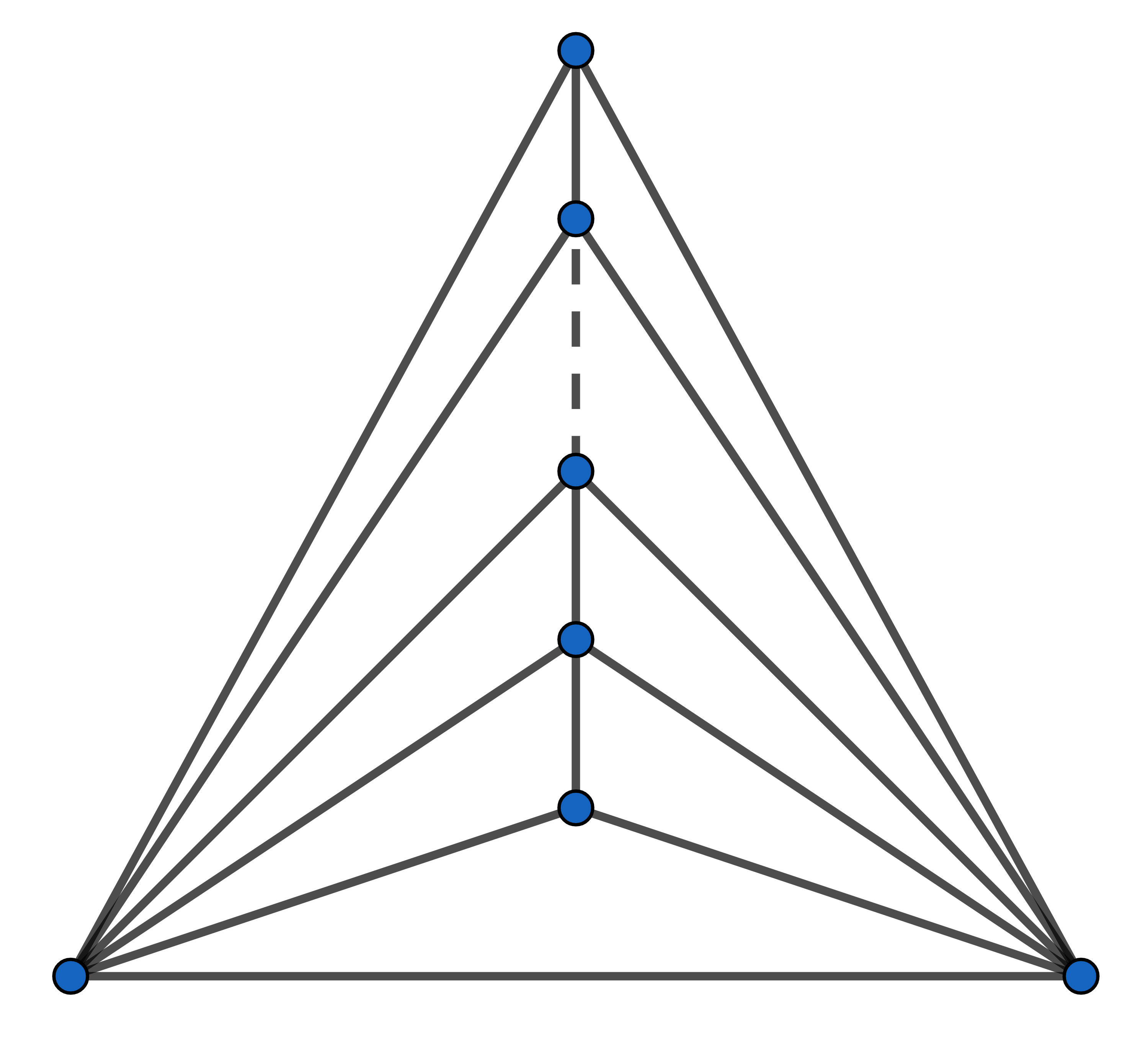}
	\caption{$T_\ell$.}
	\label{fig:tl}
\end{figure}

We define the family of polyhedra
\[\ks_1:=\{\text{bipyramids}\}\cup\{T_\ell : \ell\geq 2\}\cup\{S_i : 1\leq i\leq 10\},\]
where $S_1,S_2,\dots,S_{10}$ are the graphs in Figure \ref{fig:s}. Note that the octahedron is the square bipyramid.
\begin{figure}[ht]
	\centering
	\begin{subfigure}{0.19\textwidth}
		\centering
		\includegraphics[width=2.25cm]{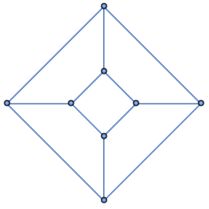}
		\caption{$S_1$.}
		\label{fig:s1}
	\end{subfigure}
	\hfill
	\begin{subfigure}{0.19\textwidth}
		\centering
		\includegraphics[width=2.25cm]{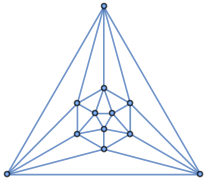}
		\caption{$S_2$.}
		\label{fig:s2}
	\end{subfigure}
	\hfill
	\begin{subfigure}{0.19\textwidth}
		\centering
		\includegraphics[width=2.25cm]{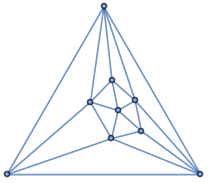}
		\caption{$S_3$.}
		\label{fig:s3}
	\end{subfigure}
	\hfill
	\begin{subfigure}{0.19\textwidth}
		\centering
		\includegraphics[width=2.25cm]{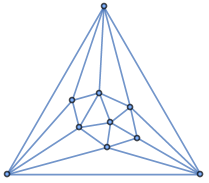}
		\caption{$S_4$.}
		\label{fig:s4}	
	\end{subfigure}
	\hfill
	\begin{subfigure}{0.19\textwidth}
		\centering
		\includegraphics[width=2.25cm]{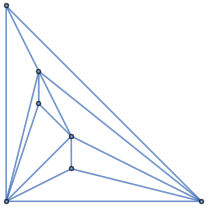}
		\caption{$S_5$.}
		\label{fig:s5}	
	\end{subfigure}
	\hfill
	\begin{subfigure}{0.19\textwidth}
		\centering
		\includegraphics[width=2.25cm]{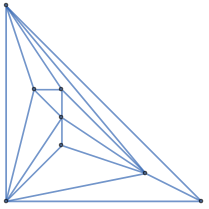}
		\caption{$S_6$.}
		\label{fig:s6}
	\end{subfigure}
	\hfill
	\begin{subfigure}{0.19\textwidth}
		\centering
		\includegraphics[width=2.25cm]{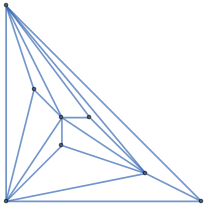}
		\caption{$S_7$.}
		\label{fig:s7}
	\end{subfigure}
	\hfill
	\begin{subfigure}{0.19\textwidth}
		\centering
		\includegraphics[width=2.25cm]{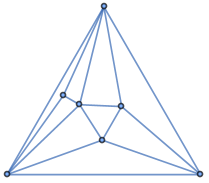}
		\caption{$S_8$.}
		\label{fig:s8}
	\end{subfigure}
	\hfill
	\begin{subfigure}{0.19\textwidth}
		\centering
		\includegraphics[width=2.25cm]{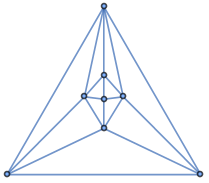}
		\caption{$S_9$.}
		\label{fig:s9}	
	\end{subfigure}
	\hfill
	\begin{subfigure}{0.19\textwidth}
		\centering
		\includegraphics[width=2.25cm]{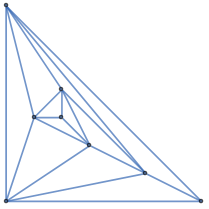}
		\caption{$S_{10}$.}
		\label{fig:s10}	
	\end{subfigure}
	\caption{The ten exceptional graphs.}
	\label{fig:s}
\end{figure}

\begin{defin}
	We call $\cw_3$ the class of polyhedra obtained from an $n$-gonal pyramid (i.e., wheel graph), $n\geq 5$, by adding a certain number of pairwise independent edges (at least one).
	\\
	We call $\cw_4$ the class of polyhedra $G$ obtained from a pyramid of base
	\[[v_1,v_2,\dots,v_n], \qquad n\geq 5\]
	by adding pairwise disjoint cycles (at least one) such that all of the following hold. Each cycle is of length different from $4$; if one of the cycles has length $3$, and $v_i,v_j$, $i<j$ belong to this cycle, then $4\leq j-i\leq n-4$; if $v_iv_j\in E(G)$, $i<j$, then $v_{i+1}v_{j-1}\not\in E(G)$.
\end{defin}
A member of $\cw_3$ appears in Figure \ref{fig:w3}.
\begin{figure}[ht]
	\centering
	\includegraphics[width=4.cm]{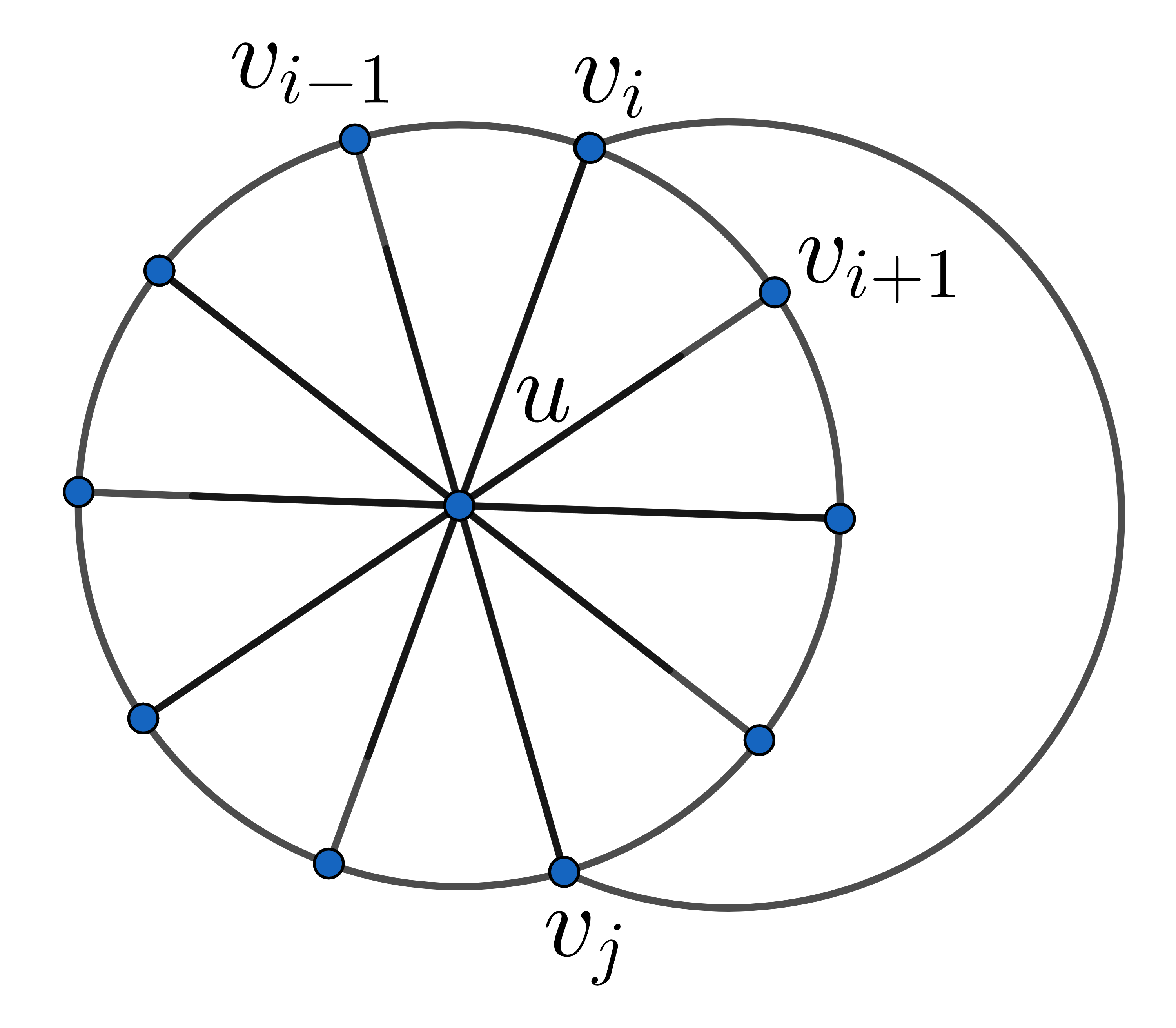}
	\caption{A member of $\cw_3$.}
	\label{fig:w3}
\end{figure}

\begin{defin}
	\label{def:B}
	Let $n\geq 4$ be even. We define the graph $B_n$ where we start with the cycle
	\[b_1,b_2,\dots,b_{2n}\]
	and add the edges $xb_i$ for $1\leq i\leq 2n$ and
	\[yb_j \qquad j\equiv 1,2\pmod 4, \quad 1\leq j\leq 2n.\]
	We also define the graph $B'_n$ where we start with the path
	\[b_1,b_2,\dots,b_{2n-2}\]
	and add the edges $xb_i$ for $1\leq i\leq 2n-2$ and
	\[yb_j \qquad j\equiv 1,2\pmod 4, \quad 1\leq j\leq 2n-2.\]
\end{defin}
Illustrations may be found in Figure \ref{fig:b}. These families of graphs verify $A(B_n)=A(B'_n)=\{1,2,n\}$. Note that the defined graphs are polyhedra, and $B_n-y$ is isomorphic to the $2n$-gonal pyramid.
\begin{figure}[ht]
	\centering
	\begin{subfigure}{0.48\textwidth}
		\centering
		\includegraphics[width=6.5cm]{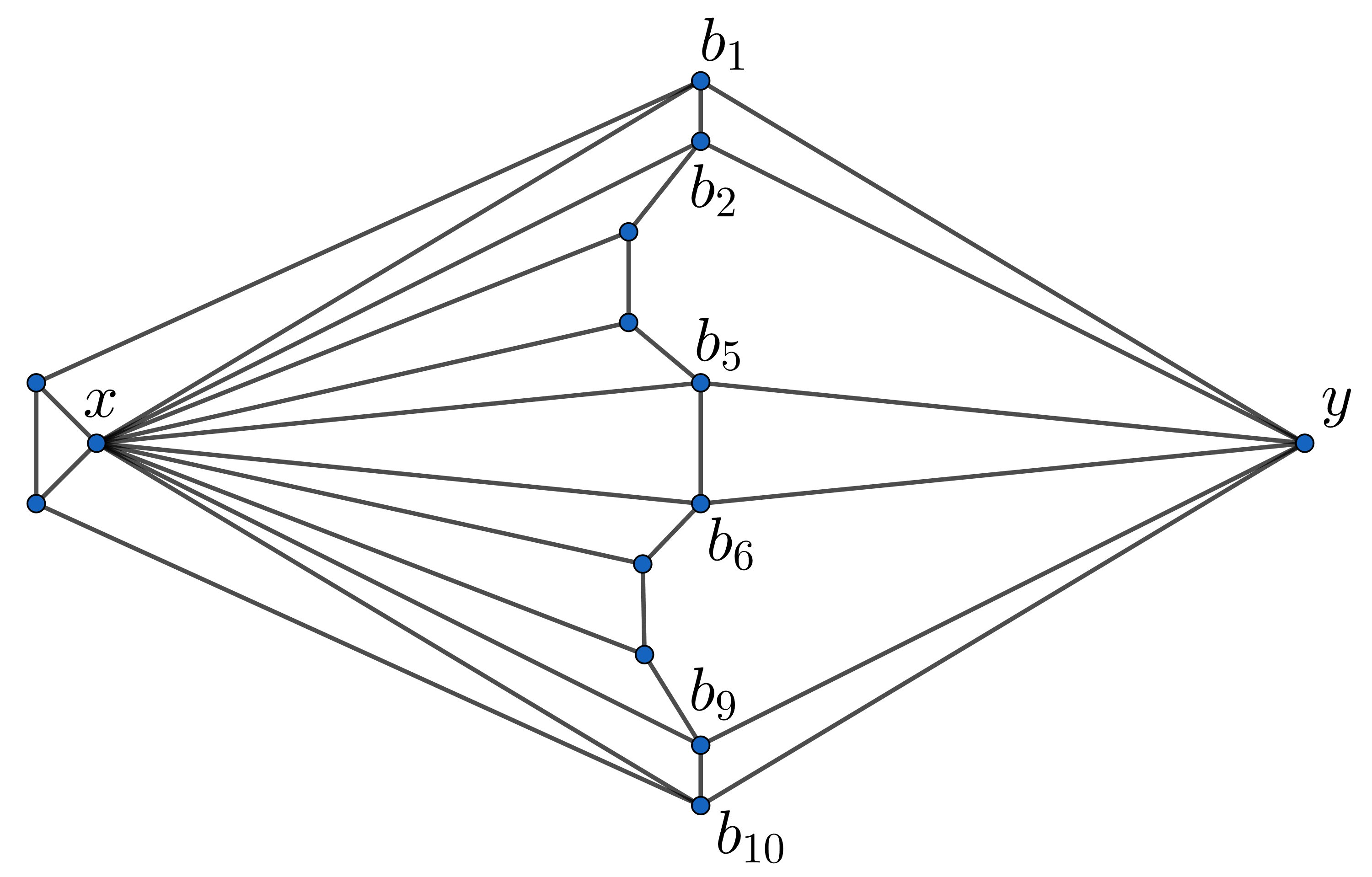}
		\caption{$B_{6}$.}
		\label{fig:bn}
	\end{subfigure}
	\hfill
	\begin{subfigure}{0.48\textwidth}
		\centering
		\includegraphics[width=6.5cm]{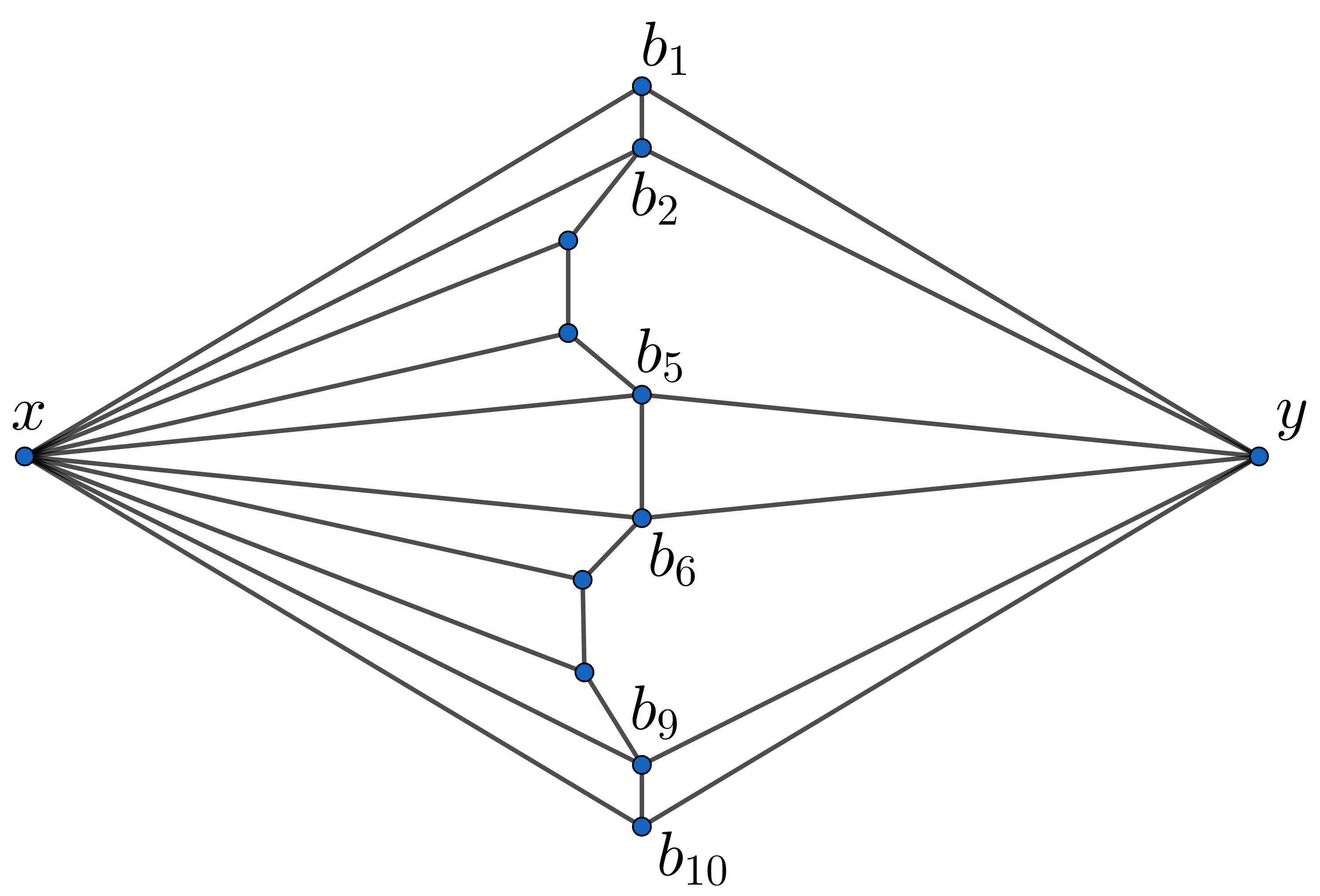}
		\caption{$B'_{6}$.}
		\label{fig:bnp}
	\end{subfigure}
	\caption{Illustrations of $B_6$ and $B'_6$.}
	\label{fig:b}
\end{figure}

We are ready to state our main results.
\begin{thm}
	\label{thm:1}
A polyhedron $G$ satisfies $1\not\in A$ if and only if $G\in\ks_1$. We have the classification of Table \ref{t:1}.
\begin{table}[ht]
	\centering
	$\begin{array}{|c|c|}
		\hline A(G)&G\\
		\hline \{2\}&\text{tetrahedron }T_2\\
		\hline \{0,2\}&\text{cube } S_1, \text{ icosahedron } S_2\\
		\hline \{2,3\}&T_3, S_3\\
		\hline \{2,4\}&\text{octahedron}\\
		\hline \{0,2,3\}&S_4\\
		\hline \{2,3,4\}&T_4,S_5,S_6,S_7,S_8,S_{9}\\
		\hline \{2,3,\ell\}, \ \ell\geq 5&\ell\text{-gonal bipyramid}, \ T_{\ell}\\
		\hline \{0,2,3,4\}&S_{10}\\
		\hline
	\end{array}$
	\caption{All polyhedra $G$ satisfying $1\not\in A$.}
	\label{t:1}
\end{table}

\end{thm}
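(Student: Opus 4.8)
The plan is to prove both implications together with the refinement of Table \ref{t:1}, treating the direction $G\in\ks_1\Rightarrow 1\notin A(G)$ as a direct computation: for a bipyramid and for $T_\ell=P_\ell+K_2$ one reads $N(u,v)$ straight off the adjacencies — the two apices share the entire equatorial cycle (resp. path), cycle/path-adjacent pairs and apex–equator pairs share exactly two, and pairs at cyclic (resp. path) distance two share three — yielding $\{2\}$, $\{2,n\}$, $\{2,3,n\}$, $\{2,3,4\}$ in the appropriate cases, while for $S_1,\dots,S_{10}$ one verifies the finitely many pairs by hand. The substance is therefore the forward direction: $1\notin A(G)$ forces $G\in\ks_1$, together with the placement into the correct row.

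The backbone I would use is a \emph{local link lemma}. Fix the essentially unique embedding \cite{whit32}, and for a vertex $v$ of degree $d$ with neighbours $u_1,\dots,u_d$ in cyclic order let $L(v)$ be the graph they induce. Since a common neighbour of $v$ and $u_i$ is exactly a neighbour of $v$ adjacent to $u_i$, the adjacent-pair part of the hypothesis says precisely that \emph{$L(v)$ has no vertex of degree $1$}. Because the edges of $L(v)$ are non-crossing chords of the cyclic order (so $L(v)$ is outerplanar) and the gap between consecutive $u_i,u_{i+1}$ is a triangular face iff $u_iu_{i+1}\in E$, this forbids every half-triangulated corner: each $u_i$ is flanked by two triangular faces, or by fewer but then carries a compensating chord. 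I would extract from this the two clean local types — the \emph{apex type}, in which all $d$ gaps are triangular so $L(v)\supseteq C_d$, and the \emph{flat type}, in which $L(v)$ is edgeless — and track separately the chords that intervene in mixed corners.

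Next I would split on $\max A$. If two vertices $p,q$ have $m:=|N(p,q)|\ge 5$, then together with their common neighbours they form a planar $K_{2,m}$ whose middle part can be cyclically ordered $w_1,\dots,w_m$ about the axis $pq$; after ruling out vertices interior to the quadrilateral regions $p\,w_i\,q\,w_{i+1}$ via $3$-connectivity and the link lemma, each such region must close up, i.e.\ $w_iw_{i+1}\in E$, so $G$ is exactly the double fan on the $w_i$. Planarity then dichotomises: if $p\not\sim q$ the $w_i$ close into a cycle and $G$ is the $m$-gonal bipyramid, whereas if $p\sim q$ the edge $pq$ obstructs closure, the $w_i$ form a path, and $G=T_m$. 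This disposes of every $A$ containing an integer $\ge 5$ and leaves only the regime $\max A\le 4$.

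That regime is where I expect the genuine work, and it is the main obstacle, since the two-apex mechanism no longer pins the graph down and the ten graphs $S_1,\dots,S_{10}$ emerge with no single clean forcing argument. I would first handle the triangle-free case (all links flat), where adjacent pairs share $0$ neighbours and the constraint becomes that no two vertices lie on exactly one common $4$-cycle; I would argue that any face of length $\ge 5$, or a second long face, produces a distance-two pair with a unique common neighbour, forcing a $3$-connected quadrangulation in which the condition collapses to the cube $S_1$. With triangles present but $\max A\le 4$, I would bound the order of $G$ by an Euler/discharging count weighing apex-type against flat-type vertices — the pair of largest common neighbourhood now has at most four common neighbours, capping the equator — and then enumerate the finitely many surviving polyhedra, matching them row by row against the octahedron, $T_3,T_4$, the small bipyramids, and $S_1,\dots,S_{10}$. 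Throughout, the two delicate points are non-facial triangles (common neighbours not arising from faces) and the preservation of $3$-connectivity, both of which I would keep under control with the link lemma before finalising each entry of Table \ref{t:1}.
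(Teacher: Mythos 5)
Your split on $\max A$ is a genuinely different entry point from the paper's: the paper first proves, by face-length arguments that use both adjacent and non-adjacent pairs, that any polyhedron with $1\notin A$ is either the cube or a triangulation of the sphere, and only then analyses a vertex of maximum degree; you instead attack a pair $p,q$ with $|N(p,q)|\ge 5$ directly. That branch of your argument can be made rigorous: an interior vertex of a region $p\,w_i\,q\,w_{i+1}$ adjacent to $p$ or to $q$ has exactly one common neighbour with a far-away $w_j$, one adjacent to neither makes $\{w_i,w_{i+1}\}$ a $2$-cut, and then connectivity of $G-p-q$ (from $3$-connectivity) together with the link condition at $p$ forces the cycle, respectively the path, on the $w_i$. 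This is close in spirit to the argument the paper uses later for Proposition \ref{prop:034}, and it disposes of all types containing an integer $\ge 5$.

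The genuine gap is the regime $\max A\le 4$ with triangles present, which you yourself flag as ``the main obstacle'' but then only gesture at. Two concrete problems. First, your apex/flat dichotomy does not follow from the link lemma: forbidding degree-$1$ vertices in $L(v)$ still allows mixed links (e.g.\ a triangle on three consecutive neighbours of $v$ plus isolated vertices), so ``$L(v)$ edgeless or $L(v)\supseteq C_d$'' is not a local consequence of $1\notin A$; eliminating mixed links requires non-adjacent pairs and amounts to the cube-or-triangulation theorem, which your plan assumes rather than proves. Second, and decisively, you have no mechanism to bound $|V(G)|$ when $\max A\le 4$. An Euler or discharging count needs a bound on $\Delta(G)$ first, and the implication ``large degree $\Rightarrow$ some pair has many common neighbours'' is exactly the hard step: for a degree-$d$ vertex of a triangulation one must show that the second-common-neighbour requirements for the pairs $v_i,v_{i+3}$ on its link cycle force the $d$-gonal bipyramid or $T_{d-1}$ (hence $\max A\ge d-1$), which is the paper's chord analysis for $d\ge 7$ (Figure \ref{fig:no1de}) plus a separate case $d=6$ producing $S_5,S_6,S_7$. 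Your phrase ``capping the equator'' presupposes this implication instead of proving it. Note also that even after reducing to $\Delta(G)\le 5$, the paper's bound $p\le 12$ comes from a degree count, and the remaining exceptional graphs $S_2,S_3,S_4,S_8,S_9,S_{10}$ are then found by exhaustive inspection of all triangulations on at most $12$ vertices; some such finite enumeration is unavoidable, and in your proposal it exists only as a promise.
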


Theorem \ref{thm:1} will be proven in Section \ref{sec:no1}.

\begin{thm}
\label{thm:2}
Let $G$ be a polyhedron of order $p$ satisfying $1\in A$, and $A'_n$ be a (finite) non-empty set of integers $\geq n$. Then $G$ is of one of the types listed in the first column of Table \ref{t:2}, and we have the corresponding conditions on $G$ stated in the second column.
\begin{table}[ht]
	\centering
	$\begin{array}{|c|c|}
		\hline A(G)&G\\
		\hline \{1,2\}&\Rightarrow p\leq 24 \text{ or } n\text{-gonal pyramid, } n\geq 5\\
		\hline \{1,2,3\}&\Rightarrow p\leq 47 \text{ or}\in\cw_3\\
		\hline \{1,2,4\}&\Rightarrow p\leq 78 \text{ or}\in\cw_4\\
		\hline \{1,2,\ell\}, \text{ even }\ell\geq 6&\Leftrightarrow B_\ell, B'_\ell\\
		\hline \{1,2,3\}\cup A'_4&\text{infinitely many polyhedra for each }A'_4\\
		\hline \{1,2,4\}\cup A'_5&\text{infinitely many polyhedra for each }A'_5\\
		\hline\{0,1\}&
		\Leftrightarrow\text{no $4$-cycles}\\
		\hline \{0,1,2\}&\text{infinitely many polyhedra; refer to Corollary \ref{cor:012}}\\
		\hline \{0,1,2\}\cup A'_3&\text{infinitely many polyhedra for each }A'_3\\
		\hline
	\end{array}$
	\caption{All types of polyhedra $G$ satisfying $1\in A$.}
	\label{t:2}
\end{table}

\end{thm}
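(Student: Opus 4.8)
The plan is to split on whether $0\in A$ and to treat the two regimes with different tools. The backbone of the whole argument is the elementary but crucial observation that two vertices $u,v$ have at least two common neighbours if and only if $G$ contains a $4$-cycle $uw_1vw_2$; hence $\max A\leq 1$ is equivalent to $G$ being $4$-cycle-free. This immediately settles the $\{0,1\}$ row: if $1\in A$ and $G$ has no $4$-cycle then $A\subseteq\{0,1\}$, and since the Friendship Theorem forbids $A=\{1\}$ for a $3$-connected graph (windmills have a cut vertex), we obtain $A=\{0,1\}$; conversely $A=\{0,1\}$ forces no $4$-cycle because $2\notin A$. From here on I may therefore assume $2\in A$, i.e.\ a $4$-cycle is present.

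Next I would extract the structural meaning of the dichotomy. When $0\notin A$ every pair of vertices has a common neighbour, so $\diam(G)\leq 2$ and every edge lies in a triangle. I would then show this forces a nearly dominating vertex: taking a pair $x,y$ realising $\max A=m$, the $m$ common neighbours $w_1,\dots,w_m$ together with $x,y$ form a $K_{2,m}$, whose planar structure is rigid (it forces a cyclic arrangement of the $w_i$ with the only natural faces being the quadrilaterals $xw_iyw_{i+1}$). Three-connectivity then severely limits what can sit inside these quadrilaterals and outside the $K_{2,m}$, and a counting argument via Euler's formula shows that if neither $x$ nor $y$ is adjacent to all remaining vertices then $p$ is bounded by the stated constant. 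Otherwise one of them is an apex, $G$ is a wheel with added chords, and deciding which chords are compatible with a prescribed $A$ produces the pyramid family (for $\{1,2\}$) and the classes $\cw_3,\cw_4$ (for $\{1,2,3\}$ and $\{1,2,4\}$), the finitely many remaining graphs accounting for the thresholds $p\leq 24,47,78$.

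For the single-value rows $\{1,2,\ell\}$ with $\ell\geq 6$, the apex analysis localises the problem to a second near-apex $y$ whose neighbourhood is a subset of a rim cycle $b_1,\dots,b_{2n}$; requiring that no pair attains $3$ or $4$ common neighbours forces the neighbours of $y$ into the periodic pattern $j\equiv 1,2\pmod 4$, which closes up on the cycle only when its length is a multiple of $4$. This is precisely where the parity constraint (even $\ell$) and the rigidity $G\in\{B_\ell,B'_\ell\}$ originate, and I expect confirming that no other chord placement survives to be the most delicate bookkeeping. For the ``infinitely many'' rows I would argue by explicit construction: prisms and their relatives realise $\{0,1,2\}$ in all large orders, while augmenting a pyramid (respectively a $\cw_4$-type graph) by controlled gadgets that create prescribed extra common-neighbour counts realises $\{0,1,2\}\cup A'_3$, $\{1,2,3\}\cup A'_4$ and $\{1,2,4\}\cup A'_5$; in each case one verifies that the gadget introduces exactly the intended new values of $|N(u,v)|$ and no others.

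The main obstacle, and the part that will consume most of the work, is the $0\notin A$ regime: proving that beyond the explicit thresholds a near-dominating apex must exist, and then establishing the rigidity statements that pin the large solutions down to the named families. The constants $24,47,78$ are artefacts of the Euler and discharging estimates that bound the number of vertices of each degree once the apex structure fails, so producing them in closed form — rather than merely as ``some bound'' — will require a careful, case-by-case discharging scheme keyed to the forbidden common-neighbour values $3$ and $4$.
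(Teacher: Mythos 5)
Your proposal contains two genuine gaps, both at load-bearing points. The first is the step ``From here on I may therefore assume $2\in A$.'' This is a non sequitur: the presence of a $4$-cycle only produces a pair of vertices with \emph{at least} two common neighbours, i.e.\ $\max A\geq 2$; it does not put the value $2$ itself into $A$. The paper needs a genuinely nontrivial planarity argument here (Lemma \ref{le:yes2}: in a planar graph, $a\in A$ for some $a\geq 3$ forces $2\in A$, proved by forcing a square-pyramid subgraph and reaching a contradiction). Without this lemma you cannot make your reduction, and — more seriously — you cannot prove the completeness assertion of the theorem, namely that the first column of Table \ref{t:2} lists \emph{all} types with $1\in A$: sets such as $\{1,3\}$, $\{0,1,5\}$ or $\{1,4,7\}$ are excluded only by exactly this lemma, and nothing in your outline addresses them. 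Relatedly, your treatment of the $\{1,2,\ell\}$ row silently needs to exclude $xy\in E(G)$ (equivalently, radius-$1$ solutions of this type); the paper does this via Lemma \ref{le:3o4r1}, and no step in your sketch plays that role.

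The second gap is quantitative: the thresholds $24$, $47$, $78$ are never derived, and you explicitly defer them to a ``case-by-case discharging scheme'' that you do not produce. In the paper these constants are not Euler/discharging artefacts at all: they come from a cited domination theorem (every planar graph of diameter $2$, with a single exception, has domination number at most $2$), which yields $p\leq 4M^2+3M+2$ with $M=\max A$ (Lemma \ref{le:no0}), evaluated at $M=2,3,4$ and combined with the radius-$1$ classification (Proposition \ref{prop:rad1}, Corollary \ref{cor:1234}). Since the theorem asserts these specific numbers, an argument yielding merely ``some bound'' does not prove the stated rows, and there is no evidence your proposed scheme would land at or below these values. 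To give credit where due: your use of the Friendship Theorem to rule out $A=\{1\}$ is a correct and slicker alternative to the paper's Euler-formula count (Lemma \ref{le:1}), and your mod-$4$ periodicity mechanism for the $\{1,2,\ell\}$ row is the same one driving the paper's Proposition \ref{prop:034}; but these correct fragments do not close the two gaps above, each of which invalidates several rows of the table as you have argued them.
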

Together, Theorems \ref{thm:1} and Theorems \ref{thm:2} completely classify all types of polyhedra. Theorem \ref{thm:2} will be proven in Sections \ref{sec:pre}, \ref{sec:rad1}, \ref{sec:compl}, and \ref{sec:wide}, by presenting intermediate results that will populate Table \ref{t:2}.

For the types $\{1,2\}$, $\{1,2,3\}$, $\{1,2,4\}$, and $\{1,2,\ell\}$ with even $\ell\geq 6$, in Sections \ref{sec:rad1} and \ref{sec:compl} we will prove that, outside a finite set of solutions, all remaining ones are either pyramids or belong to one of the well-understood classes $\cw_3$, $\cw_4$, $B_\ell$, $B'_\ell$.

For each of the remaining types in Table \ref{t:2} $\{0,1\}$, $\{0,1,2\}$, $\{1,2,3\}\cup A'_4$, $\{1,2,4\}\cup A'_5$, and $\{0,1,2\}\cup A'_3$ there exist wide classes of polyhedra of each type. For the cases $\{0,1\}$ and $\{0,1,2\}$, this was shown in \cite{maffucci2025classification}. For instance one has the following explicit classes. All polyhedra of girth $5$ (equivalently, all polyhedra of minimum face length $5$) verify $A=\{0,1\}$. With the exception of the cube, all $3$-regular (i.e., cubic) polyhedra containing a quadrangular face verify $A=\{0,1,2\}$ \cite[Theorem 1.2]{maffucci2025classification}.

Here we will greatly strengthen the results for the types $\{0,1\}$ and $\{0,1,2\}$. In Section \ref{sec:pre}, we will show that a polyhedron satisfies $A=\{0,1\}$ if and only if it does not contain $4$-cycles. In Section \ref{sec:rad1}, we will give necessary and sufficient conditions for a polyhedron on at least $25$ vertices to verify $A=\{0,1,2\}$.

In Section \ref{sec:wide}, for each of the types $\{1,2,3\}\cup A'_4$, $\{1,2,4\}\cup A'_5$, and $\{0,1,2\}\cup A'_3$ we will construct wide classes of polyhedra of the corresponding type. For all sets of non-negative integers $A$ that do not appear in Tables \ref{t:1} and \ref{t:2}, we will prove in Section \ref{sec:compl} that there are no polyhedra $G$ satisfying $A=A(G)$.


\paragraph{Preliminaries.}
A graph is $k$-connected if it has at least $k+1$ vertices, and however one removes fewer than $k$ of them, the resulting graph is connected.
\\
A graph is planar if it may be embedded in the plane with edges crossing only at vertices. A plane graph is a planar graph considered together with an embedding in the plane. Polyhedral graphs have a unique embedding up to choosing the external region, hence we tacitly consider them together with their embedding.
\\
An outerplanar graph is a planar graph such that there exists a planar embedding where all vertices lie on the boundary of one region.
\\
For a (non-trivial) connected graph $G$, the eccentricity of a vertex $u$
\[\ecc(u)\]
is the maximal distance from $u$ to $v\in V(G)$. The greatest eccentricity among all vertices is the diameter of $G$
\[\diam(G).\]
Equivalently, $\diam(G)$ is the maximal distance between any pair of vertices in $G$. The radius of $G$
\[\rad(G)\]
is the minimal vertex eccentricity. It is well-known that for any connected graph $G$ we have
\[\rad(G)\leq\diam(G)\leq 2\rad(G).\]
A graph has diameter $1$ if and only if it is a complete graph. A graph has radius $1$ if and only if there exists a vertex adjacent to all other vertices, also called a dominating vertex.
\\
Given a plane graph $G$ and $u\in V(G)$, we define the plane neighbourhood of $u$ in $G$ to be the subgraph $\Gamma_u(G)$ of $G$ defined by
\begin{equation}
\label{eq:plne}
E(\Gamma_u(G))=\{e\in E(G) : e \text{ lies on a region that contains } u\}.
\end{equation}
Everywhere the notation $A'_n$ indicates a finite, non-empty set of integers $\geq n$.

\paragraph{Plan of the paper.} Section \ref{sec:no1} is entirely dedicated to proving Theorem \ref{thm:1}. The next four sections are dedicated to proving Theorem \ref{thm:2}. In Section \ref{sec:pre} we will lay out the preliminary work. In Section \ref{sec:rad1} we will focus on the important class of polyhedra of graph radius $1$, that will be useful in what follows. In Section \ref{sec:compl} we will complete the classification of types of polyhedra stated in Table \ref{t:2}. In Section \ref{sec:wide} we will construct a wide class of polyhedra for a few of the types in \ref{t:2}. Code for these constructions appears in Appendix \ref{app:a}. The results used to prove Theorem \ref{thm:2} will be recapped at the end of Section \ref{sec:wide}.

\section{Types of polyhedra with $1\not\in A$: Proof of Theorem \ref{thm:1}}
\label{sec:no1}

Let $1\not\in A$. Our first goal is to show that $G$ is either the cube or a triangulation of the sphere (maximal planar graph). Assume by contradiction that $G$ contains a face
\[[u_1,u_2,\dots,u_n], \quad n\geq 5.\]
Let $u_{n+1}:=u_1$ and $u_{n+2}:=u_2$. Since $1\not\in A$, then for every $1\leq i\leq n$ the vertices $u_i$ and $u_{i+2}$ have a common neighbour other than $u_{i+1}$. By planarity, the only possibility is that there exists $v\in V(G)$ adjacent to all of $u_1,u_2,\dots,u_n$. Moreover, for every $1\leq i\leq n$, the vertices $u_i,u_{i+1}$ have a common neighbour $w_i\neq v$.

Next, for every $1\leq i\leq n$, the vertices $w_i,u_{i+2}$ have a common neighbour other than $u_{i+1}$. By planarity, the only possibility is that $v$ is adjacent to all of $w_1,w_2,\dots,w_n$, as in Figure \ref{fig:no1a}.
\begin{figure}[ht]
	\centering
	\begin{subfigure}{0.48\textwidth}
		\centering
		\includegraphics[width=4.cm]{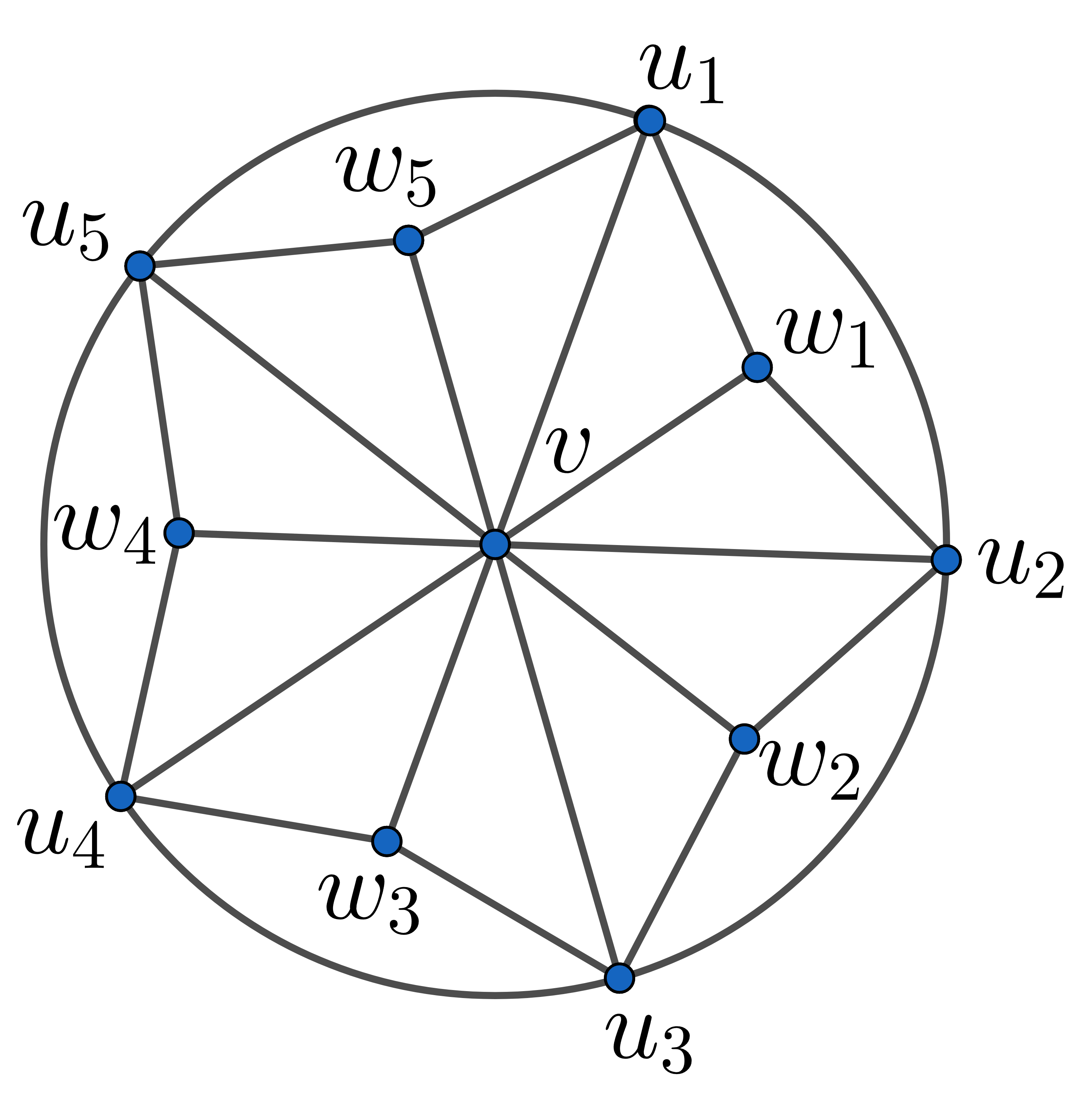}
		\caption{}
		\label{fig:no1a}
	\end{subfigure}
	\hspace{-1.cm}
	\begin{subfigure}{0.48\textwidth}
		\centering
		\includegraphics[width=3.5cm]{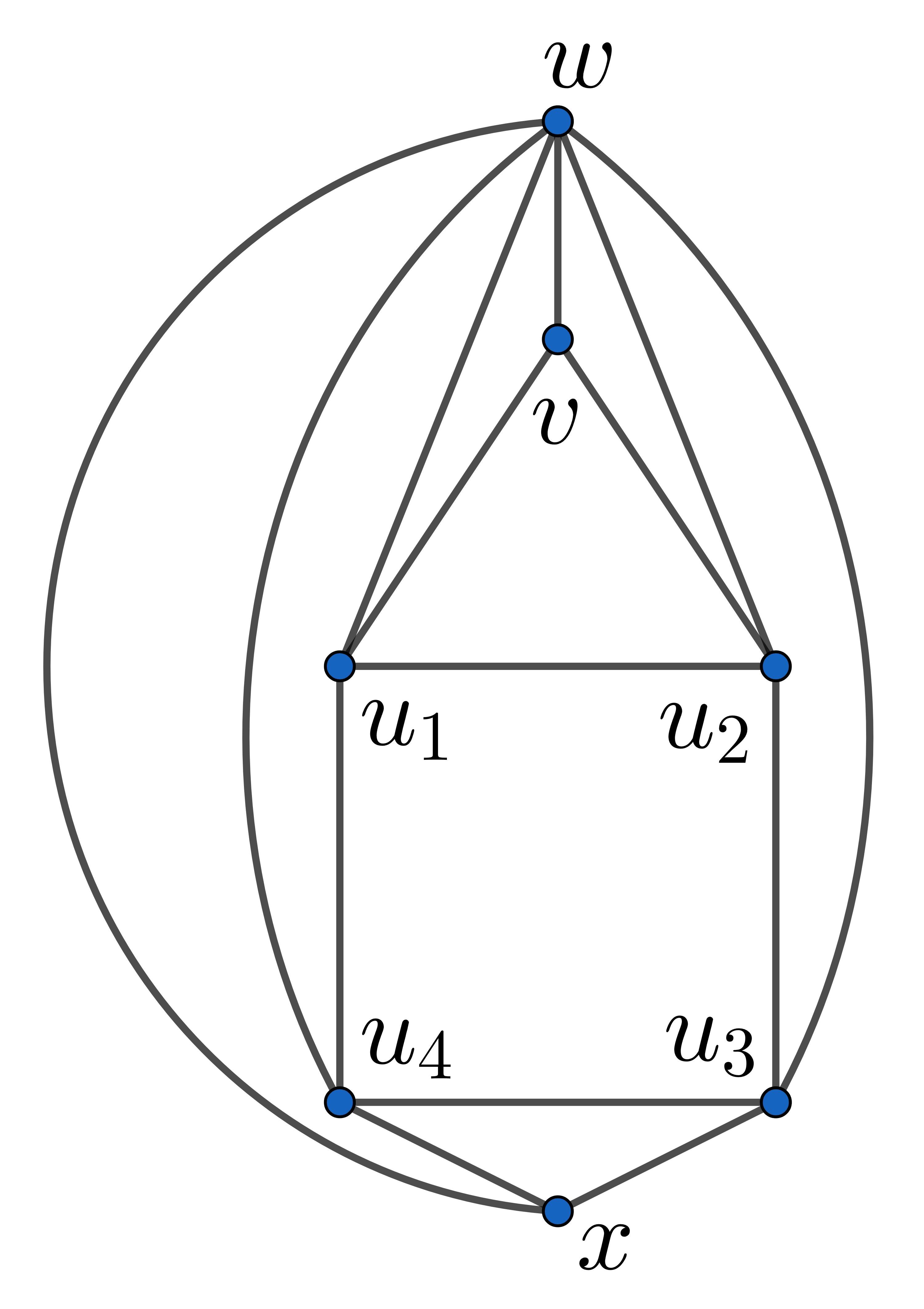}
		\caption{}
		\label{fig:no1b}
	\end{subfigure}
	\caption{Impossible situations for $1\not\in A$.}
	\label{fig:no1ab}
\end{figure}

Now by planarity,
\[w_1u_3,\ w_1u_4,\ w_3u_1,\ w_3u_2\not\in E(G),\]
thus $N(w_1,w_3)=\{v\}$, contradiction.

Next, assume by contradiction that $G$ contains adjacent faces
\[[u_1,u_2,u_3,u_4]\quad\text{ and }\quad[u_1,u_2,v].\]
Since $1\not\in A$, there exists $w\in V(G)$, $w\neq v$, adjacent to $u_1,u_2$.

Similarly, $v,u_4$ have a common neighbour other than $u_1$, and $v,u_3$ have a common neighbour other than $u_2$. Hence by planarity, $w$ is also adjacent to $v,u_3,u_4$. Now $u_3,u_4$ have a common neighbour $x\neq w$. Since $N(x,u_1)\neq\{u_4\}$, we deduce that $xw\in E(G)$, as in Figure \ref{fig:no1b}. By planarity, $N(v,x)=\{w\}$, contradiction.

It follows that $G$ is either a quadrangulation or a triangulation of the sphere. Let $G$ be a quadrangulation, and
\[[u_1,u_2,u_5,u_4]\quad\text{ and }\quad[u_2,u_3,u_6,u_5]\]
be adjacent faces. We write
\[N(u_1,u_3)\supseteq\{u_2,v\}\quad\text{ and }\quad N(u_4,u_6)\supseteq\{u_5,w\}.\]
We cannot have $v=w$, otherwise $u_1,u_4,v$ would be a triangle in $G$, but $G$ has no odd length cycles, since a quadrangulation of the sphere is a bipartite graph. Thus $v\neq w$. Now $v,u_4$ have a common neighbour other than $u_1$, and likewise $w,u_1$ have a common neighbour other than $u_4$. Since $G$ is planar and bipartite, we obtain $vw\in E(G)$. So far, we have drawn the graph of the cube. We claim that there are no more vertices in $G$. By contradiction, let $x$ be a new vertex. By the symmetries of the cube, we may assume w.l.o.g.\ that $x$ lies inside the $4$-cycle $v,u_1,u_2,u_3$, and is adjacent to $u_1$, as in Figure \ref{fig:no1c}. Since $G$ is planar and bipartite, $u_4$ cannot be adjacent to any of $u_2,u_3,v$, thus $N(x,u_4)=\{u_1\}$, contradiction. Hence $G$ is the cube.
\begin{figure}[ht]
	\centering
	\includegraphics[width=4.cm]{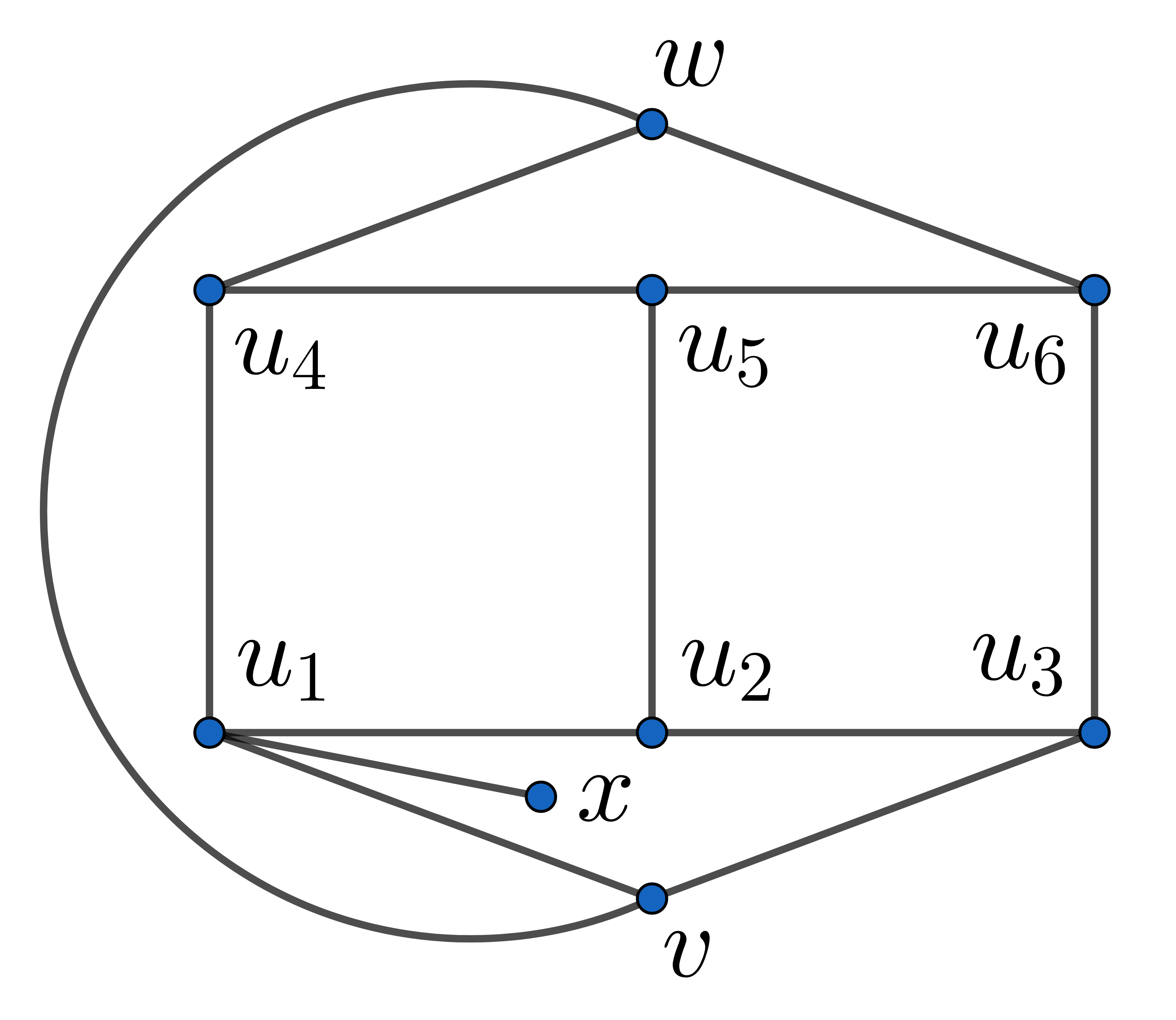}
	\caption{Case where $G$ is a quadrangulation and $1\not\in A$.}
	\label{fig:no1c}
\end{figure}

Henceforth we may assume that $G$ is a triangulation. Let $u\in V(G)$ be of maximal degree in $G$,
\[d=\deg(u)=\Delta(G),\]
and
\begin{equation}
	\label{eq:cyv}
v_1,v_2,\dots,v_{d}
\end{equation}
be its neighbours in cyclic order around $u$ in the planar immersion of $G$. Since $G$ is a triangulation, \eqref{eq:cyv} is a cycle of $G$ (in other words, the plane neighbourhood $\Gamma_u(G)$ is a $d$-gonal pyramid with apex $u$). We consider the indices modulo $d$, identifying $v_{-1}=v_{d-1}$, $v_0=v_d$, $v_{d+1}=v_1$, and so forth.

Suppose for the moment that $d\geq 7$. We claim that if there exist $1\leq i<i'\leq d$ with $2\leq i'-i\leq d-2$ such that $v_iv_{i'}\in E(G)$ as in Figure \ref{fig:no1d}, then there exists $j$ such that $v_j$ is adjacent to all of
\begin{equation}
	\label{eq:j}
v_1,v_2,\dots,v_{j-1},v_{j+1},v_{j+2},\dots,v_d.
\end{equation}
That is to say, the graph constructed so far is $T_{d-1}$ of Figure \ref{fig:tl}. To prove the claim, since $|N(v_{i+2},v_{i'+2})|\neq 1$, then by planarity both of $v_{i+2},v_{i'+2}$ are adjacent to either $v_i$ or $v_{i'}$, w.l.o.g.\ say $v_i$. Thus $v_iv_{i+2}\in E(G)$. We now consider
\[N(v_{i+1},v_\ell)\]
with
\begin{equation*}
\ell\in\{1,2,\dots,d\}\setminus\{i-1,i,i+1,i+2,i+3\}.
\end{equation*}
Since $|N(v_{i+1},v_\ell)|\neq 1$, then $v_\ell$ is adjacent to either $v_{i}$ or $v_{i+2}$. We cannot have $v_{i-2}v_i,v_{i+2}v_{i+4}\in E(G)$ (Figure \ref{fig:no1e}), otherwise by planarity $N(v_{i-1},v_{i+3})=\{u\}$ would follow. Hence either both of $v_{i-2},v_{i+4}$ are adjacent to $v_i$ or both are adjacent to $v_{i+2}$, say w.l.o.g.\ $v_i$. By planarity $v_i$ is adjacent to $v_1,v_2,\dots,v_{i-1},v_{i+1},v_{i+2},v_{i+4},v_{i+5},\dots,v_d$. Since $|N(v_{i-1},v_{i+3})|\neq 1$ and by planarity, $v_{i+3}v_i\in E(G)$ so that the claim is proven.
\begin{figure}[ht]
	\centering
	\begin{subfigure}{0.48\textwidth}
		\centering
		\includegraphics[width=4.25cm]{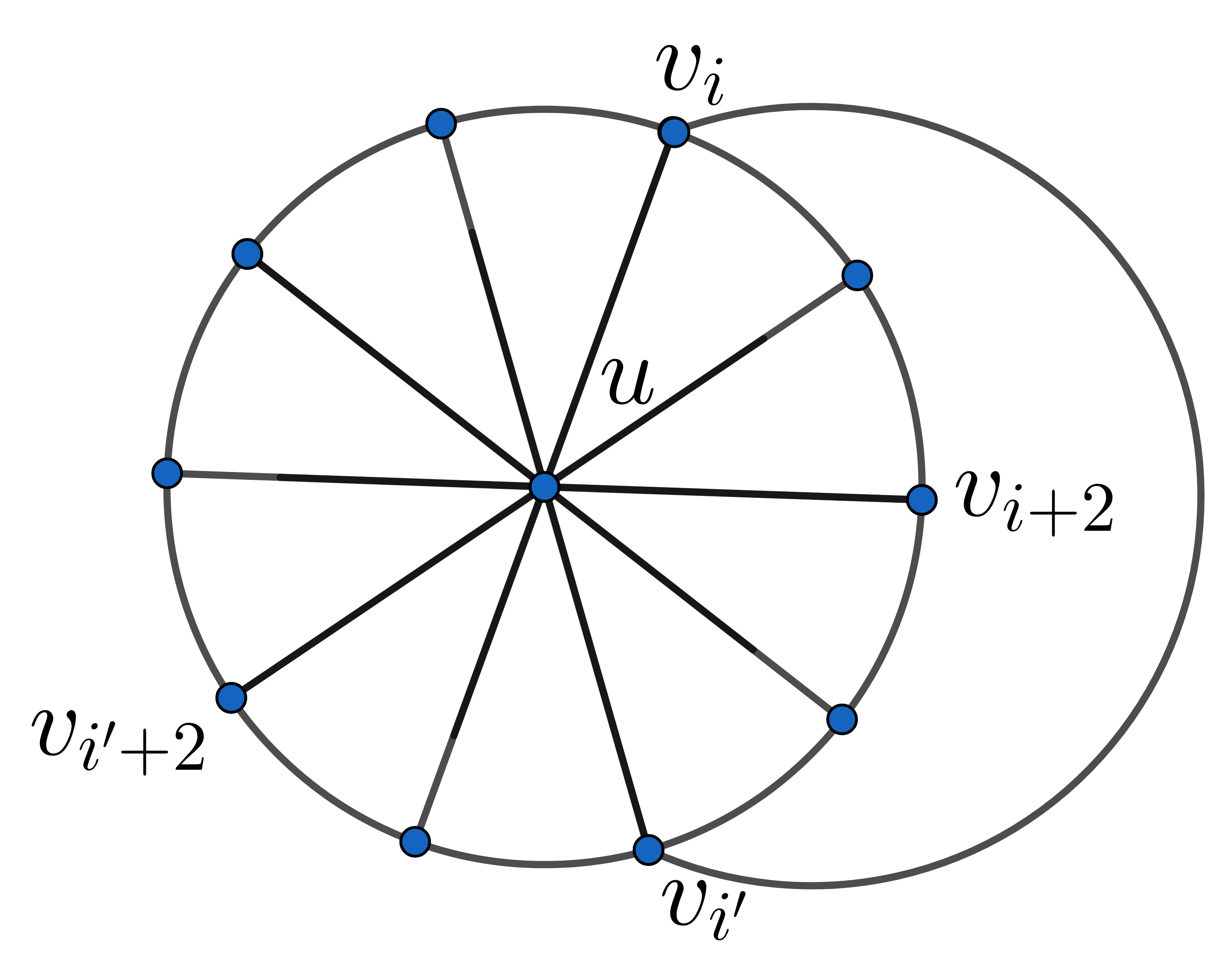}
		\caption{}
		\label{fig:no1d}
	\end{subfigure}
	\hspace{-1.cm}
	\begin{subfigure}{0.48\textwidth}
		\centering
		\includegraphics[width=4.cm]{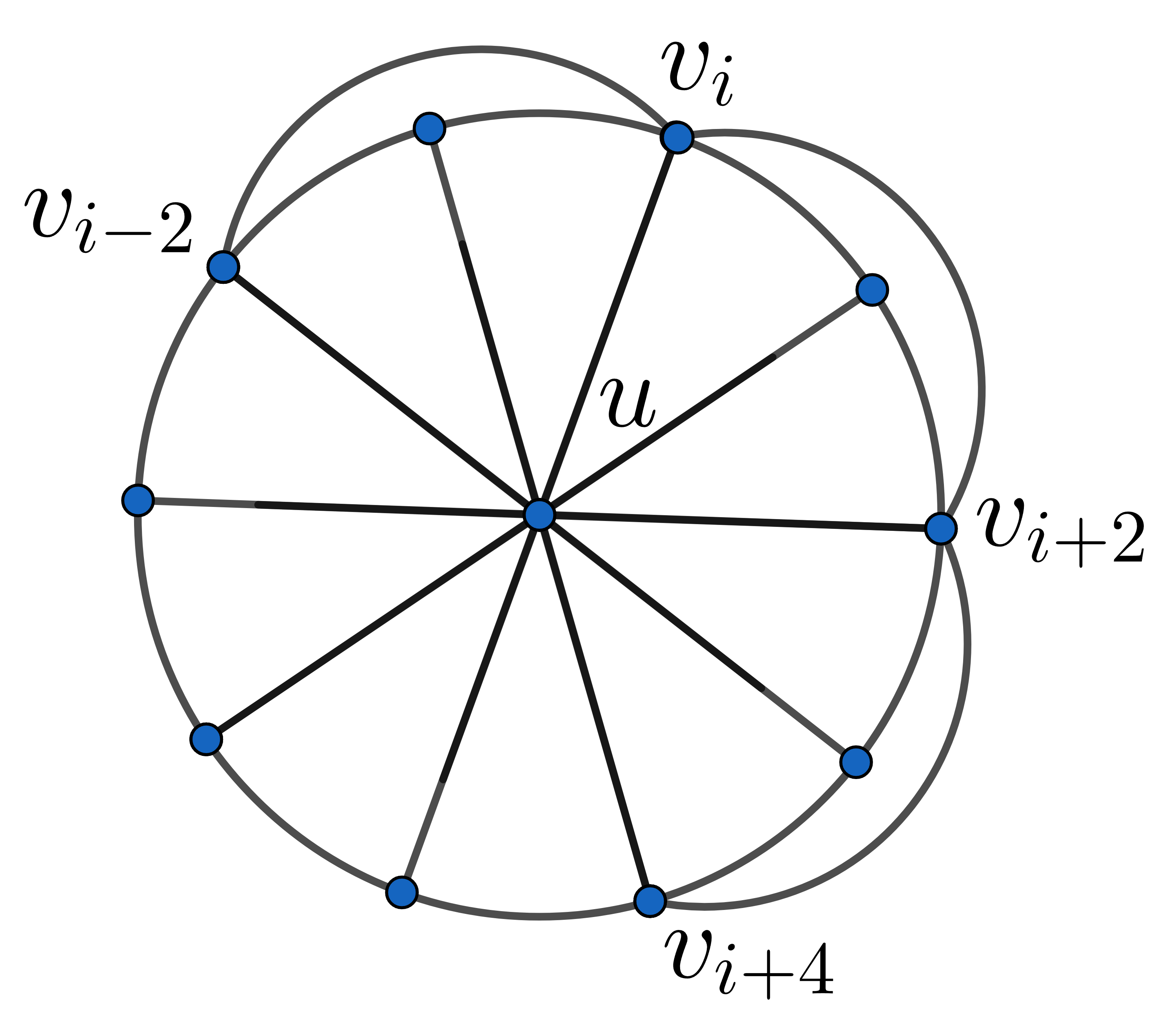}
		\caption{}
		\label{fig:no1e}
	\end{subfigure}
	\caption{There exists $j$ such that $v_j$ is adjacent to all of \eqref{eq:j}.}
	\label{fig:no1de}
\end{figure}

Now assume instead that for all $1\leq i<i'\leq d$ with $2\leq i'-i\leq d-2$ one has $v_iv_{i'}\not\in E(G)$. For every $1\leq i\leq d$, the vertices $v_i$ and $v_{i+3}$ have another common neighbour apart from $u$. Then for each $1\leq i\leq d$ 
there exists a vertex $w_i\neq u$ adjacent to $v_i$ and $v_{i+3}$. By planarity, we have in fact $w_1=w_2=\dots=w_d=:w$, and $w$ is adjacent to all of \eqref{eq:cyv}, so that the graph constructed so far is the $d$-gonal bipyramid. To summarise, if $G$ is a triangulation with $d=\Delta(G)\geq 7$, then the graph constructed so far is either the $d$-gonal bipyramid or $T_{d-1}$. It is easy to see, as in the case of quadrangulations, that for each of these possibilities there can be no more vertices, so that $G$ itself is either the $d$-gonal bipyramid or $T_{d-1}$.

Now let $d=6$. If for all $1\leq i<i'\leq d$ with $2\leq i'-i\leq d-2$ one has $v_iv_{i'}\not\in E(G)$, then since
\[|N(v_1,v_4)|,|N(v_2,v_5)|,|N(v_3,v_6)|\neq 1,\]
as above there exists $w$ adjacent to each of $v_1,v_2,v_3,v_4,v_5,v_6$, thus $G$ is the $6$-gonal bipyramid. Otherwise, as seen above we may assume w.l.o.g.\ that $v_1v_3\in E(G)$.
As $|N(v_2,v_5)|\neq 1$, we also have w.l.o.g.\ $v_5v_3\in E(G)$. As $G$ is a triangulation, we know that either $v_3v_6\in E(G)$, or $v_1v_5\in E(G)$, or there exists a new vertex inside of the cycle $\cc: v_1,v_3,v_5,v_6$. In the first case, so far we have the graph $T_{5}$. In the second case, so far we have $S_5$ of Figure \ref{fig:s5}.

Let us inspect the third case. By $3$-connectivity of $G$, at least three of $v_1,v_3,v_5,v_6$ are adjacent to vertices inside of $\cc$. W.l.o.g., we may take a new vertex $x$ with $xv_1\in E(G)$. Analysing $N(x,v_2)$, we see that $xv_3\in E(G)$. Now analysing $N(x,v_4)$, we see that $xv_5\in E(G)$. As before, since $G$ is a triangulation, either $xv_6\in E(G)$, or $v_1v_5\in E(G)$, or there exists a new vertex $y$ inside of the cycle $\cc': v_1,x,v_5,v_6$. The graph constructed so far is $S_6$ of Figure \ref{fig:s6} in the first sub-case and $S_7$ of Figure \ref{fig:s7} in the second sub-case. In the third sub-case, we have w.l.o.g.\ $yv_1\in E(G)$, so that by planarity $N(y,v_2)=\{v_1\}$, contradiction. Having constructed one of $T_5,S_5,S_6,S_7$, as above we see that no more vertices may be added while respecting $1\not\in A$. To recap, if $G$ is a triangulation with $d=\Delta(G)\geq 6$, then $G$ is either the $d$-gonal bipyramid, or $T_{d-1}$, or one of $S_5,S_6,S_7$.

Now instead let $G$ be a triangulation satisfying $\Delta(G)\leq 5$. Denote by $p_i$ the number of vertices in $G$ of degree $i$. Using the handshaking lemma and the fact that $G$ is maximal planar, we write
\[3p_3+4p_4+5p_5=2q=6p-12=6p_3+6p_4+6p_5-12,\]
whence
\[12=3p_3+2p_4+p_5\geq p_3+p_4+p_5=p.\]
We inspected all triangulations with up to $12$ vertices (code available on request), where we found, apart from bipyramids, $T_\ell$ for $2\leq\ell\leq 10$, and $S_5,S_6,S_7$, six more graphs satisfying $1\not\in A$, namely $S_i$ with $i=2,3,4,8,9,10$ in Figure \ref{fig:s}.

We have classified all polyhedra $G$ (hence also all types of polyhedra) satisfying $1\not\in A(G)$, according to Table \ref{t:1}. The proof of Theorem \ref{thm:1} is complete.

\section{Preliminaries for the scenario $1\in A$}
\label{sec:pre}
The ultimate goal of the rest of this paper is to prove Theorem \ref{thm:2}. In the present section, we will establish a few auxiliary lemmas concerning $A(G)$ for generic graphs $G$.

\begin{lemma}
	\label{le:yes2}
Let $G$ be a planar graph. If there exists $a\geq 3$ such that $a\in A$, then $2\in A$.
\end{lemma}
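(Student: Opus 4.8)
The plan is to prove the contrapositive: assuming $2\notin A$, I will show that no pair of distinct vertices can have $3$ or more common neighbours, so that no integer $a\geq 3$ lies in $A$. Suppose instead that some pair $u,v$ has $a\geq 3$ common neighbours $w_1,\dots,w_a$. These vertices span a subgraph $K(2,a)$, whose planar embedding carves out $a$ ``lens'' regions, each bounded by a $4$-cycle $u w_i v w_{i+1}$ through cyclically consecutive neighbours; since $a\geq 3$, at least two of these lenses are bounded.

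The engine of the argument is a single configuration that I will both \emph{seed} and \emph{shrink}. Call a configuration \emph{good} if it consists of a pair $x,y$, two common neighbours $p,q$ forming a $4$-cycle $xpyq$ (so that $x,y$ are opposite corners), one of whose two sides is a bounded region $R$, together with a vertex $r\in\operatorname{int}(R)$ adjacent to both $x$ and $y$. To seed one, I fix cyclically consecutive $w_1,w_2$ whose lens $R_0$ is bounded, so that no common neighbour of $u,v$ lies inside $R_0$. Applying the hypothesis $2\notin A$ to the pair $(w_1,w_2)$, which already has $u,v$ among its common neighbours, forces a third common neighbour $z\notin\{u,v\}$. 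The crucial planarity step is to show $z\in\operatorname{int}(R_0)$: the $4$-cycle $u w_1 v w_2$ is a Jordan curve, and within the embedded $K(2,a)$ the only face incident to both $w_1$ and $w_2$ is $R_0$, so a vertex adjacent to both of these opposite corners cannot sit on the far side without crossing an edge of $K(2,a)$. This yields the good configuration $(w_1,w_2;u,v;R_0;z)$.

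To shrink a good configuration $(x,y;p,q;R;r)$, I note that the edges $rx,ry$ split $R$ into sub-regions $R_p\ni p$ and $R_q\ni q$, bounded by the $4$-cycles $xryp$ and $xryq$. I then turn attention to the pair $(p,r)$: it has $x,y$ as common neighbours, so by $2\notin A$ it has a third common neighbour $s\notin\{x,y\}$. Since $r$ is interior to $R$, every neighbour of $r$ lies in $\overline{R}$, while $p$ is reachable within $\overline{R}$ only from $\overline{R_p}$; hence $s\in\operatorname{int}(R_p)$. This is again a good configuration, now on the $4$-cycle $xryp$ with opposite corners $p,r$, and its interior contains strictly fewer vertices because $r\in\operatorname{int}(R)\setminus\operatorname{int}(R_p)$. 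Iterating produces an infinite strictly decreasing sequence of interior-vertex counts, which is impossible in a finite graph; this contradiction completes the proof.

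I expect the main obstacle to be the two planar-location claims---that the seed vertex $z$ lands inside $R_0$, and that the descent vertex $s$ lands inside $R_p$---each of which rests on a careful Jordan-curve argument identifying which faces of the relevant embedded subgraph are incident to a prescribed pair of opposite corners. Once these are secured, the remaining bookkeeping (existence of at least two bounded lenses, strict decrease of the interior count, and termination) is routine.
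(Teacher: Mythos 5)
Your strategy (force a third common neighbour, trap it inside a region, descend on the number of interior vertices) is essentially the iteration in the paper's first step, but both of your planar-location claims have the same hole: the forced third common neighbour need not lie in the \emph{interior} of a face of the configuration drawn so far --- it can be a \emph{vertex of that configuration}, which sits on the boundary of several faces and can reach the two relevant vertices through two different faces. Concretely, in your seed step with $a=3$, the third common neighbour $z$ of $w_1,w_2$ can be $w_3$ itself: the edge $w_3w_1$ can be drawn in the lens between $w_3$ and $w_1$, and the edge $w_3w_2$ in the lens between $w_2$ and $w_3$, so planarity is not violated and yet $z\notin\operatorname{int}(R_0)$. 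Your Jordan-curve argument only constrains vertices lying in the interior of a face of the embedded $K(2,a)$; it says nothing about its own vertices. Likewise, in the shrink step the third common neighbour $s$ of the pair $(p,r)$ can be the fourth corner $q$: the edge $qr$ can be drawn through $R_q$ and the edge $qp$ outside the $4$-cycle $xpyq$ altogether, so $s\notin\operatorname{int}(R_p)$. Since the lemma must in particular cover $a=3$, and since the descent must be iterated at every step, neither of these cases can be dodged by a better initial choice.

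The gap is genuine rather than cosmetic because in each failure case no contradiction has been reached --- the configuration you land on is a square pyramid (wheel) subgraph: $z=w_3$ is adjacent to all of $u,w_1,v,w_2$, and $s=q$ is adjacent to all of $x,p,y,r$, and such a subgraph is perfectly planar and perfectly compatible with $2\notin A$ \emph{so far}. This is exactly the case the paper's proof is organised around: its iteration terminates precisely when the new common neighbour coincides with a known vertex, producing a square pyramid, and it then needs a separate, second argument to finish --- pair the apex with a base vertex to force a new common neighbour $y$ inside a triangular region, then show by planarity that the pair $(y,\text{opposite base vertex})$ forces two further adjacencies, after which the pair $(y,\text{remaining base vertex})$ cannot acquire a third common neighbour without creating a $K_5$. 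Your proof is missing precisely that second argument, so as written the induction simply halts without a contradiction and the proof does not go through.
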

\begin{proof}
Let $a\geq 3$ such that $a\in A$, and suppose by contradiction that $2\not\in A$. We claim that $G$ contains a subgraph isomorphic to the square pyramid. By assumption, there exist distinct $u,v\in V(G)$ such that
\[N(u,v)\supseteq\{w_1,w_2,w_3\}.\]
Since $2\not\in A$, then $w_1,w_2$ have a common neighbour $x\not\in\{u,v\}$, as in Figure \ref{fig:no2a}. If $x=w_3$ then $G$ contains a square pyramid of apex $w_3$ and base $[u,w_1,v,w_2]$ and the claim is proven. Otherwise, we introduce the labels
\[u'=w_1, \quad v'=w_2, \quad w_1'=u, \quad w_2'=x,\quad w_3'=v.\]
Since $2\not\in A$, then $w_1',w_2'$ have a common neighbour $x'\not\in\{u',v'\}$. If $x'=w_3'$ the claim is proven, otherwise we continue in the same fashion, so that by finiteness, $G$ indeed contains a square pyramid as a subgraph.
\begin{figure}[ht]
	\centering
	\begin{subfigure}{0.48\textwidth}
		\centering
		\includegraphics[width=4.5cm]{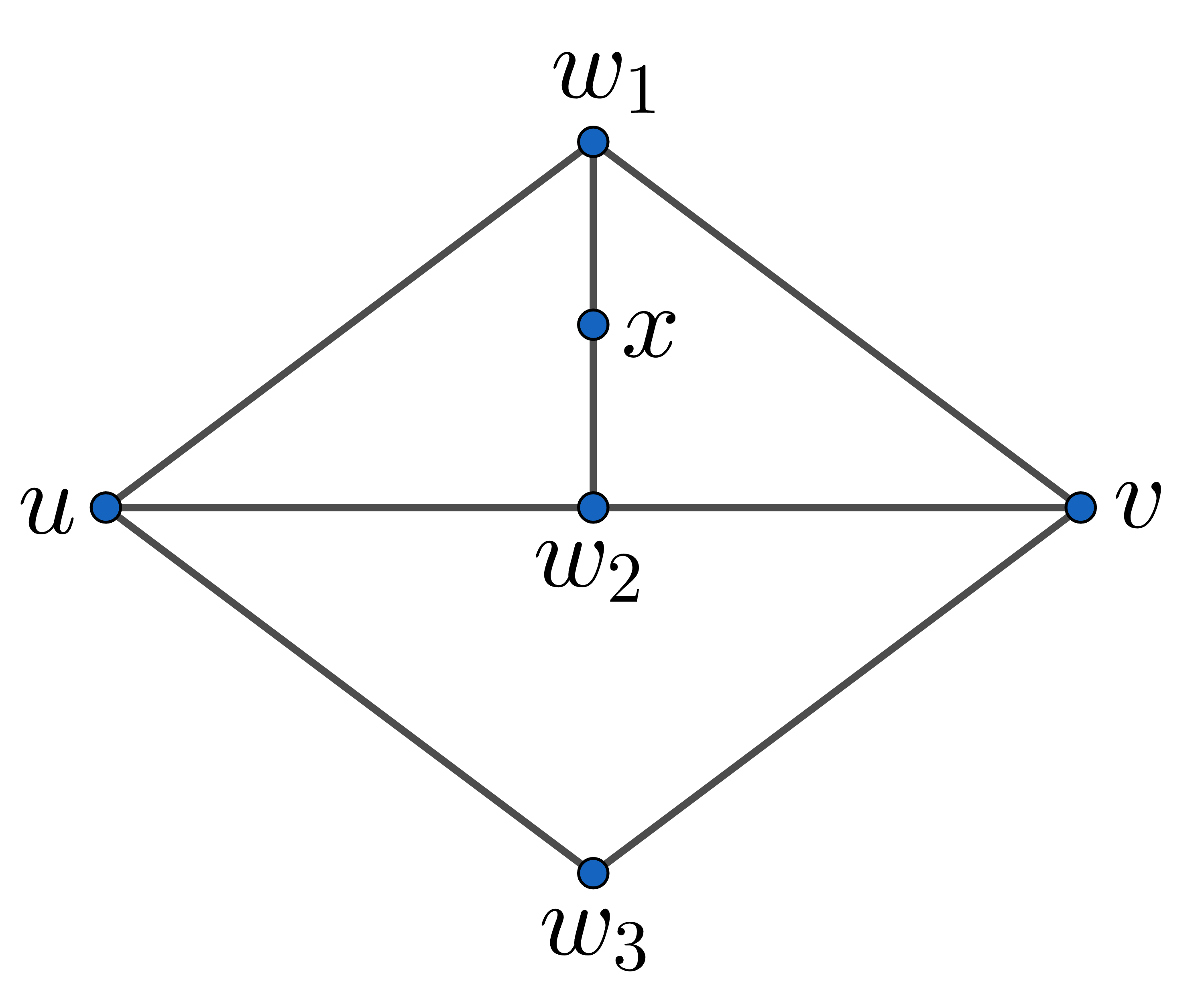}
		\caption{}
		\label{fig:no2a}
	\end{subfigure}
	\hspace{-1.cm}
	\begin{subfigure}{0.48\textwidth}
		\centering
		\includegraphics[width=3.75cm]{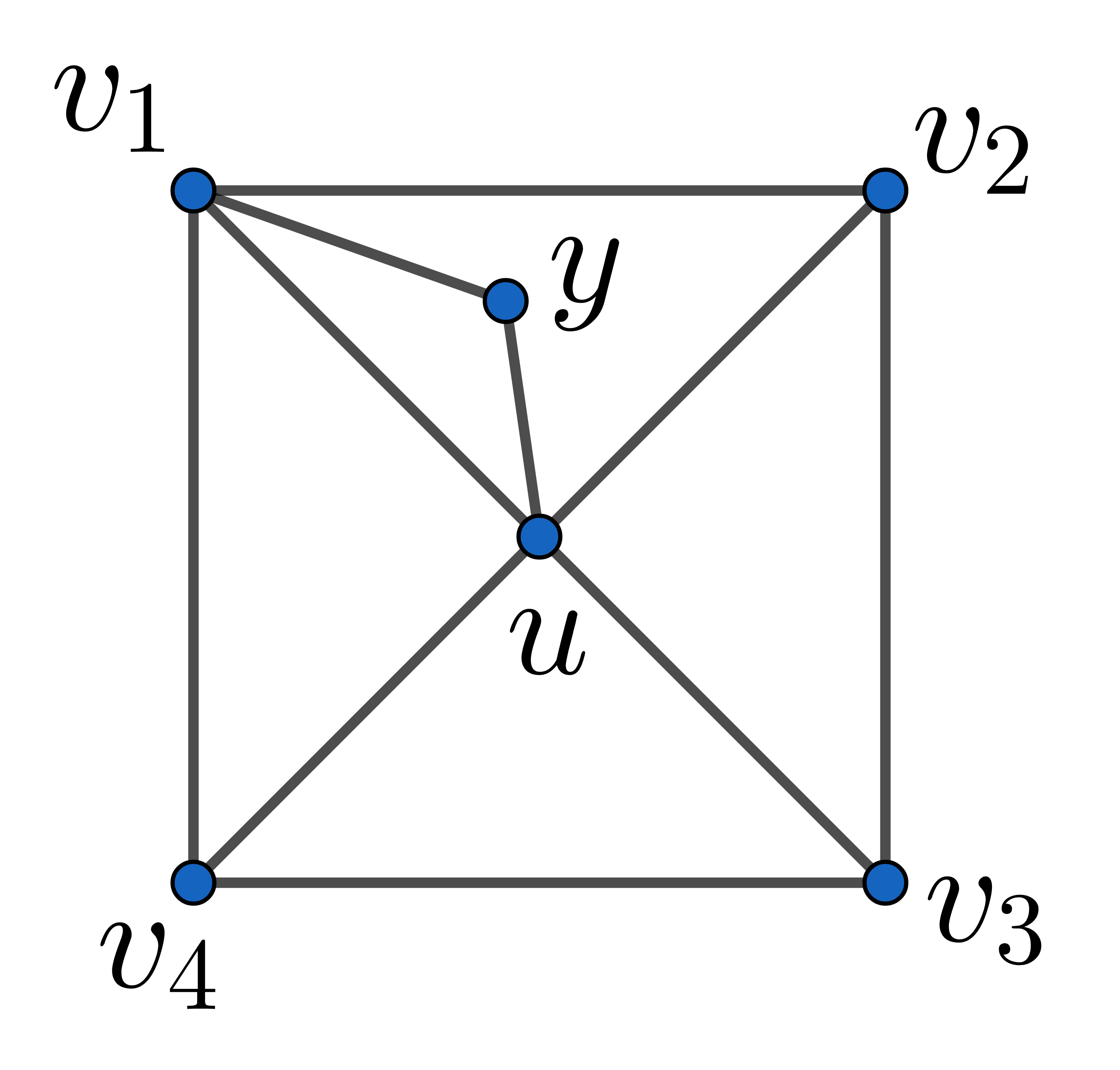}
		\caption{}
		\label{fig:no2b}
	\end{subfigure}
	\caption{Lemma \ref{le:yes2}.}
	\label{fig:no2}
\end{figure}

Take a subgraph of $G$ isomorphic to the square pyramid, with apex $u$ and base $[v_1,v_2,v_3,v_4]$, as in Figure \ref{fig:no2b}. Now $u,v_1$ have a common neighbour $y\not\in\{v_2,v_4\}$, that we may draw w.l.o.g.\ inside of the cycle $u,v_1,v_2$. Next, $y,v_4$ have a common neighbour other than $u,v_1$, and by planarity this common neighbour is $v_2$. Finally, $y,v_3$ have a common neighbour other than $u,v_2$, and by planarity this is impossible.
\end{proof}

Note that in Lemma \ref{le:yes2}, planarity is an essential condition e.g., for $n\geq 2$ the complete graphs satisfy $A(K_n)=\{n-2\}$.

\begin{lemma}
\label{le:2}
Let $G$ be a graph. If $G$ contains a subgraph isomorphic to $K(2,n)$, $n\geq 2$, then there exists $a\geq n$ such that $a\in A(G)$.
\end{lemma}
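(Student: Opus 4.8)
The plan is to read off the conclusion directly from the definitions of $K(2,n)$ and of $A(G)$ in \eqref{eq:A}; no construction or case analysis is needed. First I would name the vertices of the embedded copy of $K(2,n)$: let $u,v$ be the two vertices forming the part of size $2$, and let $w_1,w_2,\dots,w_n$ be the $n$ vertices forming the other part. Since the two parts of a bipartite graph are disjoint and the isomorphism carries every edge of $K(2,n)$ to an edge of $G$, the vertices $u$ and $v$ are distinct, and each $w_i$ is adjacent in $G$ to both $u$ and $v$.

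The key step is then the observation that $\{w_1,\dots,w_n\}\subseteq N(u,v)$, whence $|N(u,v)|\geq n$. Setting $a:=|N(u,v)|$, we have $a\geq n$, and by the definition \eqref{eq:A} of $A(G)$ the integer $a$ belongs to $A(G)$. This is exactly the assertion.

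There is no genuine obstacle here: the statement is an immediate consequence of the definitions, and the only points requiring (minimal) care are that $u\neq v$ — guaranteed because they lie in distinct parts of the bipartition — and that the $w_i$ are bona fide common neighbours rather than merely vertices incident to the subgraph. I would also remark that the lemma does \emph{not} claim $n\in A(G)$: the number of common neighbours of $u$ and $v$ in the ambient graph $G$ may strictly exceed $n$ if $G$ contains further vertices adjacent to both, which is precisely why the conclusion is phrased as the existence of some $a\geq n$ with $a\in A(G)$.
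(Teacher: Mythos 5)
Your proof is correct and follows exactly the paper's argument: identify the two-vertex part $u,v$ and the $n$ vertices $w_1,\dots,w_n$ of the embedded $K(2,n)$, note $N(u,v)\supseteq\{w_1,\dots,w_n\}$, and set $a:=|N(u,v)|\geq n$ so that $a\in A(G)$ by definition \eqref{eq:A}. The additional remarks (that $u\neq v$ and that only some $a\geq n$, not $n$ itself, is claimed) are sound clarifications of the same one-line argument.
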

\begin{proof}
Since $G$ contains a subgraph isomorphic to $K(2,n)$, we may find $u,v,w_1,w_2,\dots,w_n\in V(G)$ such that $N(u,v)\supseteq\{w_1,w_2,\dots,w_n\}$. Then $a:=|N(u,v)|\geq n$, as claimed.
\end{proof}

Combining Lemmas \ref{le:yes2} and \ref{le:2} we deduce the following.
\begin{cor}
	\label{cor:2}
	Let $G$ be a planar graph. We have $2\in A$ if and only if $G$ contains a $4$-cycle.
\end{cor}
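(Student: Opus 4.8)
The plan is to prove the two implications of the biconditional separately, observing that one direction is immediate from the definition \eqref{eq:A} of $A$, while the other is precisely the content that Lemmas \ref{le:yes2} and \ref{le:2} were set up to provide.

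For the forward direction, I would suppose $2\in A$. By definition there exist distinct vertices $u,v$ with $|N(u,v)|=2$, say $N(u,v)=\{w_1,w_2\}$. Each $w_i$ is adjacent to both $u$ and $v$; moreover the $w_i$ are distinct from one another and from $u,v$, since a common neighbour cannot coincide with $u$ or $v$ in a loopless graph. Hence $u,w_1,v,w_2$ is a $4$-cycle in $G$. This direction uses neither planarity nor the lemmas.

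For the reverse direction, I would suppose $G$ contains a $4$-cycle $w_1,w_2,w_3,w_4$. Such a cycle is exactly a copy of $K(2,2)$, with $w_1,w_3$ both adjacent to the two vertices $w_2,w_4$. Applying Lemma \ref{le:2} with $n=2$ then yields some $a\geq 2$ with $a\in A(G)$. If $a=2$ the conclusion holds directly; otherwise $a\geq 3$, and here I would invoke Lemma \ref{le:yes2}, which uses the planarity of $G$ to guarantee $2\in A$ regardless.

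I expect the only delicate point — rather than a genuine obstacle — to be the case $a\geq 3$ in the reverse direction: a $4$-cycle alone only forces some value $a\geq 2$ into $A$, and it is Lemma \ref{le:yes2}, together with the hypothesis of planarity, that sharpens this to the exact conclusion $2\in A$. This is also where planarity is essential, as the complete graphs (with $A(K_n)=\{n-2\}$) show that the statement would otherwise fail.
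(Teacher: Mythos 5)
Your proposal is correct and follows essentially the same route as the paper: the forward direction reads off the $4$-cycle $u,w_1,v,w_2$ from two common neighbours, and the reverse direction views the $4$-cycle as a copy of $K(2,2)$, applies Lemma \ref{le:2} to get some $a\geq 2$ in $A$, and then uses Lemma \ref{le:yes2} (where planarity enters) to conclude $2\in A$. Your explicit case split on $a=2$ versus $a\geq 3$ is in fact slightly more careful than the paper's phrasing, which invokes Lemma \ref{le:yes2} without separating the trivial case.
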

\begin{proof}
If $2\in A$, then we may find $u,v,w_1,w_2\in V(G)$ such that $N(u,v)\supseteq\{w_1,w_2\}$, thus $G$ contains the $4$-cycle $u,w_1,v,w_2$. Conversely, if $G$ contains a $4$-cycle, then there exists $n\geq 2$ such that $G$ contains a subgraph isomorphic to $K(2,n)$. By Lemma \ref{le:2}, there exists $a\geq n\geq 2$ such that $a\in A(G)$. By Lemma \ref{le:yes2}, $2\in A(G)$.
\end{proof}

\begin{lemma}
	\label{le:0}
Let $G$ be a connected graph. If $\diam(G)\geq 3$, then $0\in A$.
\end{lemma}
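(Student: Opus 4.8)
The plan is to exhibit an explicit pair of vertices with no common neighbour, which immediately places $0$ in $A$. The only fact I need is the elementary relationship between common neighbours and graph distance: if two vertices share a neighbour, they are at distance at most $2$.

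First I would invoke the definition of the diameter to produce a witnessing pair. Since $\diam(G)\geq 3$ and $G$ is connected (so all distances are finite and well-defined), there exist distinct vertices $u,v\in V(G)$ with $d(u,v)=\diam(G)\geq 3$. Next I would argue that such a pair can have no common neighbour. Indeed, suppose for contradiction that some $w\in N(u,v)$. Then $wu,wv\in E(G)$, so $u,w,v$ is a walk of length $2$ joining $u$ and $v$, whence $d(u,v)\leq d(u,w)+d(w,v)=1+1=2$, contradicting $d(u,v)\geq 3$.

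Therefore $N(u,v)=\emptyset$, i.e.\ $|N(u,v)|=0$, and by the definition \eqref{eq:A} of $A$ this gives $0\in A$, as required. There is essentially no hard step here: the entire content is the observation that a common neighbour forces distance at most $2$, so a pair realising a diameter of at least $3$ is automatically a pair with empty common-neighbour set. If anything warrants a word of care, it is only the implicit convention that the pairs in \eqref{eq:A} are distinct vertices, which is guaranteed here since $d(u,v)\geq 3>0$ forces $u\neq v$.
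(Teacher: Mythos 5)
Your proof is correct and is essentially the paper's own argument: the paper also picks a pair of vertices at distance at least $3$ and notes that $N(u,v)=\emptyset$, with your version merely spelling out the elementary fact that a common neighbour would force $d(u,v)\leq 2$. No gap and no meaningful difference in approach.
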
	
\begin{proof}
If $\diam(G)\geq 3$, then there exist vertices $u,v$ at distance $3$, thus $N(u,v)=\emptyset$.
\end{proof}


\begin{lemma}
	\label{le:no0}
	Let $G$ be a planar, connected graph on $p$ vertices with $\rad(G)\neq 1$ and $0\not\in A$. Then
	\[p\leq 4M^2+3M+2,\]
	where $M=\max\{a: a\in A\}$.
\end{lemma}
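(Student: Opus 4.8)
The plan is to first reduce to a rigid metric situation and then bound the order by controlling a maximum-degree neighbourhood together with its second neighbourhood. Since $0\notin A$, every pair of distinct vertices has a common neighbour, so no two vertices can be at distance $\ge 3$; by the contrapositive of Lemma \ref{le:0} this gives $\diam(G)\le 2$. As $\rad(G)\neq 1$, the graph is not complete, so $\diam(G)=2$, and using $\rad\le\diam\le 2\rad$ we also get $\rad(G)=2$; equivalently, $G$ has no dominating vertex. I would then record the local consequences of $0\notin A$ combined with $M=\max A$: every edge lies in a triangle, and for each vertex $x$ the induced graph $G[N(x)]$ has maximum degree $\le M$ (because $|N(x,y)|\le M$ for every neighbour $y$ of $x$) and minimum degree $\ge 1$; moreover $G[N(x)]$ is outerplanar, since $x$ is joined to all of $N(x)$ inside the planar graph $G$.

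Fix a vertex $u$ of maximum degree $\Delta=\Delta(G)$. Because $\diam(G)=2$, the vertex set splits as $V=\{u\}\sqcup N(u)\sqcup N_2(u)$, where $N_2(u)$ denotes the vertices at distance $2$ from $u$, so $p=1+\Delta+|N_2(u)|$. Maximality of $\deg u$ forces $\deg x\le\Delta$ for every $x$, which will control $|N_2(u)|$. The strategy is to bound $\Delta$ linearly in $M$ and $|N_2(u)|$ quadratically in $M$.

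The crux is the claim $\Delta\le 2M+1$, which I would prove by contradiction, assuming $\Delta\ge 2M+2$ and exploiting planarity in the style of the enclosure arguments of Section \ref{sec:no1}. Choose a planar embedding, list $N(u)=\{v_1,\dots,v_\Delta\}$ in cyclic order around $u$, and pick $w\in N_2(u)$ (which exists, as there is no dominating vertex). Write $S_w=N(u)\cap N(w)$ with $1\le|S_w|\le M$. The edges from $u$ and from $w$ to $S_w$ form a planar $K(2,|S_w|)$ whose bounded faces are $4$-cycles $u\,v_a\,w\,v_b$ with $v_a,v_b$ consecutive in $S_w$ around $u$. The $\Delta-|S_w|\ge M+2$ neighbours of $u$ not in $S_w$ are distributed among the at most $|S_w|\le M$ angular sectors cut out by $S_w$, and are trapped inside the corresponding lens regions; by pigeonhole some lens contains at least two of them. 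I then expect to show that the innermost such trapped neighbour of $u$ cannot share a common neighbour with $w$, its only candidate common neighbours being confined to the closed lens and cut off by the edges $wv_a,wv_b$; this contradicts $0\notin A$. Making this trapping rigorous—in particular ruling out interior vertices of $N_2(u)$ that might a priori supply the missing common neighbour, and handling the degenerate case $|S_w|=1$—is the main obstacle, and is where the bulk of the planar case analysis will be spent.

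Granting $\Delta\le 2M+1$, the second neighbourhood is bounded by a short degree count. Each $w\in N_2(u)$ is adjacent to at least one vertex of $N(u)$, so $|N_2(u)|\le\sum_{a\in N(u)}|N(a)\cap N_2(u)|$. For $a\in N(u)$ one has $\deg a=1+|N(a)\cap N(u)|+|N(a)\cap N_2(u)|$ with $|N(a)\cap N(u)|\ge 1$ (minimum degree $\ge 1$ in $G[N(u)]$), hence $|N(a)\cap N_2(u)|\le\deg a-2\le\Delta-2\le 2M-1$. Summing over the at most $2M+1$ vertices of $N(u)$ gives $|N_2(u)|\le(2M+1)(2M-1)=4M^2-1$, so $p=1+\Delta+|N_2(u)|\le 1+(2M+1)+(4M^2-1)=4M^2+2M+1\le 4M^2+3M+2$. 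The slack between $4M^2+2M+1$ and the stated bound shows the argument is robust to minor changes in the constant of the $\Delta$-bound, so establishing the linear bound $\Delta\le 2M+1$ is both the decisive and the hardest step.
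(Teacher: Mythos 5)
Your proof has a genuine gap, and it cannot be repaired: the claim you yourself identify as the crux, $\Delta(G)\le 2M+1$, is false under the hypotheses of the lemma. Fix $M\ge 5$ and consider the $M$-gonal bipyramid with hub $u$, cycle $a_1,\dots,a_M$ and apex $w$ (so both $u$ and $w$ are adjacent to every $a_j$, while $uw\notin E(G)$), and inside each triangular face $[u,a_j,a_{j+1}]$ insert $M-2$ further vertices, each joined to exactly $u$ and $a_j$. The resulting graph is planar and connected, has no dominating vertex (so $\rad(G)=2$), and every pair of vertices has a common neighbour: pairs among $\{a_1,\dots,a_M\}$ and pairs of inserted vertices have $u$; the pair $(w,a_j)$ has $a_{j-1}$ and $a_{j+1}$; the pair $(u,w)$ has every $a_j$; and any pair involving an inserted vertex $v$ of the face $[u,a_j,a_{j+1}]$ has $u$ or $a_j$ as a common neighbour. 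Moreover $|N(u,w)|=M$, $|N(u,a_j)|=|\{a_{j-1},a_{j+1}\}|+(M-2)=M$, and every remaining pair has at most three common neighbours, so $0\notin A$ and $M=\max A$. Yet $\deg(u)=M+M(M-2)=M^2-M>2M+1$. So the maximum degree is genuinely quadratic in $M$, and your lens argument must fail; concretely, the trapped neighbours of $u$ inside a lens $u,a_j,w,a_{j+1}$ \emph{do} have a common neighbour with $w$, namely the corner $a_j$ of the lens, and a single corner can serve up to roughly $M$ trapped vertices, so that with up to $M$ lenses a degree of order $M^2$ arises. Note also that even a corrected quadratic bound on $\Delta$ would not salvage your final count: $|N_2(u)|\le\sum_{a\in N(u)}(\deg a-2)\le\Delta(\Delta-2)$ would then only yield $p=O(M^4)$, far weaker than $4M^2+3M+2$.

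The paper's proof takes a structurally different route that is immune to this phenomenon. It invokes the theorem of Goddard and Henning that a planar graph of diameter $2$ (with one explicit exception) has domination number at most $2$, fixes such a dominating pair $\{x,y\}$, and bounds by $2M+1$ the degrees only of the vertices in $V_x\cup V_y$: every neighbour of such a vertex $z$ is $x$, or a common neighbour of $z$ and $x$, or a common neighbour of $z$ and $y$. The two dominating vertices themselves are allowed quadratic degree. It then bounds $|V_y|\le M+2M\cdot M$ by counting, from one vertex $z\in V_x$, its neighbours in $V_y$ and the neighbours-of-neighbours in $V_y$, and symmetrically for $|V_x|$. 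In the counterexample above, $\{u,w\}$ is precisely the dominating pair, and only its two members have large degree --- which is why the paper's decomposition goes through while a maximum-degree-based decomposition cannot.
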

\begin{proof}
	Since $0\not\in A$, by Lemma \ref{le:0} every pair of vertices is at distance at most $2$, hence $\rad(G)=\diam(G)=2$. By \cite[Theorem 2]{goddard2002domination}, in a planar graph of diameter $2$ different from the one in Figure \ref{fig:exc}, there exist two vertices such that every other vertex is adjacent to at least one of them (i.e., its domination number is at most $2$). Hence we may take $x,y\in V(G)$ such that every other element in $V(G)$ is adjacent to at least one of $x,y$.
	\begin{figure}[ht]
		\centering
		\includegraphics[width=2.5cm]{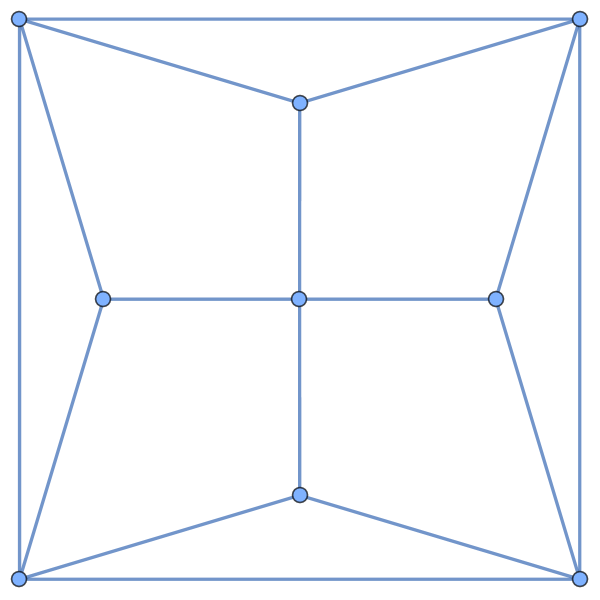}
		\caption{The only planar graph with diameter $2$ and domination number $3$.}
		\label{fig:exc}
	\end{figure}
	
	We write the disjoint union
	\[V(G)=\{x,y\}\cup N(x,y)\cup V_x\cup V_y,\]
	where $V_x$ is the set of vertices $\neq x,y$ adjacent to $x$ but not $y$, and $V_y$ the set of vertices $\neq x,y$ adjacent to $y$ but not $x$. As $\rad(G)\neq 1$, then $V_x,V_y\neq\emptyset$. We wish to give an upper bound for $V_y$.
	
	Since $M=\max\{a: a\in A\}$, each element of $V_x$ and $V_y$ is adjacent to at most $M$ elements of $N(x)$ and at most $M$ elements of $N(y)$, hence in particular its degree in $G$ is at most $2M+1$. For $u\in V(G)$ and $i\geq 1$, we denote by $N_i(u)$ the set of vertices at distance $i$ from $u$ (in particular, $N_1(u)=N(u)$). Letting $z\in V_x$, since $\diam(G)=2$ we have the disjoint union
	\[V(G)=\{z\}\cup N_1(z)\cup N_2(z).\] Since for every $u\in V(G)$ one has
	\[|N_1(u)\cap V_y|\leq|N(u,y)|\leq M,\]
	it follows that
	\[|V_y|=|N_1(z)\cap V_y|+|N_2(z)\cap V_y|\leq M+\sum_{u\in N_1(z)\setminus \{x\}}|N_1(u)\cap V_y|\leq M+2M\cdot M=2M^2+M.\]
	Similarly, $|V_x|\leq 2M^2+M$. Therefore,
	\[p=2+|N(x,y)|+|V_x|+|V_y|\leq 2+M+(2M^2+M)+(2M^2+M)=4M^2+3M+2,\]
	as claimed.
\end{proof}

\begin{lemma}
	\label{le:1}
	If $G$ is a polyhedron, then $A\neq\{1\}$.
\end{lemma}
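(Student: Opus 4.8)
The plan is to assume for contradiction that $A(G)=\{1\}$ and extract a contradiction with $3$-connectivity and planarity. First I would record the two free consequences of the hypothesis: $0\notin A$ and $2\notin A$. By Corollary \ref{cor:2} the condition $2\notin A$ means that $G$ contains no $4$-cycle, while $0\notin A$ is exactly what is needed to invoke Lemma \ref{le:no0}. I would then split the argument according to whether $G$ has a dominating vertex, i.e.\ according to whether $\rad(G)=1$.

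The case $\rad(G)=1$ is handled directly. If $v$ is a dominating vertex, then every other vertex $u$ is adjacent to $v$, and any neighbour $w\neq v$ of $u$ is also adjacent to $v$ (since $v$ dominates), so $w$ is a common neighbour of $u$ and $v$. As $A=\{1\}$ permits only a single common neighbour, $u$ can have at most one neighbour besides $v$; hence $\deg(u)\leq 2$, contradicting the minimum degree $3$ forced by $3$-connectivity.

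In the case $\rad(G)\neq 1$, Lemma \ref{le:no0} applies with $M=\max A=1$ and gives the upper bound $p\leq 4+3+2=9$. To contradict this I would establish a far larger lower bound on $p$. The key observation is a double count: since $A=\{1\}$, two neighbours of a vertex $v$ cannot share a common neighbour other than $v$, so $N(v)$ induces a matching; writing $k_v$ for the number of its edges (equivalently the number of triangles through $v$) we have $k_v\leq\deg(v)/2$. Every edge lies in exactly one triangle, so $\sum_v k_v=3t=m$ where $t$ is the number of triangles, while $\sum_v\deg(v)=2m$; thus $\sum_v(\deg(v)-2k_v)=0$ with all summands nonnegative, forcing $\deg(v)=2k_v$ for every $v$. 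In particular all degrees are even, so $3$-connectivity upgrades the minimum degree from $3$ to $4$ and yields $m\geq 2n$. On the other hand, the absence of $4$-cycles means no face of the embedding has length $4$ (faces of a polyhedron are bounded by cycles), and no edge lies on two triangular faces (two such would form a $4$-cycle), so with $f_3$ triangular faces one has $3f_3\leq m$ and $2m$, being the sum of all face lengths, is at least $5f-2f_3\geq 5f-\tfrac{2m}{3}$. Substituting Euler's relation $f=m-n+2$ gives $m\leq\tfrac{15}{7}(n-2)$, which combined with $m\geq 2n$ forces $n\geq 30$, contradicting $p\leq 9$. Therefore no polyhedron satisfies $A=\{1\}$.

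The main obstacle I anticipate is the lower bound in the last case: the bound $m\leq\tfrac{15}{7}(n-2)$ coming from $C_4$-freeness is only useful against Lemma \ref{le:no0} once the minimum degree is known to be $4$, since $3$-connectivity alone (minimum degree $3$) yields merely $n\geq 7$, which does not contradict $p\leq 9$. Thus the crux is the parity argument showing that $A=\{1\}$ makes every neighbourhood a perfect matching and hence every degree even; everything else is routine bookkeeping with Euler's formula.
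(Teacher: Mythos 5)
Your proof is correct, but it takes a genuinely different route from the paper's. Both arguments begin with the same key observation: $A=\{1\}$ forces each neighbourhood $N(v)$ to induce a matching (you then obtain perfect matchings and even degrees via a global double count over triangles, using that every edge lies in exactly one triangle; the paper argues the same conclusion locally, pairing each vertex's neighbours into triangles), and the absence of $4$-cycles (your Corollary \ref{cor:2}) rules out quadrilateral faces. The divergence is in the endgame. The paper proves an additional structural claim --- every triangle through a vertex is \emph{facial}, via a planarity/$3$-connectivity argument about a hypothetical vertex inside a triangle --- which yields the exact count $f_3=f/2$ and then an outright contradiction from the two handshake inequalities $2q\geq 4f$ and $2q\geq 4p$ together with Euler's formula; no bound on the order is ever needed and there is no case split. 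You instead settle for the weaker inequality $3f_3\leq q$ (triangular faces are pairwise edge-disjoint), which, as you correctly note, gives no contradiction on its own: you close the gap by splitting on the radius, killing $\rad(G)=1$ immediately against minimum degree $3$, and for $\rad(G)\geq 2$ invoking Lemma \ref{le:no0} with $M=1$ to cap the order at $9$, against your counting bound of at least $30$ (your arithmetic here checks out). What each approach buys: the paper's proof is self-contained (Euler plus handshaking, both primal and dual) but requires the somewhat delicate facial-triangle claim; yours avoids that claim entirely, at the price of a case analysis and of leaning on Lemma \ref{le:no0}, which itself rests on an external domination theorem for planar diameter-$2$ graphs. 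There is no circularity in your citations, since Corollary \ref{cor:2}, Lemma \ref{le:0} and Lemma \ref{le:no0} all precede Lemma \ref{le:1} and are independent of it.
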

\begin{proof}
	By contradiction, assume that $G$ is a $(p,q)$-polyhedral graph satisfying $A=\{1\}$. Let $u\in V(G)$ have neighbours $N(u)=\{v_1,v_2,\dots,v_n\}$. Now $u$ has a unique common neighbour with $v_1$, and this neighbour must be one of $v_2,\dots,v_n$, say $v_2$. Thus $u,v_1,v_2$ form a triangle. Similarly, $u,v_3,v_4$ form a triangle and so forth. Therefore, the plane neighbourhood $\Gamma_u(G)$ \eqref{eq:plne} of each vertex $u$ includes $\deg_G(u)/2$ triangles with the common vertex $u$, and $\deg_G(u)/2$ non-triangular faces containing $u$. In particular, every vertex has even degree.
	
	We claim that these triangles are all facial. Indeed, by contradiction, let $x$ be inside the cycle $u,v_1,v_2$. Now $x$ cannot be adjacent to two or more of $u,v_1,v_2$, else $G$ would contain a $4$-cycle contradicting Lemma \ref{le:2}. On the other hand, $|N(x,v_3)|=1$, while $v_1,v_2$ are not adjacent to $v_3$, hence by planarity $N(x,v_3)=\{u\}$. Thereby, none of the vertices inside the cycle $u,v_1,v_2$ are adjacent to $v_1,v_2$, contradicting $3$-connectivity of $G$. Hence indeed for each vertex $u$ of $G$, its neighbourhood includes $\deg_G(u)/2$ facial triangles with the common vertex $u$.
	
	In particular, for each vertex exactly half of the faces that it lies on are triangular. It follows that $f_3=f/2$, where $f$ is the total number of faces in $G$ and $f_i$ the number of faces of length $i$ for $i\geq 3$. Now by the handshaking lemma applied to the dual of $G$, and since $G$ does not contain any $4$-cycles, one has
	\begin{equation}
		\label{eq:1a}
		2q=\sum_{i\geq 3}if_i=3f_3+\sum_{i\geq 5}if_i\geq \frac{3f}{2}+5\sum_{i\geq 5}f_i=\frac{3f}{2}+5(f-f_3)=\frac{3f}{2}+\frac{5f}{2}=4f.
	\end{equation}
	
	On the other hand, since each vertex of $G$ has even degree, by the handshaking lemma applied to $G$ we obtain $2q\geq 4p$, and combining with \eqref{eq:1a} and Euler's formula for planar graphs one has
	\[4p+4r-8=4q=2q+2q\geq 4p+4r,\]
	contradiction.
\end{proof}
The statement of Lemma \ref{le:1} does not hold for planar, connected graphs in general. For instance, a graph given by any number of triangles with a common vertex satisfies $A=\{1\}$.

We are ready to characterise polyhedra satisfying $A=\{0,1\}$.
\begin{cor}
	\label{cor:01}
	Let $G$ be a polyhedron. Then we have $A=\{0,1\}$ if and only if $G$ does not contain any $4$-cycles.
\end{cor}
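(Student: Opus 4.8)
The plan is to assemble the corollary from the three facts already in hand, treating the two directions separately. For the forward implication I would simply contrapose through Corollary \ref{cor:2}: if $A=\{0,1\}$ then $2\notin A$, and since $G$ is planar, Corollary \ref{cor:2} says $2\in A$ is equivalent to the presence of a $4$-cycle, so $G$ is $4$-cycle-free. This direction is immediate and needs no polyhedral structure beyond planarity.

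For the converse, suppose $G$ has no $4$-cycle. The first step is to pin down $A\subseteq\{0,1\}$. By Corollary \ref{cor:2} the absence of a $4$-cycle gives $2\notin A$, and by Lemma \ref{le:yes2} any $a\geq 3$ in $A$ would force $2\in A$ as well; hence no integer $\geq 2$ lies in $A$, so $A\subseteq\{0,1\}$. The second step is to show $1\in A$: since $G$ is a polyhedron, it is $3$-connected, so every vertex has degree at least $3$. Choosing any vertex $w$ and two of its neighbours $u,v$ gives $w\in N(u,v)$, so $|N(u,v)|\geq 1$; as we have already shown $A\subseteq\{0,1\}$, this count equals exactly $1$, and therefore $1\in A$.

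It remains to establish $0\in A$, and here I would deliberately avoid exhibiting an explicit pair with no common neighbour. The combined information so far is that $A$ is either $\{1\}$ or $\{0,1\}$, and Lemma \ref{le:1} states precisely that a polyhedron cannot have $A=\{1\}$. Eliminating that case leaves $A=\{0,1\}$, completing the proof.

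The only genuine subtlety is this last point: a direct argument for $0\in A$ would require locating either two vertices at distance $\geq 3$ or an edge lying on no triangle, which is fiddly to carry out in general. Routing it instead through Lemma \ref{le:1}, which already does the structural (Euler-formula) work of forbidding $A=\{1\}$, is what keeps the proof short; the only new observation needed beyond the cited results is the elementary degree-$\geq 3$ remark that yields $1\in A$.
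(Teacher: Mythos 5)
Your proof is correct and takes essentially the same route as the paper's: the forward direction goes through Corollary \ref{cor:2}, and the converse reduces to $A\subseteq\{0,1\}$ and then invokes Lemma \ref{le:1} to exclude $A=\{1\}$. The only harmless deviations are that you route the exclusion of values $a\geq 3$ through Lemma \ref{le:yes2} where the paper cites Lemma \ref{le:2}, and that you explicitly verify $1\in A$ via the minimum-degree-$3$ observation (thereby ruling out $A=\{0\}$ and $A=\emptyset$), a point the paper leaves implicit when concluding $A=\{0,1\}$.
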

\begin{proof}
	Let $A=\{0,1\}$. Then $2\not\in A$, so that by Corollary \ref{cor:2}, $G$ does not contain any $4$-cycles. Conversely, suppose that $G$ does not contain any $4$-cycles. By Corollary \ref{cor:2}, $2\not\in A$. By Lemma \ref{le:2}, $A\subseteq\{0,1\}$. By Lemma \ref{le:1}, in fact $A=\{0,1\}$.
\end{proof}

\section{Polyhedra of graph radius $1$}
\label{sec:rad1}

In the present and in the following section, we will begin to populate Table \ref{t:2}. We will need a few auxiliary results about polyhedra of graph radius $1$, that play a central role in our arguments.

\begin{lemma}
	\label{le:ecc}
	Given a polyhedron $G\not\simeq T_\ell$ of graph radius $1$, there is in $G$ a unique vertex of eccentricity $1$.
\end{lemma}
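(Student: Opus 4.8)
The plan is to prove the equivalent statement that a radius-$1$ polyhedron with \emph{two} distinct vertices of eccentricity $1$ must be some $T_\ell$. A vertex has eccentricity $1$ exactly when it is dominating (adjacent to all other vertices), and $\rad(G)=1$ guarantees that at least one such vertex exists; so once we show that having a second one forces $G\simeq T_\ell$, the uniqueness claimed for $G\not\simeq T_\ell$ follows immediately.

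So I would start by assuming $u,w$ are two distinct dominating vertices. Then $u\sim w$, and setting $H:=G-\{u,w\}$, every vertex of $H$ is adjacent in $G$ to both $u$ and $w$. Consequently $G$ is exactly the join $H+K_2$, where $K_2$ is the edge $uw$: the edge set of $G$ consists of $E(H)$, all edges from $\{u,w\}$ to $V(H)$, and $uw$. It therefore suffices to prove $H\simeq P_\ell$ with $\ell=p-2$, since this gives $G=P_\ell+K_2=T_\ell$ (and $\ell\ge 2$ because a polyhedron has $p\ge 4$).

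I would then pin down the structure of $H$ in three steps. First, \emph{connectivity}: since $G$ is $3$-connected, deleting the two vertices $u,w$ leaves a connected graph, so $H$ is connected. Second, \emph{bounded degree}: if some $h\in V(H)$ had three neighbours $h_a,h_b,h_c$ inside $H$, then, using that $u$ and $w$ are adjacent to all of $V(H)$, the parts $\{u,w,h\}$ and $\{h_a,h_b,h_c\}$ would induce a $K(3,3)$ subgraph on six distinct vertices, contradicting planarity of $G$; hence $\Delta(H)\le 2$, so the connected graph $H$ is either a path or a cycle. Third, \emph{excluding the cycle}: if $H\simeq C_\ell$, then $G$ contains the $\ell$-gonal bipyramid with apexes $u,w$ and equatorial cycle $C_\ell$, together with the extra edge $uw$; choosing any three equatorial vertices and the three internally disjoint arcs of $C_\ell$ joining them yields, with the five edges incident to $u,w$ and the edge $uw$, a $K_5$-subdivision on $\{u,w\}$ and those three vertices, again contradicting planarity. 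Thus $H$ must be a path, and $G=T_\ell$.

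The step I expect to be the main obstacle is excluding the cycle, where I must make sure the edge $uw$ is genuinely forced to cross the equatorial cycle rather than be embeddable. The cleanest route is the $K_5$-subdivision described above, which relies only on planarity and sidesteps any embedding bookkeeping; alternatively one can invoke Whitney's uniqueness of the embedding to argue that $C_\ell$ separates the plane into two faces, one containing $u$ and the other $w$, so that $uw$ cannot be drawn. The degree bound is the other point requiring care, but the $K(3,3)$ there is transparent once one records that both $u$ and $w$ see all of $H$.
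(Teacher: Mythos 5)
Your proof is correct, but it takes a different route from the paper's. The paper splits into cases according to the number $i$ of eccentricity-$1$ vertices: for $i\geq 4$ it uses $K_5\not\subseteq G$ to force $G\simeq T_2$, for $i=3$ it uses $K(3,3)\not\subseteq G$ to force $G\simeq T_3$, and for $i=2$ it runs an edge count rather than a degree bound: with $H=G-u-v$ connected, $H$ has $q+3-2p$ edges, so connectivity gives $q\geq 3p-6$ while planarity gives $q\leq 3p-6$; hence $G$ is a triangulation and $H$ is a \emph{tree}, after which $K(3,3)$ excludes degree-$3$ vertices of $H$ and $H$ must be a path. You instead treat all cases $i\geq 2$ uniformly: two dominating vertices give the join decomposition $G=H+K_2$, the $K(3,3)$ obstruction bounds $\Delta(H)\leq 2$ directly (no counting needed), connectivity leaves path-or-cycle, and you kill the cycle case with a $K_5$-subdivision on $u,w$ and three equatorial vertices. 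Your argument is more elementary and self-contained -- it relies only on Kuratowski-type obstructions and avoids Euler-formula bookkeeping and the case analysis -- while the paper's counting argument yields extra structural byproducts along the way (for $i=2$, that $G$ is a triangulation and $H$ a tree, and the identification of exactly which $T_\ell$ occurs for each value of $i$). Both proofs ultimately rest on planarity via forbidden subgraphs, so the difference is one of mechanism rather than of underlying principle.
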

\begin{proof}	
	Denote by $i$ the number of vertices of eccentricity $1$ in a graph $G$. By definition a vertex of eccentricity $1$ is adjacent to all other vertices. If $i\geq 4$, since by planarity $K_5$ is not a subgraph of $G$, then $i=4$ and $|V(G)|-i=0$ so that $G$ is the tetrahedron $T_2$. If $i=3$, since by planarity $K(3,3)$ is not a subgraph of $G$, then $|V(G)|-i\leq 2$ so that $G$ is the triangular bipyramid $T_3$.
	
	Let $i=2$. We use the $3$-connectivity of $G$ to see that $H:=G-u-v$ is connected, where $u,v$ are the vertices of $G$ with eccentricity $1$. If $G$ is a $(p,q)$-graph, then $H$ has $p-2$ vertices and
	\[q-(p-1)-(p-2)=q+3-2p\]
	edges. On one hand, since $H$ is connected one has
	\[q+3-2p\geq (p-2)-1,\]
	whence $q\geq 3p-6$. On the other hand, since $G$ is planar, we have $q\leq 3p-6$, so that ultimately $G$ is a triangulation, and $H$ is a connected $(p-2,p-3)$-graph i.e., it is a tree. Furthermore, no vertex of $H$ may have degree $3$ in $H$, or $G$ would contain a subgraph isomorphic to $K(3,3)$. Hence $H$ is a path, thus $G\simeq T_\ell$ for some $\ell\geq 4$.
\end{proof}

Given a graph $G$ and $u\in V(G)$, recall the definition \eqref{eq:plne} of plane neighbourhood $\Gamma_u(G)$.
\begin{lemma}
	\label{le:33}
If $G$ is a polyhedron and $u$ a vertex of eccentricity $1$, then $\Gamma_u(G)$ is a pyramid. Moreover, $G$ has at least two vertices of degree $3$.
\end{lemma}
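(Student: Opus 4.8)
The plan is to exploit that a vertex of eccentricity $1$ is adjacent to every other vertex, and to read off the structure of $G$ from the graph $G-u$. First I would record that $u$ is a dominating vertex, so $N(u)=V(G)\setminus\{u\}$; write $v_1,v_2,\dots,v_n$ for its neighbours in rotation order in the (unique polyhedral) embedding, where $n=\deg(u)\geq 3$. The structural heart of the argument is the claim that $G-u$ is a $2$-connected outerplanar graph. The $2$-connectivity is immediate, since deleting one vertex from a $3$-connected graph leaves a $2$-connected graph. For outerplanarity I would argue that upon deleting $u$ together with its incident edges from the embedding, the faces of $G$ that contained $u$ merge into a single region $R$; as every vertex of $G-u$ is a neighbour of $u$, every vertex lies on $\partial R$, so $G-u$ has an embedding with all vertices on one face.

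Next I would invoke the classical fact that a $2$-connected outerplanar graph has an embedding whose outer face is bounded by a Hamiltonian cycle. Applying this with $R$ as outer face, and using that $G-u$ is $2$-connected so that $\partial R$ is a \emph{simple} cycle, the boundary of $R$ is a Hamiltonian cycle through $v_1,\dots,v_n$; since the cyclic order along $\partial R$ is the rotation at $u$, this cycle is exactly $v_1v_2\cdots v_nv_1$. Hence $v_iv_{i+1}\in E(G)$ for every $i$, every face of $G$ incident to $u$ is the triangle $[u,v_i,v_{i+1}]$, and therefore $\Gamma_u(G)$ is the wheel with hub $u$ and rim $v_1\cdots v_n$, i.e.\ a pyramid.

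For the second assertion I would use domination to conclude $V(G)=\{u,v_1,\dots,v_n\}$, and that, since the wheel already fills the disc bounded by the rim, every remaining edge of $G$ is a chord of the cycle $v_1\cdots v_n$ drawn outside it, the chords being pairwise non-crossing. Thus $\deg_G(v_i)=3+c_i$, where $c_i$ is the number of chords at $v_i$, so $v_i$ has degree $3$ exactly when it carries no chord. The rim cycle together with its outer chords is again a $2$-connected outerplanar graph, and I would show such a graph has at least two vertices of degree $2$ by an ear argument: if there are no chords then all $n\geq 3$ rim vertices have degree $2$; otherwise the inner faces cut off by the chords form a tree under shared-chord adjacency, this tree has at least two leaves, and each leaf face is bounded by one chord together with a rim arc containing at least one interior vertex of degree $2$, with distinct leaves giving distinct such vertices. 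Translating back, at least two of the $v_i$ carry no chord and hence have degree $3$ in $G$.

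The main obstacle I anticipate is cleanly justifying the two outerplanarity steps — that $G-u$ is outerplanar and that the rim-plus-chords subgraph inherits both the Hamiltonian-outer-cycle and the two-ears properties — since these depend on the interplay between $3$-connectivity, the uniqueness of the polyhedral embedding, and standard facts about $2$-connected outerplanar graphs; once these are in hand, the remainder is bookkeeping on degrees.
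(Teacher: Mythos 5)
Your proof is correct and follows the paper's strategy for the main claim: both arguments delete $u$, observe that $G-u$ is $2$-connected and outerplanar (every vertex lies on the single region created by removing $u$), and use Hamiltonicity of $2$-connected outerplanar graphs to conclude that $\Gamma_u(G)$ is a pyramid --- the paper simply cites this Hamiltonicity fact from an earlier work, whereas you re-derive the identification of the outer cycle with the rotation at $u$. The only divergence is in the second claim. The paper reconstructs $G$ from the pyramid by inserting the chords one at a time, noting that after the first insertion there is a degree-$3$ vertex strictly on each side of the chord, and asserting (rather tersely, ``by planarity'') that this invariant persists at every step. You instead argue statically: $G-u$ is the rim cycle plus pairwise non-crossing chords, the weak dual tree of the chord decomposition has at least two leaves, and each leaf face yields a chord-free rim vertex, i.e.\ a vertex of degree $2$ in $G-u$ and hence of degree $3$ in $G$. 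Both routes prove the same ``two ears'' property of $2$-connected outerplanar graphs (which the paper itself invokes as a known lemma in a later proof); your static version is more self-contained than the paper's one-line induction, at the cost of some extra bookkeeping about leaf faces and the disjointness of the degree-$2$ vertices they produce.
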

\begin{proof}
Let $u\in V(G)$ be a vertex of eccentricity $1$. We take
\[G'=G-u.\]
Since $G$ is a polyhedron, then $G'$ is planar and $2$-connected, and moreover by construction there exists a planar embedding of $G'$ such that all the vertices are on the external face. That is to say, $G'$ is outerplanar and $2$-connected. By \cite[Lemma 3.3]{maffucci2025regularity}, $G'$ is Hamiltonian. Therefore, the plane neighbourhood
\[\Gamma_u(G)\]
is a $|V(G')|$-gonal pyramid, as claimed.

Let
\[[v_1,v_2,\dots,v_n], \qquad n=|V(G')|\]
be the base of $\Gamma_u(G)$. To construct $G$ starting from $\Gamma_u(G)$, one inserts the remaining edges one at a time. One starts with a given $v_iv_j$, where
\[1\leq i<j\leq n \qquad\text{ and }\qquad 2\leq j-i\leq n-2.\]
In the graph $\Gamma_u(G)+v_iv_j$, there is a vertex of degree $3$ among
\begin{equation}
	\label{eq:am1}
v_{i+1},v_{i+2},\dots,v_{j-1},
\end{equation}
and another among
\begin{equation}
	\label{eq:am2}
v_{1},v_{2},\dots,v_{i-1},v_{j+1},v_{j+2},\dots,v_{n}.
\end{equation}
By planarity, as one inserts the remaining edges, at each step there are always at least two vertices of degree $3$.
\end{proof}

Another way of phrasing Lemma \ref{le:33} is, a polyhedron of graph radius $1$ may be obtained from a pyramid by adding edges.

\begin{lemma}
	\label{le:nrad1}
Let $G$ be a polyhedron of graph radius $1$, $u\in V(G)$ of eccentricity $1$, and
\[[v_1,v_2,\dots,v_n]\]
the base of the pyramid $\Gamma_u(G)$ as per Lemma \ref{le:33}. Then for every $1\leq i\neq j\leq n$ one has
\[|N(v_i,u)|=\deg_G(v_i)-1\geq 2\]
and
\[1\leq |N(v_i,v_j)|\leq 3,\]
with $|N(v_i,v_j)|=3$ if and only if $v_i,v_j$ are non-consecutive vertices along a $4$-cycle in $G-u$.
\end{lemma}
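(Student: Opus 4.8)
The plan is to exploit that $u$ is adjacent to every other vertex, together with the outerplanarity of $G-u$ established in Lemma \ref{le:33}, reducing everything to a crossing argument on the base cycle $[v_1,v_2,\dots,v_n]$.

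First I would dispose of the equality $|N(v_i,u)|=\deg_G(v_i)-1$. Since $u$ has eccentricity $1$, it is adjacent to every vertex distinct from itself; hence every neighbour of $v_i$ other than $u$ is automatically a common neighbour of $v_i$ and $u$, and conversely every common neighbour of $v_i$ and $u$ is a neighbour of $v_i$ distinct from $u$. Thus $N(v_i,u)=N(v_i)\setminus\{u\}$, which has cardinality $\deg_G(v_i)-1$. As $G$ is $3$-connected its minimum degree is at least $3$, so this quantity is at least $2$.

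Next, for the double bound on $|N(v_i,v_j)|$, the key observation is that $u\in N(v_i,v_j)$ (giving the lower bound $\geq 1$) and that every common neighbour of $v_i,v_j$ distinct from $u$ lies in $G-u$. Writing $N(v_i,v_j)=\{u\}\sqcup N'$, where $N'$ denotes the set of common neighbours of $v_i$ and $v_j$ lying in $G-u$, it suffices to prove $|N'|\leq 2$. This is where I would use that $G-u$ is outerplanar with outer boundary the base cycle. Fixing $i<j$, the two vertices split the remaining cycle vertices into the arc $A=\{v_{i+1},\dots,v_{j-1}\}$ and the complementary arc $B$. I claim at most one common neighbour lies in each arc: if $v_s,v_t$ with $i<s<t<j$ were both common neighbours of $v_i,v_j$, then the chords $v_iv_t$ and $v_sv_j$ would have interleaving endpoints (in cyclic order $v_i,v_s,v_t,v_j$) and hence cross, contradicting the planarity of the outerplanar embedding; the argument for $B$ is identical up to cyclic relabelling. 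This yields $|N'|\leq 2$ and hence $|N(v_i,v_j)|\leq 3$.

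Finally, the characterisation of equality follows by tracing the bound. The value $|N(v_i,v_j)|=3$ holds exactly when $N'$ meets both arcs, say in $w_A\in A$ and $w_B\in B$; then $v_i,w_A,v_j,w_B$ is a $4$-cycle in $G-u$ on which $v_i,v_j$ are opposite, i.e.\ non-consecutive, vertices. Conversely, any $4$-cycle of $G-u$ on which $v_i,v_j$ are non-consecutive exhibits two distinct common neighbours of $v_i,v_j$ inside $G-u$, forcing $|N'|=2$ by the upper bound, and so $|N(v_i,v_j)|=3$. The main obstacle is the upper-bound step: one must set up the two arcs correctly and verify that a second common neighbour in the same arc produces crossing chords. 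Everything else is bookkeeping layered on top of the domination property of $u$ and the outerplanarity of $G-u$ already proved.
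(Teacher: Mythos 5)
Your proposal is correct and follows essentially the same route as the paper: the first identity from the domination property of $u$, the bound $|N(v_i,v_j)|\leq 3$ by splitting the base cycle into the two arcs determined by $v_i,v_j$ and allowing at most one common neighbour per arc, and the equality case via the resulting $4$-cycle in $G-u$. The only cosmetic difference is that you justify the one-per-arc claim explicitly through interleaving chords in the outerplanar embedding of $G-u$, whereas the paper invokes planarity of $G$ directly; the underlying argument is identical.
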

\begin{proof}
To check the first statement, we note that, since $u$ is of eccentricity $1$,
\[N(v_i,u)=\{w\in V(G-u) : v_iw\in E(G)\}.\]
For the second statement, the lower bound is clear from $N(v_i,v_j)\supseteq\{u\}$. For the upper bound, by planarity $v_i,v_j$ may have at most one common neighbour among \eqref{eq:am1} and at most one common neighbour among \eqref{eq:am2}. We have $|N(v_i,v_j)|=3$ if and only if $v_i,v_j$ have a common neighbour $x$ among \eqref{eq:am1} and a common neighbour $y$ among \eqref{eq:am2}, if and only if $v_i,x,v_j,y$ is a $4$-cycle in $G-u$.
\end{proof}

\begin{prop}
	\label{prop:arad1}
Let $G\not\in\ks_1$ be a polyhedron of graph radius $1$, and $u\in V(G)$ of eccentricity $1$. Then we have
\[A(G)=\{1,2\}\cup\{\deg_G(v)-1 : v\in V(G-u)\}\]
if and only if there are no $4$-cycles in $G-u$, otherwise we have
\[A(G)=\{1,2,3\}\cup\{\deg_G(v)-1 : v\in V(G-u)\}.\]
\end{prop}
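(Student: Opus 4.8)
The plan is to prove Proposition \ref{prop:arad1} by directly computing the set $A(G)$ using the structural description of graph-radius-$1$ polyhedra established in the previous lemmas. The key observation is that $A(G)$ decomposes according to the three kinds of vertex pairs in $G$: the pairs $(u,v_i)$ involving the dominating vertex $u$, and the pairs $(v_i,v_j)$ of base vertices.

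First I would handle the contributions of the pairs $(u,v_i)$. By the first statement of Lemma \ref{le:nrad1}, for each base vertex $v_i$ we have $|N(v_i,u)|=\deg_G(v_i)-1\geq 2$, so these pairs contribute exactly the set $\{\deg_G(v)-1 : v\in V(G-u)\}$ to $A(G)$, and every element of this set is at least $2$. Next I would handle the pairs $(v_i,v_j)$ of base vertices. By the second statement of Lemma \ref{le:nrad1}, each such pair satisfies $1\leq|N(v_i,v_j)|\leq 3$, with the value $3$ achieved precisely when $v_i,v_j$ are non-consecutive vertices on a $4$-cycle in $G-u$. Thus the presence or absence of $3$ in $A(G)$ from base-vertex pairs is governed exactly by whether $G-u$ contains a $4$-cycle, which is the dichotomy stated in the proposition.

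The remaining work is to verify that both $1$ and $2$ always occur in $A(G)$, independently of the $4$-cycle condition, so that the union is exactly as claimed. The value $1$ arises from consecutive base vertices $v_i,v_{i+1}$: along the pyramid base these share the neighbour $u$, and I would argue that for $G\not\in\ks_1$ we can find such an adjacent pair whose only common neighbour is $u$ (so that $|N(v_i,v_{i+1})|=1$). The value $2$ arises because some pair of base vertices shares exactly two common neighbours; since $G$ is a polyhedron with $1\in A$ and radius $1$, by Corollary \ref{cor:2} it suffices to exhibit a $4$-cycle in $G$ itself, and one is always available through $u$ (for instance $v_i,u,v_j,x$ when $v_i,v_j$ have a common base neighbour). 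I would make these existence claims precise by invoking Lemma \ref{le:33}, which guarantees that the pyramid has at least two vertices of degree $3$, and the fact that $G\not\in\ks_1$ excludes the degenerate configurations (bipyramids, $T_\ell$, and the sporadic $S_i$) where some of these values might fail to appear.

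The main obstacle I anticipate is the careful verification that both $1$ and $2$ genuinely occur for \emph{every} $G\not\in\ks_1$, rather than merely typically. The exclusion of $\ks_1$ is doing real work here: the families in $\ks_1$ are precisely the graph-radius-$1$ polyhedra where the common-neighbour counts degenerate (e.g.\ for $T_\ell$ every adjacent base pair has a third common neighbour, forcing $1\notin A$, which is why $T_\ell$ must be excluded). I would therefore need to check, likely by examining the structure of $G-u$ as an outerplanar Hamiltonian graph (per Lemma \ref{le:33}) together with the chord-insertion process described there, that outside $\ks_1$ there is always a base edge contributing the value $1$ and always a near-pair contributing the value $2$. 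This case analysis, tied to the classification of $\ks_1$, is where the bulk of the argument lies; the computation of the maximal value and the $4$-cycle dichotomy follows cleanly from Lemma \ref{le:nrad1}.
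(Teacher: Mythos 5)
Your decomposition into apex--base pairs and base--base pairs, the identification of the contribution $\{\deg_G(v)-1 : v\in V(G-u)\}$ via the first part of Lemma \ref{le:nrad1}, the containment $A(G)\subseteq\{1,2,3\}\cup\{\deg_G(v)-1 : v\in V(G-u)\}$, the $4$-cycle dichotomy for the value $3$, and the argument for $2\in A$ (Lemma \ref{le:33} gives a degree-$3$ base vertex, hence a pair with exactly two common neighbours; your detour through Corollary \ref{cor:2} would also work) all match the paper's proof. The genuine gap is the claim $1\in A$. You correctly flag it as ``where the bulk of the argument lies,'' but you never prove it: you only propose to re-derive it by analysing the chord-insertion process on the outerplanar graph $G-u$ and checking configurations case by case. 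That analysis is not carried out, and as described it would essentially amount to re-proving the radius-$1$ portion of the classification in Section \ref{sec:no1} --- a substantial piece of work. Moreover, your appeal to ``the fact that $G\not\in\ks_1$ excludes the degenerate configurations where some of these values might fail to appear'' is circular as stated: the definition of $\ks_1$ is just a list of graphs, and membership outside that list says nothing about $A(G)$ unless one invokes the theorem connecting the two.

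That theorem is exactly what the paper uses: Theorem \ref{thm:1} states that a polyhedron satisfies $1\not\in A$ if and only if it belongs to $\ks_1$, so its contrapositive yields $1\in A$ for every $G\not\in\ks_1$ in one line, with no structural analysis needed. This single citation is the missing idea; once inserted, your proof closes and coincides with the paper's. A secondary caution about your planned route: you assert that the value $1$ would arise from \emph{consecutive} base vertices, but that is not always where it comes from --- for instance, in the proof of Proposition \ref{prop:rad1} for members of $\cw_4$, the pairs realising exactly one common neighbour are non-consecutive base vertices such as $v_{i+1},v_{i-2}$ --- so the case analysis you sketch would also have to be broader than stated.
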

\begin{proof}
By Theorem \ref{thm:1}, since $G\not\in\ks_1$, one has $1\in A$. By Lemma \ref{le:33}, $G$ has at least two vertices of degree $3$. Hence by Lemma \ref{le:nrad1}, $2\in A$. Also by Lemma \ref{le:nrad1}, if $a\in A$ then
\[a\in\{1,2,3\}\cup\{\deg_G(v)-1 : v\neq u\}.\]
Still by Lemma \ref{le:nrad1}, if $G-u$ contains a $4$-cycle, then $3\in A$.
\end{proof}


We are ready to classify polyhedra of graph radius $1$ satisfying $A=\{1,2,\ell\}$ for $\ell\geq 2$. We begin with the case $\ell\leq 4$.
\begin{prop}
	\label{prop:rad1}
Let $G$ be a polyhedron of graph radius $1$ on at least six vertices. Then
\begin{enumerate}
\item
$A=\{1,2\}$ if and only if $G$ is an $n$-gonal pyramid for some $n\geq 5$;
\item
$A=\{1,2,3\}$ if and only if $G\in\cw_3$;
\item
$A=\{1,2,4\}$ if and only if $G\in\cw_4$.
\end{enumerate}
\end{prop}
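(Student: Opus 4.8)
The three parts rest on the same reduction. Each prescribed type contains $1$, so by Theorem \ref{thm:1} we have $G\notin\ks_1$; Lemma \ref{le:ecc} then gives a unique vertex $u$ of eccentricity $1$, and by Lemma \ref{le:33} the plane neighbourhood $\Gamma_u(G)$ is a wheel with base cycle $[v_1,\dots,v_n]$, with $G$ obtained from it by adding chords of that cycle. Since $p\geq 6$ we have $n\geq 5$. As every $v_i$ is adjacent to $u$, we have $\deg_G(v_i)-1=\deg_{G-u}(v_i)$, so Proposition \ref{prop:arad1} reads off $A(G)$ from two data: the degree multiset $\{\deg_{G-u}(v_i)\}$ and whether $G-u$ contains a $4$-cycle. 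The whole proof translates each prescribed value set into a structural constraint on the added chords and matches it with the class definitions. For each converse, one exhibits a pair of vertices whose only common neighbour is $u$ (so $1\in A$ and hence $G\notin\ks_1$ by Theorem \ref{thm:1}) and reapplies Proposition \ref{prop:arad1}.

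Parts (1) and (2) are short. If $A=\{1,2\}$ then by Proposition \ref{prop:arad1} there is no $4$-cycle in $G-u$ and every $\deg_{G-u}(v_i)\in\{2\}$; as $G-u$ is $2$-connected its minimum degree is $2$, so all degrees equal $2$, there are no chords, and $G$ is the $n$-gonal pyramid; the converse is immediate since $C_n$ ($n\geq 5$) is chordless and $4$-cycle-free. If $A=\{1,2,3\}$ then every $\deg_{G-u}(v_i)\in\{2,3\}$, i.e.\ each base vertex is the endpoint of at most one chord, so the chords form a matching; at least one chord is present (else $A=\{1,2\}$), whence $G\in\cw_3$. Conversely a matching keeps all degrees in $\{2,3\}$ and produces a degree-$3$ vertex, so $3\in A$, and Proposition \ref{prop:arad1} gives $A=\{1,2,3\}$ in either branch of its dichotomy.

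Part (3) is the heart of the matter. Here $3\notin A$, so the dichotomy of Proposition \ref{prop:arad1} forces $G-u$ to be $4$-cycle-free and leaves every $\deg_{G-u}(v_i)\in\{2,4\}$ with the value $4$ attained. Thus each base vertex is the endpoint of $0$ or $2$ chords, so the chords, regarded as a graph on $\{v_1,\dots,v_n\}$, form a vertex-disjoint union of cycles, at least one---precisely the ``pairwise disjoint cycles'' of the definition of $\cw_4$. What remains is to prove that for such a graph the absence of $4$-cycles in $G-u$ is equivalent to the three conditions defining $\cw_4$. The forward direction is a direct contrapositive: a chord-cycle of length $4$ is itself a $4$-cycle (so each cycle has length $\neq 4$); a chord joining base vertices at cyclic distance $3$ closes a $4$-cycle with three base edges, and two chords $v_iv_j,v_{i+1}v_{j-1}$ close a $4$-cycle with two non-adjacent base edges (both excluded by ``$v_iv_j\in E(G)\Rightarrow v_{i+1}v_{j-1}\notin E(G)$''); and a triangle with two vertices at cyclic distance $2$ or $3$ yields a $4$-cycle (so triangle vertices must satisfy $4\leq j-i\leq n-4$).

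The main obstacle is the converse: that the three conditions, together with the disjoint-cycle structure, preclude \emph{every} $4$-cycle. I would argue by the contrapositive, classifying a hypothetical $4$-cycle $Q$ of $G-u$ by the number of base-cycle edges it uses. The cases of zero, three, and two non-adjacent base edges fall under the conditions just listed. The delicate cases are those where $Q$ uses one base edge, or two adjacent base edges $v_{i-1}v_i,v_iv_{i+1}$ closed through a vertex doubly joined by chords---configurations that the three conditions do not literally mention. This is exactly where planarity is indispensable: using that the chords are drawn without crossings in the outer region and that they form disjoint cycles, one shows that such a $Q$ would trap an interior base vertex so that it admits no second chord, forcing it to have degree $3$ and contradicting $\deg_{G-u}\in\{2,4\}$, or else would force the offending chords to complete into a short triangle already excluded by the spacing condition $4\leq j-i\leq n-4$. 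Carrying out this planarity-plus-parity exclusion is the bulk of the work; once it is done, both implications of (3) follow, the converse verification being the routine observation that each $\cw_4$ condition was designed to delete exactly one family of potential $4$-cycles while the disjoint-cycle structure pins the base degrees to $\{2,4\}$.
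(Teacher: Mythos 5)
Your reduction and your parts (1) and (2) follow the paper's proof essentially step for step: same use of Theorem \ref{thm:1}, Lemmas \ref{le:ecc} and \ref{le:33}, and Proposition \ref{prop:arad1} to translate the type into degree and $4$-cycle conditions on the chords. However, there is a recurring unpaid debt even there: every application of Proposition \ref{prop:arad1} in a converse direction requires $G\notin\ks_1$, which you propose to discharge by ``exhibiting a pair of vertices whose only common neighbour is $u$'' --- but you never produce such a pair. This exhibition is not free. In the paper it is a genuine (if short) planarity case analysis: for $\cw_3$, if $v_iv_j\in E(G)$ then $N(v_i,v_{i+1})=\{u\}$ when $j\neq i+2$, and $N(v_{i-1},v_i)=\{u\}$ when $j=i+2$ (using $n\geq 5$ and non-crossing of chords); for $\cw_4$ an analogous two-case argument is needed. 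Without it, the converse halves of (2) and (3) are not established, since Proposition \ref{prop:arad1} cannot legitimately be invoked.

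The genuine gap is in part (3), which you yourself call ``the bulk of the work'' and then leave undone: the proof that the three conditions defining $\cw_4$, together with planarity, exclude every $4$-cycle of $G-u$ (equivalently $3\notin A$, by Lemma \ref{le:nrad1}). Worse, the mechanism you sketch for the delicate cases would not close them. You claim a $4$-cycle $Q$ through one base edge, or two adjacent base edges, would ``trap an interior base vertex so that it admits no second chord, forcing it to have degree $3$ and contradicting $\deg_{G-u}\in\{2,4\}$''; but a trapped vertex with \emph{no} chord has degree $2$ in $G-u$, i.e.\ degree $3$ in $G$, which is perfectly admissible for members of $\cw_4$, so no contradiction arises this way. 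The argument that actually works (and is what the paper's terse final step relies on) is different: all chords lie on the side of the base cycle opposite to $u$, hence are pairwise non-crossing, hence each added cycle visits its vertices in the circular order of the base; consequently no added cycle can contain two consecutive base vertices $v_i,v_{i+1}$, since it would then need the edge $v_iv_{i+1}$, which is already a base edge. Given a hypothetical $4$-cycle $v_i,v_h,v_j,v_k$ witnessing $N(v_i,v_j)=\{u,v_h,v_k\}$, either all four of its edges are chords, so it is itself an added cycle of length $4$ (excluded), or w.l.o.g.\ $h=i+1$, and then its remaining chords chain $v_i$ and $v_{i+1}$ (or $v_{i-1}$ and $v_{i+1}$, etc.) into a common added cycle --- impossible by the above --- unless that cycle degenerates into a triangle having two vertices at base-distance $2$ or into a chord of span $3$, both excluded by the second and third conditions of $\cw_4$. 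This circular-order/planarity step is the missing core of part (3), and the paper's separate verification that $1\in A$ for $\cw_4$ (choosing $v_{i+1},v_{i-2}$ or $v_{i+1},v_{i-3}$ according to the position of the added cycle) is missing for the same reason.
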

\begin{proof}
By Lemma \ref{le:33}, $G$ is obtained by adding edges to the pyramid $H:=\Gamma_u(G)$ of apex $u$ and base
\[B=[v_1,v_2,\dots,v_n], \qquad n\geq 5.\]
Along $B$ we consider indices modulo $n$ i.e., we take $v_0:=v_n$ and for every $i\in\mathbb{Z}$, $v_i:=v_{(i \mod n)}$.
\begin{enumerate}
\item
If $A=\{1,2\}$, then by Proposition \ref{prop:arad1} each vertex on the base of $H$ has degree $3$ in $G$, thus $G=H$. Conversely, it is easy to see that if $G=H$, then for every $i<j$ one has
\[N(v_i,v_j)=
\begin{cases}
2&j-i\in\{2,n-2\}\\
1&\text{otherwise}.
\end{cases}\]
\item
If $A=\{1,2,3\}$, then by Proposition \ref{prop:arad1} each vertex on $B$ has degree $3$ or $4$ in $G$. Hence $G$ is obtained from $H$ by adding pairwise independent edges. Since pyramids satisfy $A=\{1,2\}$, then we need to add at least one edge. Thus $G\in\cw_3$.

Conversely, if $G\in\cw_3$ i.e., $G$ is obtained from $H$ by adding pairwise independent edges, then each vertex on $B$ has degree either $3$ or $4$ in $G$. Since we have added at least one edge, then by Lemmas \ref{le:33} and \ref{le:nrad1} we have $\{2,3\}\subseteq A\subseteq\{1,2,3\}$. Now let $v_iv_j\in E(G)$ with $i<j$, as in Figure \ref{fig:w3}. Since $G$ is obtained from $H$ by adding independent edges, we deduce that
\[N(v_i)=\{u,v_{i-1},v_{i+1},v_j\}.\]
By planarity, $v_{i+1}v_{i-1}\not\in E(G)$. If $j\neq i+2$, again since $G$ is obtained from $H$ by adding independent edges, we have $v_{i+1}v_j\not\in E(G)$. It follows that $N(v_{i},v_{i+1})=\{u\}$, thus $1\in A$. If instead $j=i+2$, then since $n\geq 5$ we obtain $v_{i-1}v_j\not\in E(G)$, so that $N(v_{i-1},v_{i})=\{u\}$, thus $1\in A$. Ultimately in this case $A=\{1,2,3\}$ as required.
\item
If $A=\{1,2,4\}$, then by Proposition \ref{prop:arad1} each vertex on $B$ has degree $3$ or $5$ in $G$. Therefore, the subgraph of $G$ generated by the edges $E(G)\setminus E(H)$ is $2$-regular i.e., it is a (non-empty) disjoint union of cycles. By contradiction, let
\[w_1,w_2,w_3,w_4\]
be one of these cycles. Then
\[N(w_1,w_3)=\{u,w_2,w_4\},\]
thus $3\in A$, contradiction. Now let
\[v_i,v_j,v_k, \qquad 1\leq i<j<k\leq n, \quad |j-i|, |k-j|, |n-(k-i)|\geq 2\]
be one of the cycles. If $j=i+2$, then
\[N(v_i,v_{i+2})=\{u,v_k,v_{i+1}\},\]
thus $3\in A$, impossible. If $j=i+3$, then
\[N(v_i,v_{i+2})=\{u,v_{i+1},v_{i+3}\},\]
thus $3\in A$, impossible. Hence in fact
\[|j-i|, |k-j|, |n-(k-i)|\geq 4.\]
Now by contradiction assume that $v_{i}v_j,v_{i+1}v_{j-1}\in E(G)$ for some $i<j$. Then
\[N(v_i,v_{j-1})=\{u,v_{i+1},v_{j}\},\]
thus $3\in A$, contradiction.

Conversely, let $G\in\cw_4$. The elements of $V(H)$ have degree either $3$ or $5$ in $G$. Thus by Lemma \ref{le:nrad1}, one has for every $1\leq i<j\leq n$
\[|N(v_i,u)|=\deg_G(v_i)-1\in\{2,4\}\]
and
\[1\leq |N(v_i,v_j)|\leq 3.\]
It remains to show that $1\in A$ and $3\not\in A$. Let $\cc$ be one of the cycles added to $H$, and $v_{i}\in\cc$, $4\leq i\leq n-1$. If the vertex on $\cc$ preceding $v_i$ is not $v_{i-2}$, then by planarity
\[N(v_{i+1},v_{i-2})=\{u\},\]
thus $1\in A$. If the vertex on $\cc$ preceding $v_i$ is $v_{i-2}$, then $v_{i-3}\not\in\cc$, hence by planarity
\[N(v_{i+1},v_{i-3})=\{u\},\]
thus $1\in A$.

Now by contradiction, let there exist $1\leq i<j\leq n$ such that
\[|N(v_i,v_j)|=3.\]
That is to say, $j\geq i+2$ and we may find $h$ among \eqref{eq:am1} and $k$ among \eqref{eq:am2} such that
\[N(v_i,v_j)=\{u,v_h,v_k\}.\]
Since no cycle added to $H$ is of length $4$, then at least one of $h\in\{i+1,j-1\}$ or $k\in\{i-1,j+1\}$ holds, w.l.o.g.\ $h=i+1$ (Figure \ref{fig:w4}). Hence one of the cycles added to $H$ contains $v_i,v_{i+1}$, contradiction. We have proven that $3\not\in A$.
\begin{figure}[ht]
	\centering
	\includegraphics[width=4.cm]{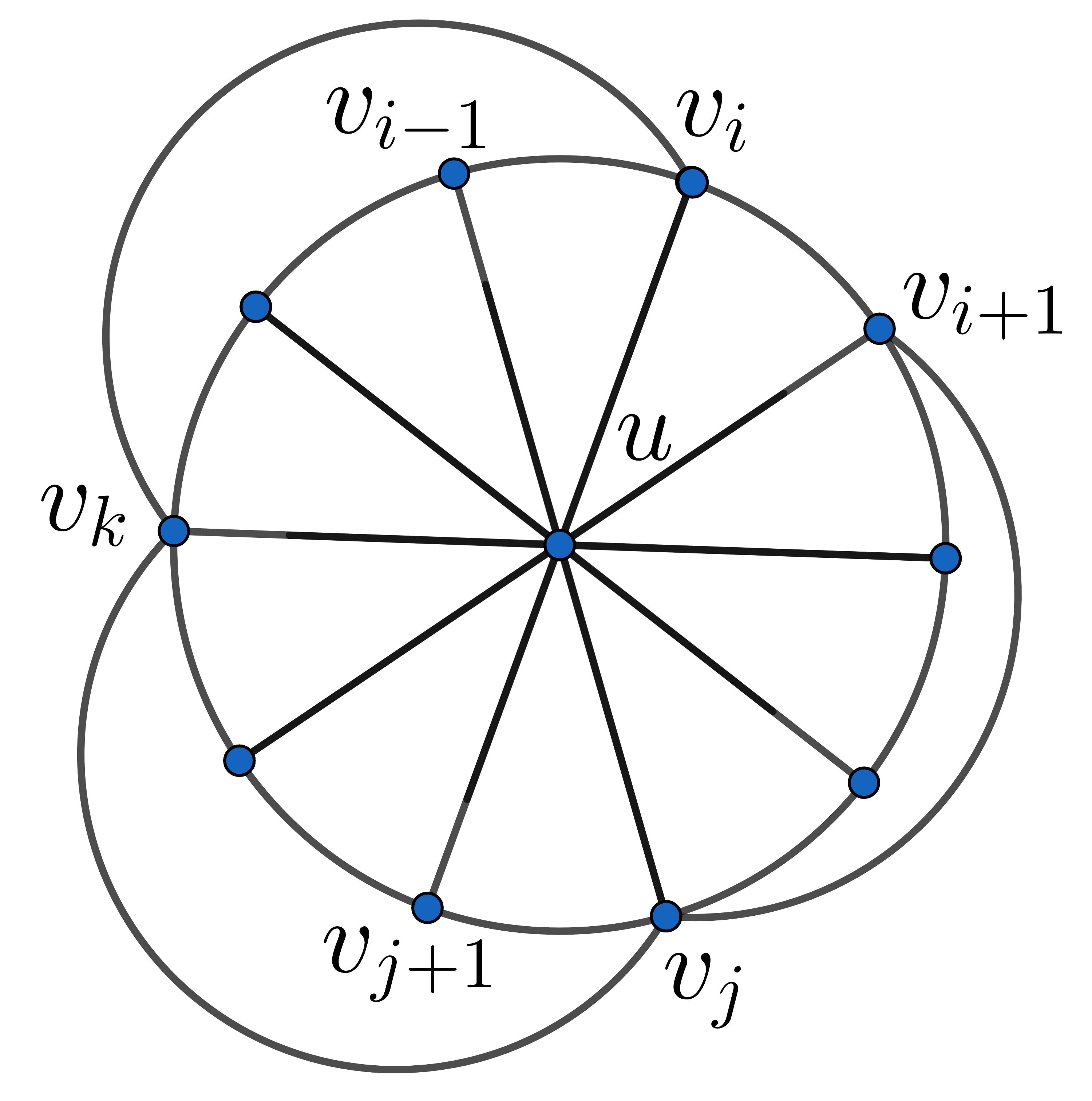}
	\caption{If $G\in\cw_4$, then $3\not\in A$.}
	\label{fig:w4}
\end{figure}
\end{enumerate}
\end{proof}

In fact, we are ready to classify all polyhedra such that $A=\{1,2,\ell\}$ for $2\leq\ell\leq 4$.
\begin{cor}
	\label{cor:1234}
	If $G$ is a polyhedron satisfying $A=\{1,2\}$, then either $G$ is a pyramid or $p\leq 24$. If $G$ is a polyhedron satisfying $A=\{1,2,3\}$, then either $G\in\cw_3$ or $p\leq 47$. If $G$ is a polyhedron satisfying $A=\{1,2,4\}$, then either $G\in\cw_4$ or $p\leq 78$.
\end{cor}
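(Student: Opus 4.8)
The plan is to prove all three statements at once by a dichotomy on $\rad(G)$, assembling the radius-$1$ classification of Proposition \ref{prop:rad1} with the diameter-$2$ counting bound of Lemma \ref{le:no0}. The two ingredients fit together because in each of the three cases $0\not\in A$, so Lemma \ref{le:no0} is applicable, while $1\in A$, and the latter guarantees $G\not\in\ks_1$ by Theorem \ref{thm:1}, which is exactly the hypothesis the radius-$1$ analysis needs.

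First I would dispose of the case $\rad(G)=1$. If in addition $p\geq 6$, then Proposition \ref{prop:rad1} applies directly and delivers the stated conclusion in each case: $A=\{1,2\}$ forces $G$ to be an $n$-gonal pyramid with $n\geq 5$, $A=\{1,2,3\}$ forces $G\in\cw_3$, and $A=\{1,2,4\}$ forces $G\in\cw_4$. If instead $\rad(G)=1$ with $p\leq 5$, there is nothing to check, since $5$ lies below each of the thresholds $24$, $47$, $78$.

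For the complementary case $\rad(G)\neq 1$, I would apply Lemma \ref{le:no0}. As $G$ is planar and connected with $0\not\in A$, it yields $p\leq 4M^2+3M+2$, where $M=\max\{a:a\in A\}$. Substituting $M=2,3,4$ respectively gives
\[4\cdot 4+3\cdot 2+2=24, \qquad 4\cdot 9+3\cdot 3+2=47, \qquad 4\cdot 16+3\cdot 4+2=78,\]
which are precisely the three bounds in the statement.

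Because the proof is a direct combination of two results already in hand, I do not expect any substantive obstacle; the only point deserving a line of care is the low-order bookkeeping, namely verifying that radius-$1$ polyhedra on fewer than six vertices—to which Proposition \ref{prop:rad1} does not literally apply—are harmlessly absorbed by the numerical thresholds, and that the exact type hypotheses indeed give $\max A\in\{2,3,4\}$ and $0\not\in A$ as required by the two invoked results.
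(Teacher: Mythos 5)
Your proposal is correct and follows essentially the same route as the paper: the paper likewise notes that $0\not\in A$ forces $\rad(G)\in\{1,2\}$ (via Lemma \ref{le:0}), then invokes Lemma \ref{le:no0} for the radius-$2$ case and Proposition \ref{prop:rad1} for the radius-$1$ case, with the same substitutions $M=2,3,4$ yielding $24$, $47$, $78$. Your explicit handling of radius-$1$ polyhedra on at most five vertices is a small point of extra care that the paper's terse proof leaves implicit, but it is not a different argument.
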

\begin{proof}
	In each case, since $0\not\in A$, by Lemma \ref{le:0} we have $\diam(G)\leq 2$, hence $\rad(G)\in\{1,2\}$. The results of the present Corollary now follow from Lemma \ref{le:no0} and Proposition \ref{prop:rad1}.
\end{proof}

We may now characterise polyhedra of order at least $25$ satisfying $A=\{0,1,2\}$.
\begin{cor}
	\label{cor:012}
	Let $G$ be a polyhedron on at least $25$ vertices. Then we have $A=\{0,1,2\}$ if and only if $G$ is not a pyramid, contains a $4$-cycle, but no subgraph isomorphic to $K(2,3)$.
\end{cor}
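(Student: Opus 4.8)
The plan is to prove the two implications separately, with essentially all the real content concentrated in the size bound already available from Corollary \ref{cor:1234}. Both directions will be pure bookkeeping once the right earlier results are lined up: Corollary \ref{cor:2} (the equivalence $2\in A \Leftrightarrow$ a $4$-cycle), Lemma \ref{le:2} (a $K(2,3)$ forces some $a\geq 3$ into $A$), Theorem \ref{thm:1} (the classification for $1\notin A$), and Corollary \ref{cor:1234} (a type-$\{1,2\}$ polyhedron is a pyramid or has $p\leq 24$).

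For the forward implication I would assume $A=\{0,1,2\}$ and read off the three asserted properties one at a time. Since $2\in A$, Corollary \ref{cor:2} gives a $4$-cycle. Since $3\notin A$, no pair has three common neighbours, so $G$ has no $K(2,3)$ subgraph, for otherwise Lemma \ref{le:2} would push some $a\geq 3$ into $A$. Finally, to exclude pyramids I would use $0\in A$: a pyramid has a dominating vertex $w$, which is a common neighbour of every pair avoiding $w$, while for a pair $\{w,v\}$ one has $|N(w,v)|=\deg_G(v)-1\geq 2$ by $3$-connectivity (cf. Lemma \ref{le:nrad1}); hence any polyhedron of radius $1$, in particular any pyramid, satisfies $0\notin A$. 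So $G$ is not a pyramid, completing this direction.

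For the converse I would assume $G$ is a polyhedron on $p\geq 25$ vertices, not a pyramid, containing a $4$-cycle and no $K(2,3)$. The absence of $K(2,3)$ gives $|N(u,v)|\leq 2$ for all $u,v$, hence $A\subseteq\{0,1,2\}$, and the $4$-cycle gives $2\in A$ via Corollary \ref{cor:2}; it then remains to force both $0\in A$ and $1\in A$. For $0\in A$: if $0\notin A$ then $A\subseteq\{1,2\}$ with $2\in A$, so either $A=\{2\}$ (whence $G$ is the tetrahedron by Theorem \ref{thm:1}, with $p=4$) or $A=\{1,2\}$ (whence $G$ is a pyramid or $p\leq 24$ by Corollary \ref{cor:1234}); each alternative contradicts $p\geq 25$ together with $G$ not being a pyramid. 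For $1\in A$: if $1\notin A$ then $A\subseteq\{0,2\}$, i.e. $A=\{0,2\}$, which by Theorem \ref{thm:1} forces $G$ to be the cube or icosahedron ($p\leq 12$), again contradicting $p\geq 25$. Therefore $A=\{0,1,2\}$.

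The one genuinely load-bearing step is establishing $0\in A$ in the converse, and the entire difficulty has already been absorbed into Corollary \ref{cor:1234}: the fact that a type-$\{1,2\}$ polyhedron is either a pyramid or of bounded order (ultimately Lemma \ref{le:no0} with $M=2$). This is also exactly what dictates the hypothesis $p\geq 25$, since the bound there is $p\leq 24$; no new estimate is required here, and the remaining work is only to invoke the listed results in the correct logical order.
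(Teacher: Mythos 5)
Your proof is correct and follows essentially the same route as the paper's: both directions rest on Corollary \ref{cor:2}, Lemma \ref{le:2}, Theorem \ref{thm:1}, and the bound $p\leq 24$ from Corollary \ref{cor:1234}, with only the order of the case analysis in the converse differing. The one small deviation is that to rule out pyramids in the forward direction you argue directly that any radius-$1$ polyhedron has $0\notin A$ (apex dominates, and $|N(\text{apex},v)|=\deg_G(v)-1\geq 2$), whereas the paper cites Proposition \ref{prop:rad1} (pyramids on at least $6$ vertices have type $\{1,2\}$); both are valid one-line justifications.
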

\begin{proof}
	Let $G$ be a polyhedron of order at least $25$. Suppose that $A=\{0,1,2\}$. By Lemma \ref{le:2}, $G$ does not contain any copies of $K(2,3)$. By Corollary \ref{cor:2}, $G$ contains a $4$-cycle. By Proposition \ref{prop:rad1}, first part, pyramids on at least $6$ vertices verify $A=\{1,2\}$, thus $G$ is not a pyramid.
	
	Vice versa, suppose that $G$ is not a pyramid, contains a $4$-cycle, but no subgraph isomorphic to $K(2,3)$. As $G$ contains a $4$-cycle, by Corollary \ref{cor:2} $2\in A$. Since $G$ does not contain a copy of $K(2,3)$, then $A$ cannot contain any number greater than $2$. It follows that
	\[\{2\}\subseteq A\subseteq\{0,1,2\}.\]
	By Theorem \ref{thm:1}, the only polyhedra where $A$ does not contain $1$ nor any number greater than $2$ are the tetrahedron, cube, and icosahedron. Since $G$ has at least $25$ vertices, we may exclude these, thus $1\in A$. By Corollary \ref{cor:1234},
	since $G$ has at least $25$ vertices and is not a pyramid, we exclude the case $A=\{1,2\}$. This leaves $A=\{0,1,2\}$, as desired.
\end{proof}

To complete the classification of polyhedra of graph radius $1$ satisfying $A=\{1,2,\ell\}$ for $\ell\geq 2$, we will show that in case $\ell\geq 5$ there are no solutions. In fact, the following result is more general, and will be useful in Section \ref{sec:compl}.
\begin{lemma}
	\label{le:3o4r1}
Let $G$ be a polyhedron of graph radius $1$, and $a\in A$ such that $a\geq 5$. Then either $3$ or $4$ belong to $A$.
\end{lemma}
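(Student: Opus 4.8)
We are given a polyhedron $G$ of graph radius $1$ with some $a \in A$ satisfying $a \geq 5$, and we must show $3 \in A$ or $4 \in A$. Since $G$ has radius $1$, there is a vertex $u$ of eccentricity $1$, and by Lemma \ref{le:33} the plane neighbourhood $\Gamma_u(G)$ is a pyramid with base $[v_1,v_2,\dots,v_n]$, so $G$ is obtained from this pyramid by adding edges among the base vertices. We may assume $G \not\in \ks_1$ (otherwise we appeal to the explicit classification of Theorem \ref{thm:1}, where no type contains an element $\geq 5$ together with neither $3$ nor $4$). By Proposition \ref{prop:arad1}, the fact that $a \geq 5$ belongs to $A$ forces, via Lemma \ref{le:nrad1}, that $a = \deg_G(v_i) - 1$ for some base vertex $v_i$; thus $v_i$ has degree $\geq 6$ in $G$, meaning it is joined to $u$, to its two base-neighbours $v_{i-1}, v_{i+1}$, and to at least three further base vertices.

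The core of the argument is a local planarity analysis around this high-degree vertex $v_i$. Let $v_j$ and $v_k$ be two of the added neighbours of $v_i$ that are consecutive in the cyclic order of $v_i$'s neighbours around $v_i$ (so no other neighbour of $v_i$ lies in the base-arc strictly between them on the relevant side). The plan is to examine the region of the plane embedding bounded by the edges $v_iv_j$, $v_iv_k$ and the base-arc between $v_j$ and $v_k$: inside this region the graph $G$ must, by $3$-connectivity, be filled in, and I would track the common neighbours of suitably chosen pairs. The key observation is that if $v_j, v_k$ are the two endpoints of a chord-arc at $v_i$ with an intermediate base vertex $v_m$ (where $j < m < k$), then $v_m$ lies in the region cut off by $v_iv_j$ and $v_iv_k$, and $v_m$ cannot reach $u$ or the outside except through $v_i$ or the boundary; analysing $N(v_i, v_m)$ or $N(v_j, v_k)$ should produce a common-neighbour count of exactly $3$ or $4$. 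Concretely, $v_i$ together with the two boundary chords creates a configuration where a pair of base vertices shares $v_i$, $u$, and one or two further vertices forced by triangulation/connectivity.

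I expect the main obstacle to be organising the case analysis cleanly: the added edges at $v_i$ partition the base cycle into arcs, and the outcome ($3$ versus $4$ common neighbours) depends on whether consecutive chords at $v_i$ have their endpoints adjacent along the base or separated by intermediate vertices, and on whether those intermediate vertices form a $4$-cycle in $G - u$. The cleanest route is probably to isolate two chords $v_iv_j, v_iv_k$ at $v_i$ that are cyclically adjacent \emph{and} bound a region containing at least one base vertex, then show that the innermost such configuration forces either a triangle on the base (giving a pair with three common neighbours $u, v_i$, and the shared base vertex, i.e.\ contributing $3$) or a $4$-cycle on the base (giving, via Lemma \ref{le:nrad1}, a pair with three common neighbours, hence $3 \in A$); a residual configuration where $v_i$'s neighbours along the base include a length-three gap will instead yield $4 \in A$ through $N(v_i, v_{i+2})$ or an analogous pair. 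The delicate point is to guarantee that such a "bounded" chord pair exists at all whenever $\deg_G(v_i) \geq 6$; this follows because $v_i$ has at least three added neighbours among $n \geq 5$ base vertices, so by pigeonhole the chords cannot all be to base-consecutive vertices, forcing a genuine enclosed region.
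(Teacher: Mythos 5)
Your opening coincides with the paper's own first steps and is correct: after discarding $\ks_1$ (the paper discards $T_\ell$ via Lemma \ref{le:ecc}), radius $1$ gives an apex $u$ whose plane neighbourhood is a pyramid (Lemma \ref{le:33}), and Proposition \ref{prop:arad1} forces a base vertex $v_i$ of degree $a+1\geq 6$. The plan that follows, however, has two concrete defects. First, $4\in A$ can never be produced by a pair of base vertices: Lemma \ref{le:nrad1} caps $|N(v_i,v_j)|$ at $3$ for base vertices, so your claim that a residual configuration ``will instead yield $4\in A$ through $N(v_i,v_{i+2})$ or an analogous pair'' is impossible; the value $4$ can only enter $A$ as $\deg_G(v)-1$ with $\deg_G(v)=5$, i.e.\ through a pair $(v,u)$. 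Second, the pigeonhole claim you flag as the delicate point is in fact false: the chords at $v_i$ can all go to base-consecutive vertices, e.g.\ $v_i$ joined to $v_{i+2},v_{i+3},v_{i+4}$ (exactly what happens in $T_\ell$), and then every region bounded by two cyclically consecutive chords at $v_i$ is a triangle containing no base vertex, so the ``genuine enclosed region'' your case analysis revolves around need not exist. (In that fan configuration $3\in A$ does hold, but only because $v_iv_{i+2}$, $v_iv_{i+4}$ and two base edges form a $4$-cycle in $G-u$; your mechanism does not detect it.) Even in the favourable case, where consecutive chords $v_iv_j,v_iv_k$ enclose two or more base vertices, the pairs you name are not forced to have three common neighbours: one gets $|N(v_i,v_m)|\leq 2$ for every enclosed vertex $v_m$, and $|N(v_j,v_k)|=3$ only if some enclosed vertex happens to be adjacent to both, so nothing closes without recursing into the region; similarly, in your triangle case the pair $(v_j,v_k)$ has $N(v_j,v_k)\supseteq\{u,v_i\}$ but no third common neighbour is forced.

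The idea you are missing is that the hypothesis $3,4\notin A$ constrains \emph{every} chord endpoint, not just $v_i$: any vertex incident to a chord has degree $\geq 4$ in $G$, and degrees $4$ and $5$ are excluded because $|N(v,u)|=\deg_G(v)-1$ would place $3$ or $4$ in $A$; hence every chord endpoint has degree $\geq a+1$, i.e.\ carries at least $a-2\geq 3$ chords. This converts the problem from a local one into a global one, and the paper finishes with an outerplanarity obstruction: the subgraph of $G-u$ induced by these high-degree vertices is an outerplanar graph of minimum degree $\geq a-2\geq 3$, which cannot exist, since an endblock of any component is a $2$-connected outerplanar graph (a polygon with diagonals), hence has two vertices of degree $2$, at least one of which is not the cut vertex. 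No analysis confined to the plane neighbourhood of the single vertex $v_i$ can substitute for this step, because there are arbitrarily large chord configurations that look consistent around each individual vertex and only contradict $3,4\notin A$ through this global minimum-degree count.
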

\begin{proof}
By contradiction, assume that $G$ is a polyhedron of graph radius $1$ with $3,4\not\in A$, and let $a$ be the smallest number such that $a\geq 5$ and $a\in A$. The graphs $T_\ell$, $\ell\geq 3$ verify $A=\{2,3,\ell\}$, thus $3\in A$. 

Henceforth we may assume that $G\not\simeq T_\ell$, so that by Lemma \ref{le:ecc} there exists in $G$ a unique vertex $u$ of eccentricity $1$. By Lemma \ref{le:33}, $\Gamma_u(G)$ is a pyramid, and by Proposition \ref{prop:arad1}, the elements of $V(G-u)$ have degree $3$ or $\geq a+1$ in $G$. Thus the subgraph of $G$ generated by the elements of $V(G-u)$ of degree $\geq a+1$ in $G$ is an outerplanar graph of minimum degree $a-2\geq 3$.

On the other hand, given a connected component $H$ of an outerplanar graph, we consider an endblock $H'$ of $H$. Then $H'$ is a $2$-connected, outerplanar graph i.e., a polygon with some added diagonals, and as such, it contains at least two vertices of degree $2$ \cite[Lemma 3.3]{maffucci2025regularity}. Since $H'$ is an endblock of $H$, at least one of these vertices has degree $2$ in $H$ as well. We have reached a contradiction.
\end{proof}

Note that Lemma \ref{le:3o4r1} does not hold in general for planar, connected graphs of radius $1$. For instance, starting from the graph consisting of $n\geq 4$ triangles with the common vertex $u$, we add edges between a vertex $\neq u$ of one of the triangles and one vertex $\neq u$ of $m$ of the other triangles, where $4\leq m\leq n$. Then the resulting planar, connected graph of radius $1$ satisfies $A=\{1,2,m+1\}$.

\section{Completing the classification of types of polyhedra}
\label{sec:compl}
The goal of the present section is to prove that the list of types of polyhedra with $1\in A$ in the first column of Table \ref{t:2} is complete. To achieve this, we require one last key result, classifying all polyhedra such that $0,3,4\not\in A$ and $\ell\in A$ for some $\ell\geq 5$.
\begin{prop}
	\label{prop:034}
Let $G$ be a polyhedral graph. We have $0,3,4\not\in A$ and $\ell\in A$ such that $\ell\geq 5$ if and only if either $G\simeq B_\ell$ or $G\simeq B'_\ell$, where $\ell$ is even.
\end{prop}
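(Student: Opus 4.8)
The plan is to prove both directions of the equivalence, with the forward implication carrying essentially all of the difficulty. For the backward direction I would simply invoke the computation $A(B_\ell)=A(B'_\ell)=\{1,2,\ell\}$ recorded after Definition \ref{def:B}: for even $\ell\geq 6$ this gives $0,3,4\notin A$ and $\ell\in A$ with $\ell\geq 5$, so both families meet the hypotheses. For the forward direction, I would first pin down the global geometry. Since $0\notin A$, Lemma \ref{le:0} forces $\diam(G)\leq 2$; and if $\rad(G)=1$ then Lemma \ref{le:3o4r1} applied with $a=\ell\geq 5$ would place $3$ or $4$ in $A$, contrary to hypothesis. Hence $\rad(G)=\diam(G)=2$.

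Next I would fix $u,v$ with $|N(u,v)|=\ell\geq 5$ and let $w_1,\dots,w_\ell$ be their common neighbours in the cyclic order of the planar embedding of the $K(2,\ell)$ they span; this skeleton divides the sphere into $\ell$ quadrilateral lens regions $R_i$ bounded by $u\,w_i\,v\,w_{i+1}$, and every remaining vertex lies in the interior of some $R_i$. Two facts would be used throughout: no interior vertex is adjacent to both $u$ and $v$ (all common neighbours of $u,v$ are the $w_i$), and $u\not\sim v$. The latter I prove by contradiction: if $uv\in E$ its edge occupies a single lens, whose two sides carry no rim connection; since $3$-connectivity makes $G-u-v$ connected, every other lens must then be non-empty and so contribute, for each interior $w_i$, a distinct common neighbour of $u$ and $w_i$. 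Together with $v\in N(u,w_i)$ this gives $|N(u,w_i)|\geq 3$, i.e. $3\in A$ or $4\in A$, a contradiction.

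The heart of the argument, and the step I expect to be the main obstacle, is the lens-by-lens classification. Using $\diam(G)=2$ I would show a lens holds at most two interior vertices (a chain $w_i$–$p_1$–$p_2$–$p_3$–$w_{i+1}$ leaves a middle vertex at distance $3$ from the opposite pole), and using $3\notin A$ I would exclude a lens with exactly one interior vertex $p$ (it forces $N(w_i,w_{i+1})\supseteq\{u,v,p\}$) as well as two consecutive lenses carrying the chords $w_iw_{i+1}$, $w_{i+1}w_{i+2}$ (forcing $N(w_i,w_{i+2})\supseteq\{u,v,w_{i+1}\}$). This leaves each lens of one of three types: \emph{tight} ($w_iw_{i+1}\in E$, two triangles), \emph{loose} (two interior vertices attached to a single pole, giving a fan and a pentagonal face), or \emph{empty} (a quadrilateral face). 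I would then show all loose lenses attach to the same pole: otherwise one locates an interior vertex attached to $u$ and one attached to $v$ having no common neighbour, hence at distance $3$, forcing $0\in A$. Naming this common pole $u$, it becomes a hub adjacent to every rim vertex, while $v$ is adjacent only to $w_1,\dots,w_\ell$.

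Finally I would assemble the global picture and extract the parity. Because $v$ meets the rim only in the $w_i$, each $w_i$ needs a neighbour among the $w_j$ to have any common neighbour with $v$ (otherwise $N(v,w_i)=\emptyset$ and $0\in A$); such adjacencies arise exactly from tight lenses, which cannot be consecutive. Hence the tight lenses pair the $w_i$ into a perfect matching of consecutive pairs around the $\ell$-cycle, which exists only when $\ell$ is \emph{even}. The lenses between matched pairs are loose or empty, and connectivity of $G-u-v$ permits at most one empty lens: with none the rim closes into a cycle and $G\simeq B_\ell$, with exactly one the rim is a path and $G\simeq B'_\ell$. The delicate points still to be handled carefully are ruling out interior vertices adjacent to neither pole and lenses that are loose on both sides, and verifying that the reconstructed graph is genuinely isomorphic to $B_\ell$ or $B'_\ell$ rather than merely sharing its local lens types.
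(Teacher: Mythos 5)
Your global strategy (reduce to $\rad(G)=\diam(G)=2$, decompose the sphere into the $\ell$ lenses of the $K(2,\ell)$ spanned by $u,v,w_1,\dots,w_\ell$, classify lenses, extract parity) does arrive at the correct structure, and your opening step is even slightly cleaner than the paper's: you rule out radius $1$ immediately from Lemma \ref{le:3o4r1}, where the paper invokes that lemma only later. However, there is a recurring logical gap, and it is fatal exactly where you lean on it hardest. Several times you argue that a pair with at least $3$ common neighbours contradicts $3,4\not\in A$ (``this gives $|N(u,w_i)|\geq 3$, i.e.\ $3\in A$ or $4\in A$, a contradiction''; similarly for the lens with one interior vertex, and for two consecutive chords). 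This is a non sequitur: since $3,4\not\in A$, such a pair merely has at least $5$ common neighbours, which the hypothesis allows. Every one of these steps needs a supplementary planarity argument capping the count, and the paper supplies it each time (e.g.\ ``$N(w_i,w_{i+2})\supseteq\{u,v,w_{i+1}\}$, thus by planarity $N(w_i,w_{i+2})=\{u,v,w_{i+1}\}$, impossible'', and again when it shows $|N(w_{i+1},w_{i+2})|\geq\ell$ forces at least two further vertices that planarity cannot accommodate). In your proof of $u\not\sim v$ the problem is compounded: the premise is also unjustified, because the $w_j$--$w_{j+1}$ connection that $3$-connectivity forces in each lens may consist of vertices attached to $v$ rather than to $u$, so it need not contribute any common neighbour of $u$ and $w_j$ at all. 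The paper avoids this entirely by first proving $V_y=\emptyset$ via the path structure, after which $xy\in E(G)$ would make $x$ dominating, i.e.\ give radius $1$, contradicting Lemma \ref{le:3o4r1}.

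A second genuine gap is that your lens classification presupposes that the interior of a lens is a chain from $w_i$ to $w_{i+1}$. Nothing you say excludes, for instance, a fan of several vertices all adjacent to both $u$ and one rim vertex $w_i$: each such vertex is at distance $2$ from $v$ (through $w_i$), so your distance-$3$ argument does not see it, and the pair $(u,w_i)$ then simply has $\geq 5$ common neighbours, which by itself is not forbidden. Such configurations are indeed impossible, but killing them requires $3$-connectivity (an innermost fan pair is separated by $\{u,w_i\}$) combined with further planarity and counting analysis --- an argument you would have to add. On top of this, the domination statement (every vertex other than $u,v$ is adjacent to a pole), which you defer to the end as a ``delicate point'', must in fact come first, since the whole lens analysis silently uses it; it is proved as in the paper: a vertex in a lens adjacent to neither pole has empty common neighbourhood with $w_{i+3}$, using $\ell\geq 5$. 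Your endgame (tight lenses forming a perfect matching of consecutive rim pairs, hence $\ell$ even; at most one empty lens by connectivity of $G-u-v$; identification with $B_\ell$ or $B'_\ell$) and your converse direction are sound once the classification is secured, but as written the middle of the argument is not yet a proof.
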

\begin{proof}
Let $\ell$ be the smallest number satisfying $\ell\geq 5$ and $\ell\in A$, and $x,y\in V(G)$ such that $|N(x,y)|=\ell$. Call
\begin{equation}
	\label{eq:wi}
	w_1,w_2,\dots,w_\ell
\end{equation}
the common neighbours of $x,y$, appearing in this cyclic order around $x$ in the planar immersion of $G$, as in Figure \ref{fig:diam2a}. We consider the indices modulo $\ell$, and take $w_0:=w_\ell$. We claim that every vertex $\neq x,y$ of $G$ is adjacent to at least one of $x,y$, so that $\{x,y\}$ is also the dominating set prescribed in Lemma \ref{le:no0}. Indeed, by contradiction let $z\not\in N(x),N(y)$ lie inside the cycle
\[u,w_i,v,w_{i+1}.\]
Then by planarity $N(z,w_{i+3})=\emptyset$, contradiction.
\begin{figure}[ht]
	\centering
	\begin{subfigure}{0.48\textwidth}
		\centering
		\includegraphics[width=7cm]{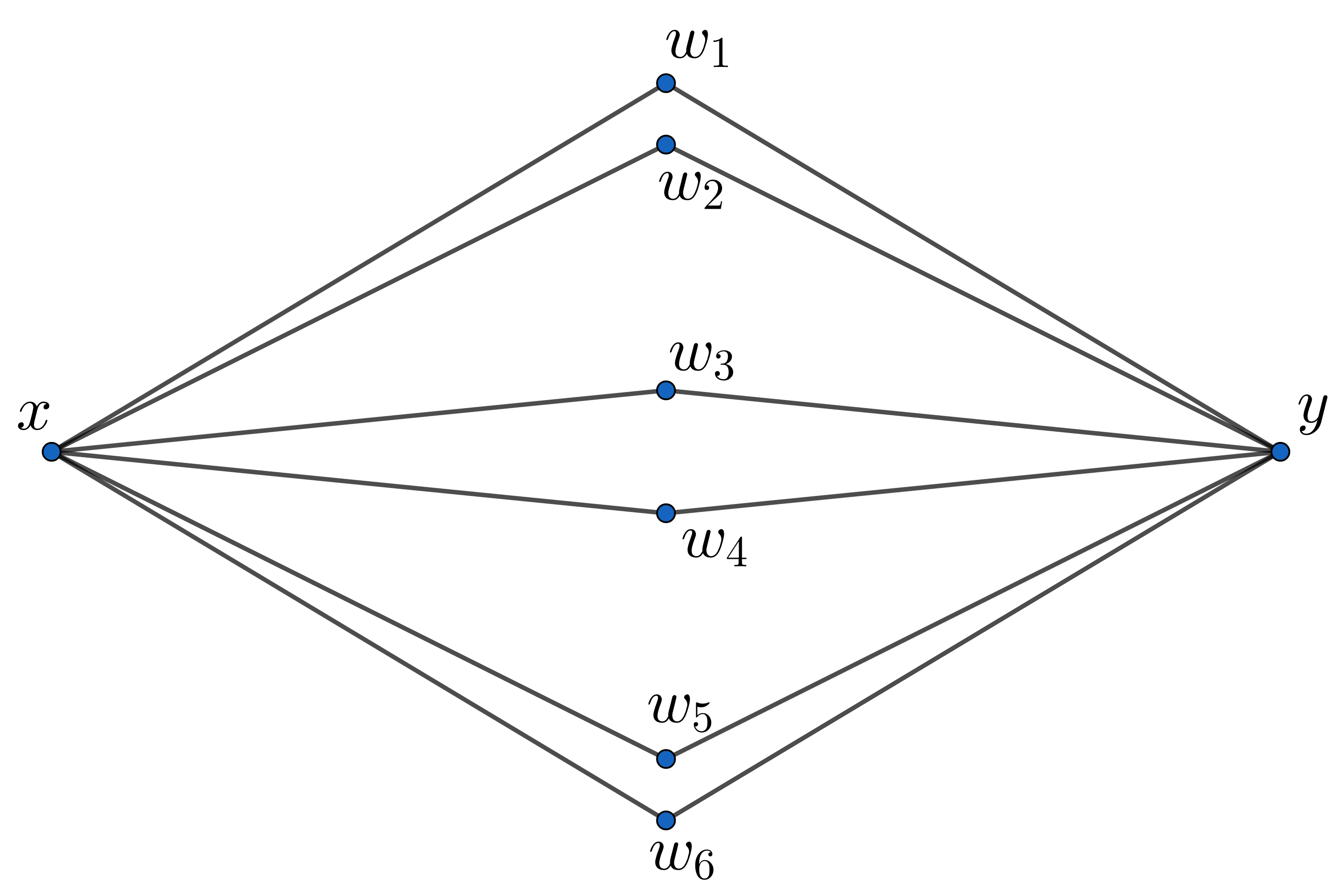}
		\caption{}
		\label{fig:diam2a}
	\end{subfigure}
	\hfill
	\begin{subfigure}{0.48\textwidth}
		\centering
		\includegraphics[width=7cm]{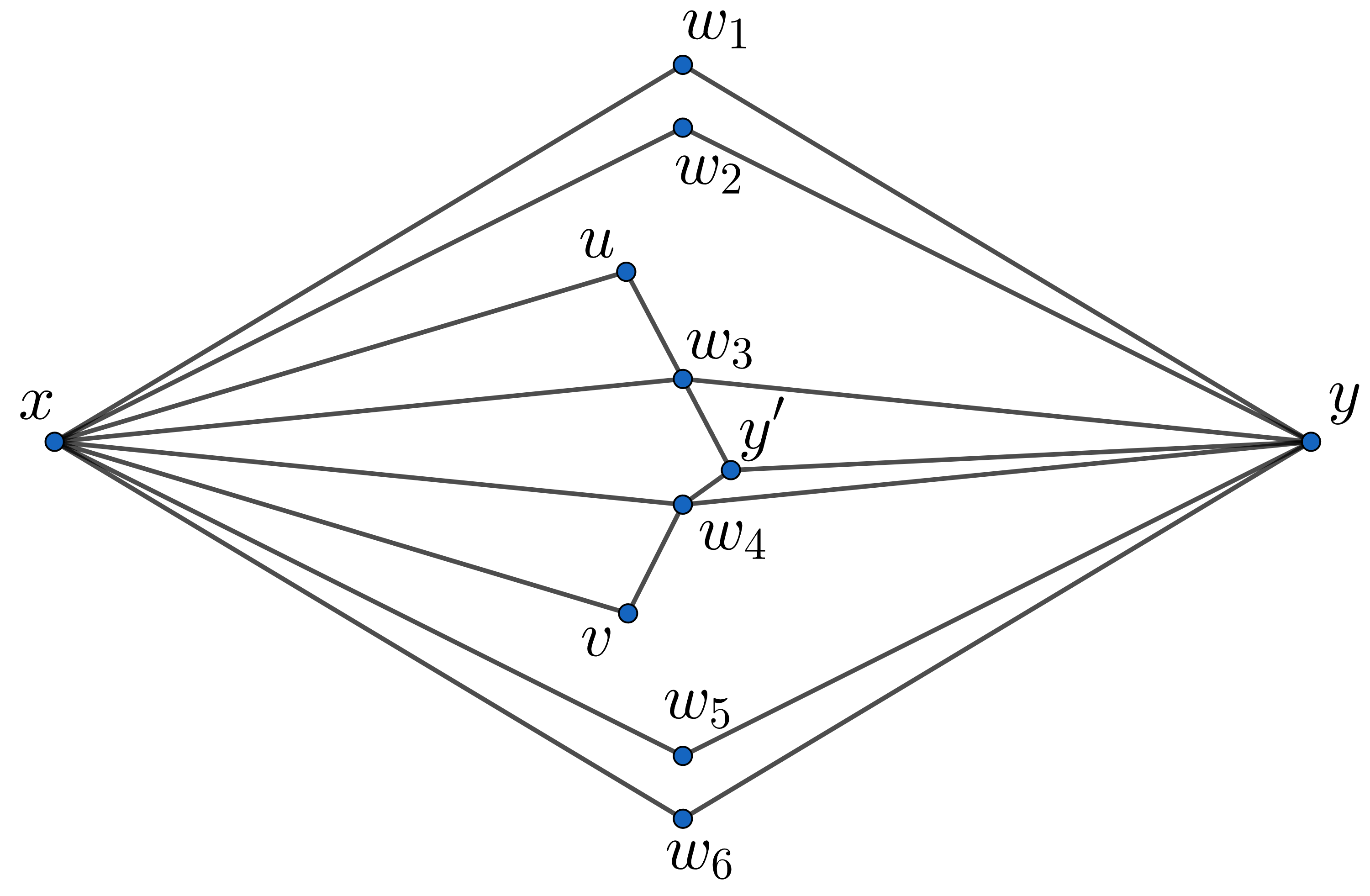}
		\caption{}
		\label{fig:diam2b}
	\end{subfigure}
	\caption{Proposition \ref{prop:034}.}
	\label{fig:diam2}
\end{figure}

As in Lemma \ref{le:no0}, call $V_x$ the subset of vertices $\neq x,y$ adjacent to $x$ but not $y$, and $V_y$ the subset of vertices $\neq x,y$ adjacent to $y$ but not $x$. As $G$ is $3$-connected, then $G-u-v$ is connected, thus there exists a path $P$ in $G-u-v$ containing all of \eqref{eq:wi}. By planarity, w.l.o.g.\ we may assume that \eqref{eq:wi} appear along $P$ in this order.

Along $P$, for every $i\geq 1$ the vertices $w_i,w_{i+1},w_{i+2}$ cannot be consecutive, otherwise 
\[
N(w_i,w_{i+2})\supseteq\{u,v,w_{i+1}\}
\]
thus by planarity $N(w_i,w_{i+2})=\{u,v,w_{i+1}\}$, impossible. Hence along $P$ either
\[w_1,u,w_2,w_3,v,w_4,\dots,w_\ell\]
appear in this order, or
\[w_1,w_2,u,w_3,w_4,v,w_5,\dots,w_\ell\]
appear in this order. In either case, $N(u,v)$ is either $\{x\}$ or $\{y\}$, say the former.

We claim that $V_y=\emptyset$. By contradiction, let $y'\in V_y$. Since $N(y',u)$ and $N(y',v)$ are not empty, then 
there exists $i\in\{1,2\}$ such that
\[w_i,u,w_{i+1},y',w_{i+2},v,w_{i+3}\]
appear in this order along $P$, with
\[uw_{i+1}, \ w_{i+1}y', \ y'w_{i+2}, \ w_{i+2}v\in E(G),\]
as in Figure \ref{fig:diam2b}. Hence
\[N(w_{i+1},w_{i+2})\supseteq\{x,y,y'\},\]
so that $|N(w_{i+1},w_{i+2})|\geq\ell\geq 5$. Therefore, there are at least two other vertices adjacent to $w_{i+1},w_{i+2}$ and to one of $x,y$. However, by planarity this cannot happen. Hence indeed $V_y=\emptyset$.

Since $V_y=\emptyset$, we deduce that $xy\not\in E(G)$. Indeed, assuming $xy\in E(G)$ we have $\ecc(x)=1$, so that by Lemma \ref{le:3o4r1} either $3$ or $4$ belongs to $A$, contradiction.

Next, we claim that for $1\leq i\leq\ell-1$ along $P$ there are either $0$ or $2$ vertices between $w_i$ and $w_{i+1}$. For each $i\geq 1$, we have $N(w_i,w_{i+1})=\{x,y\}$, since $|N(w_i,w_{i+1})|\geq\ell$ is impossible by planarity, as there would need to be at least three vertices adjacent to all of $x,w_i,w_{i+1}$. It follows that for $i\geq 1$ there cannot be exactly one vertex between $w_i$ and $w_{i+1}$ along $P$. Now assume that
\[w_i,u_1,u_2,u_3,w_{i+1}\]
appear in order. Since $N(u_2,y)\neq\emptyset$ and $xy\not\in E(G)$, then $u_2$ is adjacent to one of $w_i,w_{i+1}$, say $w_i$. Then
\[N(x,u_2)\supseteq\{w_i,u_1,u_3\},\]
thus $|N(x,u_2)|\geq\ell$. By planarity, any element of $N(x,u_2)$ other than $w_i,u_1,u_3$ lies inside the cycle $x,u_1,u_2,u_3$, and as such, this element has no common neighbours with $y$, contradiction. Therefore indeed for $1\leq i\leq\ell-1$ along $P$ there are either $0$ or $2$ vertices between $w_i$ and $w_{i+1}$.

Next, for every $i\geq 1$ since $N(y,w_i)\neq\emptyset$, then $w_i$ is adjacent to at least one of $w_{i-1}$ and $w_{i+1}$. As we have already seen that these three vertices cannot be consecutive along $P$, then in fact $w_i$ is adjacent to exactly one of $w_{i-1}$ and $w_{i+1}$.

Now if
\[w_i,u_1,u_2,w_{i+1}\]
are consecutive along $P$, then $w_iw_{i+1}\not\in E(G)$, else $N(w_i,u_2)=\{x,u_1,w_{i+1}\}$. It follows that for every $1\leq i\leq\ell-1$, along $P$ between $w_i$ and $w_{i+1}$ there are exactly two vertices if and only if $w_iw_{i+1}\not\in E(G)$, otherwise there are none.

All our considerations so far imply that either
\[P: w_1,w_2,u_1,u_2,w_3,w_4,u_3,u_4,\dots,w_{\ell-1},w_\ell,u_{\ell-1},u_\ell,\]
and moreover $u_\ell w_1\in E(G)$, or
\[P: w_1,w_2,u_1,u_2,w_3,w_4,u_3,u_4,\dots,w_{\ell-1},w_\ell,\]
and moreover $w_\ell w_1\not\in E(G)$. In either case, $\ell$ is even. The graph constructed so far is isomorphic to $B_\ell$ or $B'_\ell$, and it remains to show that in either case there are no more vertices and/or edges in $G$.

If we wish to add an edge without introducing new vertices, then by planarity this edge is of the form $u_iw_i$, $i$ even, or $u_iw_{i+2}$, $i$ odd. If $i$ is even, then $N(w_i,w_{i+1})=\{x,y,u_i\}$, while if $i$ is odd, then $N(w_{i+1},w_{i+2})=\{x,y,u_i\}$, impossible in either case. We now add a new vertex $z$. As we have seen, $zx\in E(G)$, and for $N(z,y)$ not to be empty, $z$ is also adjacent to an element of \eqref{eq:wi}, say $w_2$. Now
\[N(x,w_2)\supseteq\{w_1,u_1,z\},\]
thus $|N(x,w_2)|\geq\ell$, impossible.  
Therefore finally $G$ is isomorphic either to $B_\ell$ or to $B'_\ell$ with $\ell$ even.

Conversely, take an even $\ell\geq 6$. We note that $B_\ell$ and $B'_\ell$ of Definition \ref{def:B} are polyhedra, and $|N(u,v)|=\ell$. Moreover one has
\[|N(b_i,u)|=2, \qquad |N(b_i,v)|=1\]
and
\begin{equation*}
|N(b_i,b_j)|=
\begin{cases}
2&i,j \text{ both congruent to }1,2 \text{ modulo } 4,\text{ or } |j-i|\in\{2,2\ell-2\},\\
1&\text{ otherwise}
\end{cases}
\end{equation*}
for every $1\leq i\neq j\leq 2\ell$ in the case of $B_\ell$, and $1\leq i\neq j\leq 2\ell-2$ in the case of $B'_\ell$.
\end{proof}

We are ready to complete the proof for the classification of all types of polyhedra.
\begin{prop}
	\label{prop:class}
	Let $G$ be a polyhedron such that $1\in A(G)$. Then $A(G)$ appears in the first column of Table \ref{t:2}.
\end{prop}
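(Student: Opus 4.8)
The plan is to run a short case analysis on which of the small integers $0,2,3,4$ lie in $A=A(G)$, reducing each branch to a result already established. Two observations make the bookkeeping clean. First, $G$ is finite, so $A$ is a finite set; consequently, whenever $A$ contains $\{0,1,2\}$ (respectively $\{1,2,3\}$, $\{1,2,4\}$) together with at least one strictly larger integer, the set of remaining elements is automatically of the form $A'_3$ (respectively $A'_4$, $A'_5$). Second, by Lemma \ref{le:yes2} any $a\geq 3$ in $A$ forces $2\in A$, so $2$ plays the role of a gatekeeper: once we know whether $2\in A$, the admissible shapes of $A$ are tightly constrained.

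First I would treat the case $2\notin A$. Then no integer $\geq 3$ can lie in $A$, for otherwise Lemma \ref{le:yes2} would put $2$ in $A$; hence $A\subseteq\{0,1\}$. Since $1\in A$ and $A=\{1\}$ is impossible by Lemma \ref{le:1}, this leaves $A=\{0,1\}$, which is listed. From here on $2\in A$, so $A\supseteq\{1,2\}$, and I would split on whether $0\in A$. If $0\in A$ then $A\supseteq\{0,1,2\}$, and either $A=\{0,1,2\}$ or $A$ contains some integer $\geq 3$, giving $A=\{0,1,2\}\cup A'_3$ with $A'_3=A\cap\{3,4,\dots\}$; both appear in Table \ref{t:2}.

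The substantive case is $0\notin A$ (so that $\diam(G)\leq 2$ by Lemma \ref{le:0}, i.e.\ $G$ has radius $1$ or $2$), where I would split further on the membership of $3$ and $4$. If $3\in A$, then $A$ is $\{1,2,3\}$ or $\{1,2,3\}\cup A'_4$; if $3\notin A$ but $4\in A$, then $A$ is $\{1,2,4\}$ or $\{1,2,4\}\cup A'_5$; all four shapes are listed. In the final subcase $3,4\notin A$, either $A=\{1,2\}$, or some $\ell\geq 5$ belongs to $A$. This last branch is exactly the hypothesis of Proposition \ref{prop:034}: with $0,3,4\notin A$ and $\ell\geq 5$ in $A$, it forces $G\simeq B_\ell$ or $G\simeq B'_\ell$ with $\ell$ even, whence $A=\{1,2,\ell\}$ for even $\ell\geq 6$ — a listed type, which simultaneously rules out odd $\ell$ and any further elements of $A$.

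The main obstacle does not reside in the present proposition at all: every branch above closes in one line by the cited lemma, and the only genuinely hard structural work — identifying the families $B_\ell$ and $B'_\ell$ — is quarantined inside Proposition \ref{prop:034}. I would finish by noting that the branch $2\notin A$, the two branches under $0\in A$, and the four branches under $0\notin A$ are mutually exclusive and jointly exhaustive for a polyhedron with $1\in A$, and that the types they produce are precisely the entries of the first column of Table \ref{t:2} carrying $1\in A$.
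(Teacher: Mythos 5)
Your proof is correct and follows essentially the same route as the paper's: a case analysis on whether $2$, $0$, $3$, and $4$ lie in $A$, closing each branch with Lemma \ref{le:yes2}, Lemma \ref{le:1}, and Proposition \ref{prop:034}. The paper's version is merely terser, folding your four sub-branches of the case $0\notin A$, $2\in A$ into a single appeal to Proposition \ref{prop:034}.
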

\begin{proof}
	If $2\not\in A$, then by Lemma \ref{le:yes2} one has $A\subseteq\{0,1\}$. Since $A\neq\{1\}$ by Lemma \ref{le:1}, then $2\not\in A$ implies $A=\{0,1\}$. If $0,1,2\in A$, we obtain the types $\{0,1,2\}$ and $\{0,1,2\}\cup A'_3$.
	
	This leaves the case where $1,2\in A$ and $0\not\in A$. By Proposition \ref{prop:034}, the admissible types here are $\{1,2\}$, $\{1,2,3\}$, $\{1,2,3\}\cup A'_4$, $\{1,2,4\}$, $\{1,2,4\}\cup A'_5$, and $\{1,2,\ell\}$ for even $\ell\geq 6$.
\end{proof}

\section{Wide classes of solutions}
\label{sec:wide}
To complete the proof of Theorem \ref{thm:1}, it remains to construct infinitely many polyhedra for each of the types $\{1,2,3\}\cup A'_4$, $\{1,2,4\}\cup A'_5$, and $\{0,1,2\}\cup A'_3$, where $A'_n$ is a finite, non-empty set of integers $\geq n$.

\begin{prop}
	\label{prop:123}
For every (finite) non-empty set $A'_4$ of integers $\geq 4$, there are infinitely many polyhedra $G$ satisfying
\[A(G)=\{1,2,3\}\cup A'_4.\]
\end{prop}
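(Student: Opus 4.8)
The natural route is to realise every member of the family as a polyhedron of graph radius $1$ and to read off $A(G)$ from Proposition \ref{prop:arad1}. Recall that if $G\not\in\ks_1$ has radius $1$ with $u$ a vertex of eccentricity $1$, and if $G-u$ contains a $4$-cycle, then
\[A(G)=\{1,2,3\}\cup\{\deg_G(v)-1 : v\in V(G-u)\}=\{1,2,3\}\cup\{\deg_{G-u}(v) : v\in V(G-u)\},\]
the last equality holding because $u$ is adjacent to every other vertex. Thus, once $A'_4$ is fixed, the problem reduces to the purely combinatorial task of building, for infinitely many orders, a $2$-connected outerplanar graph $H$ (which will serve as $G-u$; recall from Lemma \ref{le:33} that the deleted-apex graph of a radius-$1$ polyhedron is exactly of this kind) such that $H$ contains a $4$-cycle and the set of vertex degrees of $H$ is exactly $\{2,3\}\cup A'_4$, with every element of $A'_4$ attained. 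The desired polyhedron is then $G=H+u$, the join of $H$ with a single apex, which is planar and $3$-connected since a universal vertex over a $2$-connected outerplanar graph yields a polyhedron.

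Writing $A'_4=\{a_1,\dots,a_k\}$, I would realise each $a_i$ by a \emph{fan}: a vertex $c_i$ joined to several consecutive boundary vertices, contributing one vertex of degree $a_i$ and, internally, only vertices of degree $3$, and automatically producing a $4$-cycle (two outer spokes together with two boundary edges). I would string the $k$ fans along one long outer cycle, separating and padding them with vertices of degree $2$ and $3$. The key point for infinitude is that inserting an extra degree-$2$ vertex $w$ by replacing a boundary edge $ab$ with a path $a\,w\,b$ leaves all other degrees unchanged, preserves $2$-connectivity, outerplanarity and the Hamiltonicity of the boundary, and only re-introduces the already-present value $2$ into $A(H+u)$; running this padding for arbitrarily large orders produces infinitely many pairwise non-isomorphic polyhedra, all with the same type. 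Since any $2$-connected outerplanar $H$ has minimum degree $2$ there is a vertex giving $2\in A$, the fan gives the $4$-cycle hence $3\in A$, and a pair of adjacent boundary vertices in a padded stretch whose only common neighbour is $u$ gives $1\in A$. This last fact also places $G$ outside $\ks_1$ by Theorem \ref{thm:1}, legitimising the use of Proposition \ref{prop:arad1}, which then pins $A(G)$ down to exactly $\{1,2,3\}\cup A'_4$.

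The step I expect to be delicate is the exact degree control: $H$ must avoid every degree outside $\{2,3\}\cup A'_4$, so the fan apices, the fan endpoints, and the connector vertices gluing consecutive fans must be arranged so that no vertex acquires, say, degree $4$ when $4\notin A'_4$, nor a degree exceeding $\max A'_4$. Making the fans interact only through degree-$2$ and degree-$3$ junctions, and verifying that the two endpoints of each fan close up with the correct degrees, is where the bookkeeping concentrates; this is precisely the sort of explicit assembly for which the code in Appendix \ref{app:a} is convenient. Everything else—planarity and $3$-connectivity of $G=H+u$, and the determination of $A(G)$—then follows formally from Lemma \ref{le:33} and Proposition \ref{prop:arad1} rather than from any direct enumeration of common neighbours.
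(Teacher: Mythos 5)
Your proposal is correct and is essentially the paper's own approach: both construct radius-$1$ polyhedra by adding chords to the base of a pyramid so that the degrees in $G-u$ realise exactly $\{2,3\}\cup A'_4$, then read off $A(G)$ from Lemma \ref{le:33} and Proposition \ref{prop:arad1} (with $1\in A$, hence $G\notin\ks_1$, certified by a pair of base vertices whose only common neighbour is the apex). The only differences are cosmetic: the paper encodes $A'_4$ by the central-path degrees of a caterpillar added as chords and obtains infinitely many solutions by repeating those degrees, whereas you use fans separated by degree-$2$ padding and grow the padding; both devices produce the same kind of chord structure and the same conclusion.
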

\begin{proof}
A caterpillar $T$ is a tree graph where deleting the vertices of degree $1$ results in a simple path (the `central path' of $T$). The degree sequence of a caterpillar $T$ is given by \cite[Proof of Lemma 2.1]{maffucci2024characterising}
\[t_1,t_2,\dots,t_m,1^{2+\sum_{i=1}^{m}(t_i-2)}\]
where the exponent means repeated values, $m\geq 1$, and $t_1,t_2,\dots,t_m\geq 2$ are the degrees of the vertices along the central path of $T$. If $m=1$ then in particular $T$ is a star graph. We also define
\[h(T):=6+\sum_{i=1}^{m}(t_i-1).\]

Given
\[A'_4=\{a_1,a_2,\dots,a_m\},\]
we wish to construct a polyhedron $G$ such that $A(G)=\{1,2,3\}\cup A'_4$. Considering the caterpillar $T$ of central vertex degrees
\[t_i=a_i-2, \qquad 1\leq i\leq m,\]
we start with an $h(T)$-gonal pyramid of apex $u$, and add the edges of $T$ to the base of the pyramid. This may be done so that the resulting graph $G$ is still simple and planar, as illustrated in Figure \ref{fig:123}. A similar construction may be found in \cite{maffucci2024characterising}, where the focus is on degree sequences with unique realisation up to isomorphism.
\begin{figure}[ht]
	\centering
	\begin{subfigure}{0.48\textwidth}
		\centering
		\includegraphics[width=4.5cm]{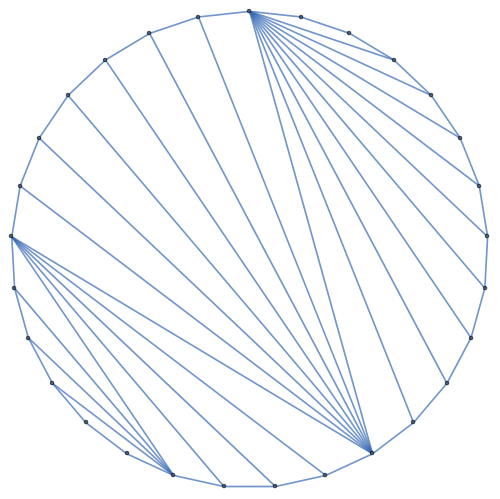}
		\caption{The graph $G-u$ for $A'_4=\{6,7,10,12\}$.}
		\label{fig:123a}
	\end{subfigure}
	\hspace{-1.cm}
	\begin{subfigure}{0.48\textwidth}
		\centering
		\includegraphics[width=3cm]{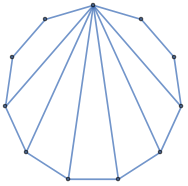}
		\caption{The graph $G-u$ for $A'_4=\{8\}$.}
		\label{fig:123b}
	\end{subfigure}
	\caption{Proposition \ref{prop:123}. For $A'_4=\{6,7,10,12\}$, we consider the caterpillar with central vertices of degrees $4,5,8,10$. If $A'_4=\{8\}$, we take the star with central vertex of degree $6$.}
	\label{fig:123}
\end{figure}

The vertex of $G-u$ corresponding to the vertex of degree $a_i-2$ in $T$ has degree $a_i$ in $G-u$ ($a_i+1$ in $G$) for $1\leq i\leq m$. According to Proposition \ref{prop:arad1}, since $G$ contains vertices of degree $4$, then $A(G)=\{1,2,3\}\cup A'_4$, as desired.

To obtain infinitely many polyhedra of such type we modify the above construction as follows. We consider caterpillars where the central vertices have degrees $t_1,t_2,\dots,t_{m'}$ where $m'\geq m$, each of $t_1,t_2,\dots,t_{m'}$ takes one of the values $a_1-2,a_2-2,\dots,a_m-2$, and each of $a_1-2,a_2-2,\dots,a_m-2$ appears at least once in $\{t_1,t_2,\dots,t_{m'}\}$.
\end{proof}
A possible code for Proposition \ref{prop:123}, in the Mathematica language, may be found in Appendix \ref{app:a}.

\begin{prop}
	\label{prop:124}
For every (finite) non-empty set $A'_5$ of integers $\geq 5$, there are infinitely many polyhedra $G$ satisfying
\[A(G)=\{1,2,4\}\cup A'_5.\]
\end{prop}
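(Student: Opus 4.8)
The plan is to adapt, essentially verbatim, the construction in the proof of Proposition \ref{prop:123}, replacing the role of vertices of degree $4$ (which forced $3\in A$ there) by vertices of degree $5$ (which will force $4\in A$ here instead of $3$). The target type is $\{1,2,4\}\cup A'_5$, so the crucial difference from the previous proposition is that we must now ensure $3\not\in A$ while $4\in A$. By Proposition \ref{prop:arad1}, for a radius-$1$ polyhedron $G$ with apex $u$, we have $3\in A$ precisely when $G-u$ contains a $4$-cycle. Thus the key structural requirement is to build $G-u$ with no $4$-cycles, while still realizing all the prescribed degrees from $A'_5$.

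The concrete construction I would carry out is as follows. Write $A'_5=\{a_1,a_2,\dots,a_m\}$ with each $a_i\geq 5$. First I would form a caterpillar $T$ whose central-path vertices have degrees $t_i=a_i-2$ for $1\leq i\leq m$ (note $t_i\geq 3$, as required for a caterpillar central vertex). Then, as in Figure \ref{fig:123}, I start with an $h(T)$-gonal pyramid of apex $u$ (using the same quantity $h(T)=6+\sum_{i=1}^m(t_i-1)$ guaranteeing enough room) and add the edges of $T$ to the base cycle so that the resulting graph $G$ is still simple and planar. Then the base-vertex corresponding to the degree-$(a_i-2)$ central vertex of $T$ acquires degree $a_i$ in $G-u$ (hence $a_i+1$ in $G$). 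By Proposition \ref{prop:arad1}, since the base of the pyramid contains vertices of degree $3$ (ensuring $2\in A$) and since $G-u$ is a tree with extra pendant structure, we read off $A(G)=\{1,2\}\cup\{\deg_G(v)-1 : v\in V(G-u)\}=\{1,2,4\}\cup A'_5$, provided we verify $G-u$ contains no $4$-cycle. Infinitely many examples then arise exactly as before, by allowing caterpillars with $m'\geq m$ central vertices whose degrees range over $\{a_1-2,\dots,a_m-2\}$ with each value occurring at least once.

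The step I expect to be the genuine obstacle, and the one deserving the most care, is verifying that $G-u$ has no $4$-cycle. Since $G-u$ is obtained from the base cycle $B$ of the pyramid by adding a caterpillar (a tree) on the vertices of $B$, any $4$-cycle in $G-u$ would have to use at least one edge of $B$; a tree alone has no cycles, so the potential $4$-cycles come from mixing base edges with caterpillar edges. I would argue that the planar layout of Figure \ref{fig:123} can be arranged so that no added caterpillar edge connects two base-vertices at base-distance exactly $2$ or $3$ along $B$, which is precisely the local configuration that would close up a $4$-cycle together with base edges. Concretely, I would space out the attachment points of the central path and its leaves along the large base cycle (this is the reason $h(T)$ is chosen generously), so that every added edge skips many base-vertices and no two added edges, together with a short arc of $B$, bound a quadrilateral. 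This is the analogue of the girth-avoidance that $\cw_4$ imposes in Proposition \ref{prop:rad1}, and once it is established, Proposition \ref{prop:arad1} delivers the conclusion immediately.

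I would close by remarking that, exactly as in Proposition \ref{prop:123}, the resulting graphs are genuine polyhedra: they are planar by construction, and $3$-connectivity follows from the fact that $G-u$ is $2$-connected (the base cycle survives, so removing $u$ and any one other vertex leaves a connected graph) together with the apex $u$ being adjacent to all base-vertices. A possible Mathematica implementation, parallel to that for Proposition \ref{prop:123}, would be placed in Appendix \ref{app:a}.
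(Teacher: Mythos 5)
There is a genuine gap: your construction neither gives $4\in A$ nor avoids $3\in A$. The problem is the leaves of the caterpillar, which you never address. A leaf of $T$ has degree $1$ in $T$, hence degree $3$ in $G-u$ (one caterpillar edge plus two base-cycle edges) and degree $4$ in $G$. By Proposition \ref{prop:arad1}, every such vertex contributes $\deg_G(v)-1=3$ to $A(G)$, so $3\in A(G)$ no matter how carefully you avoid $4$-cycles in $G-u$: avoiding $4$-cycles removes only one of the two sources of the value $3$, not the one coming from degrees. Moreover, the degrees in your graph $G$ are $a_i+1\geq 6$ at the central vertices, $4$ at the leaves, and $3$ at the remaining base vertices, so
\[\{\deg_G(v)-1 : v\in V(G-u)\}=\{2,3\}\cup A'_5,\]
and your displayed equality $\{1,2\}\cup\{\deg_G(v)-1 : v\in V(G-u)\}=\{1,2,4\}\cup A'_5$ fails on both counts: $3$ lies in the left-hand set and $4$ does not (no vertex of $G$ has degree $5$). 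What you actually construct, granting the $4$-cycle avoidance, is a polyhedron of type $\{1,2,3\}\cup A'_5$, i.e., an instance of Proposition \ref{prop:123}, not of Proposition \ref{prop:124}.

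The paper's proof fixes exactly this point by degree surgery on the leaves: after adding the caterpillar, it inserts extra edges pairing up the leaf-vertices so that each leaf-vertex is incident to exactly one new edge. This raises every leaf's degree in $G$ from $4$ to $5$, which simultaneously eliminates the unwanted value $3$ and produces the required value $4$. Pairing the leaves requires their number, $2+\sum_i(t_i-2)$, to be even, which is why the paper duplicates $t_{m+1}=a_j-2$ for the smallest odd $a_j$ whenever $A'_5$ contains an odd number of odd elements --- a parity adjustment absent from your proposal. Finally, since these new leaf-to-leaf edges can themselves create $4$-cycles in $G-u$ (for instance, two leaves of the same central vertex joined to each other), the paper also inserts extra vertices along the base (these get degree $3$ in $G$, harmlessly contributing $2$ to $A$) so as to destroy all $4$-cycles in $G-u$; only then does Proposition \ref{prop:arad1} yield $A(G)=\{1,2,4\}\cup A'_5$. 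Your instinct that $4$-cycle avoidance matters is correct, but it is the easier half of the argument; the modification of the leaf degrees is the essential missing idea.
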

\begin{proof}
We begin with a construction similar to Proposition \ref{prop:123}. Given
\[A'_5=\{a_1,a_2,\dots,a_m\},\]
we write $t_i=a_i-2$, $1\leq i\leq m$. If $A'_5$ contains an odd number of odd elements, we also take $t_{m+1}=a_j-2$, where $a_j$ is the smallest odd element of $A'_5$. We then consider the caterpillar $T$ of such central vertex degrees, and add its edges to the $h(T)$-gonal pyramid of apex $u$.

Next, we insert extra edges between vertices corresponding to vertices of degree $1$ in the caterpillar, so that each such vertex is incident to exactly one new edge, and we also insert extra vertices along the base of the pyramid, while keeping the resulting graph simple and planar. The extra edges are inserted so that there are no vertices of degree $3$ in $G-u$ (i.e., degree $4$ in $G$), and so that there are vertices of degree $4$ in $G-u$ (i.e., degree $5$ in $G$). The extra vertices are inserted so that there are no $4$-cycles in $G-u$. Therefore, by Proposition \ref{prop:arad1} the constructed polyhedron $G$ satisfies $A(G)=\{1,2,4\}\cup A'_5$.
\begin{figure}[ht]
	\centering
	\includegraphics[width=6.cm]{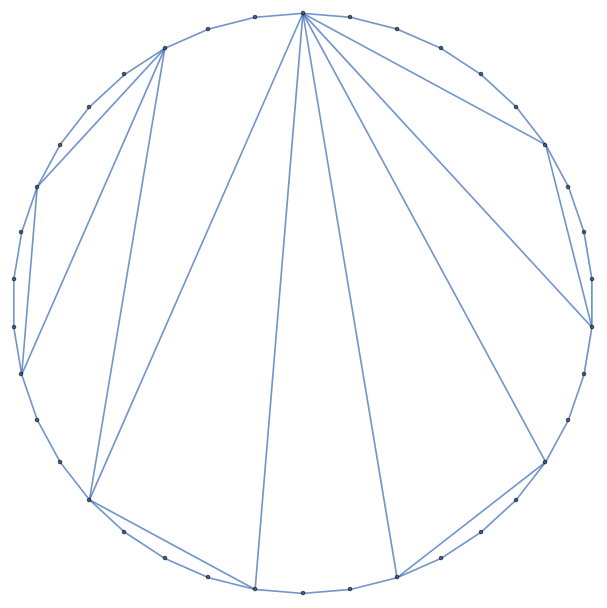}
	\caption{Proposition \ref{prop:124}: the graph $G-u$ for $A'_5=\{5,8\}$. The central vertices of the caterpillar have degrees $3,3,6$. The $3$ is repeated since $A'_5$ contains an odd number of odd elements.}
	\label{fig:124}
\end{figure}

As in Proposition \ref{prop:123}, we find infinite families of solutions by repeating and/or permuting the values of the central vertex degrees in the caterpillars.
\end{proof}

A possible code for Proposition \ref{prop:124}, in the Mathematica language, may be found in Appendix \ref{app:a}.

\begin{prop}
	\label{prop:012}
For every (finite) non-empty set $A'_3$ of integers $\geq 3$, there are infinitely many polyhedra $G$ satisfying
\[A(G)=\{0,1,2\}\cup A'_3.\]
\end{prop}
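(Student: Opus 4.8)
The plan is to extend the two previous constructions to produce a polyhedron with $A=\{0,1,2\}\cup A'_3$. The essential new feature compared with Propositions \ref{prop:123} and \ref{prop:124} is that we now want $0\in A$, which by Lemma \ref{le:0} means the graph must have diameter $\geq 3$. Thus the graph can no longer have a vertex of eccentricity $1$, so the clean description via $\Gamma_u(G)$ and Proposition \ref{prop:arad1} is unavailable and we must control the common-neighbour counts by hand. Since $A'_3$ may contain the value $3$ or $4$, we cannot always appeal to the radius-$1$ machinery at all, and we need $2\in A$ (a $4$-cycle must be present, by Corollary \ref{cor:2}) together with $1\in A$ (a pair of adjacent or non-adjacent vertices with a unique common neighbour). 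The main idea is to take a long ``tube''-like or path-of-wheels polyhedron that is locally similar to the radius-$1$ examples but is globally stretched out so that two far-apart vertices have no common neighbour.

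First I would write $A'_3=\{a_1,\dots,a_m\}$ and build a base structure consisting of a long path (or cycle) of vertices to which a caterpillar's edges are attached, exactly as in Propositions \ref{prop:123} and \ref{prop:124}, so that the degrees $a_1,\dots,a_m$ appear as the neighbour-counts $|N(x,y)|$ for suitable pairs, guaranteeing $A'_3\subseteq A$. The difference is that instead of coning everything to a single apex $u$, I would replace the single apex by a \emph{path} of apex-like vertices, so that the graph resembles a prism or a sequence of pyramids glued along their bases: locally each region still forces $4$-cycles (giving $2\in A$) and unique common neighbours (giving $1\in A$), but the two endpoints of the long direction are at distance $\geq 3$, forcing $0\in A$. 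One must then verify $3$-connectivity and planarity of the resulting graph, which is routine for such layered constructions, and exhibit an explicit planar embedding as in Figures \ref{fig:123} and \ref{fig:124}.

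The key verification is the computation of $A(G)$. I would partition the vertex pairs into cases by their location in the construction: pairs within a single ``local'' gadget realise the values $1,2$ and the prescribed degrees $a_i$ (so $\{1,2\}\cup A'_3\subseteq A$), while pairs lying in different gadgets far along the tube realise $0$ (so $0\in A$), and crucially no pair realises a value outside $\{0,1,2\}\cup A'_3$. The last point is the delicate one: I must ensure that the gluing introduces no unintended $K(2,n)$ with $n$ not among the $a_i$, i.e.\ no pair of vertices acquires a common-neighbour count that is not already on the list. This is controlled by inserting enough ``spacer'' vertices (as in the no-$4$-cycle requirement of Proposition \ref{prop:124}) between consecutive high-degree vertices and by keeping the attached caterpillar edges independent, so that the only large common-neighbour counts come from the intended pairs. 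As before, infinitely many non-isomorphic solutions are obtained by repeating and permuting the caterpillar's central-vertex degrees, and/or by lengthening the tube, since this changes the order $p$ without changing $A$.

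The step I expect to be the main obstacle is precisely this global control: proving that replacing the single dominating apex by an extended structure does not secretly create new common-neighbour values or destroy $3$-connectivity. In the radius-$1$ setting Proposition \ref{prop:arad1} did all this bookkeeping automatically, but here every pair of vertices near the ``seams'' where gadgets are glued must be checked directly by a planarity argument, analogous to the repeated ``by planarity, $N(\cdot,\cdot)=\dots$'' deductions used throughout Section \ref{sec:compl}. I would organise this as a finite case analysis on the relative positions of the two vertices, using planarity to bound the number of common neighbours from above and the explicit adjacencies to realise the required values exactly.
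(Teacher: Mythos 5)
Your strategy is sound and, at the top level, coincides with the paper's: both proofs chain together local gadgets so that the prescribed values in $A'_3$ are realised inside individual gadgets, the values $0,1,2$ come from the local structure and from far-apart pairs, and infinitely many non-isomorphic solutions follow by repeating gadgets or lengthening the chain. The implementations differ, though. The paper abandons the apex-degree mechanism entirely: it introduces explicit polyhedra $G_{i,a_i}$ (Figure \ref{fig:012}), each containing a designated pair $u_{i,a_i},v_{i,a_i}$ with exactly $a_i$ common neighbours, and glues consecutive gadgets along quadrangular faces. This buys a purely local verification: each $a_i$ is witnessed inside its own gadget, the seams are single $4$-faces, every other pair has at most $2$ common neighbours by construction, and $0\in A$ already holds within one gadget (so even $m=1$ needs no chaining). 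Your route instead retains the caterpillar-degree mechanism of Propositions \ref{prop:123} and \ref{prop:124} over a ``path of apexes''. That can be made to work, but one step cannot be imported ``exactly as in'' those propositions: the identity $|N(v,u)|=\deg_G(v)-1$ (Lemma \ref{le:nrad1}, Proposition \ref{prop:arad1}) is a radius-$1$ fact, and in your tube it survives only if the entire caterpillar star of each high-degree vertex lies inside the stretch of the base dominated by a single apex; you must then separately check the seam pairs (consecutive apexes, base vertices shared between stretches) so that no unintended value such as $3$ appears when $3\notin A'_3$. You do flag this bookkeeping as the main obstacle, which is the right instinct, but it is the substance of the proof in your approach, whereas the paper's face-gluing design makes it essentially automatic. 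A small logical slip to fix: Lemma \ref{le:0} says $\diam(G)\geq 3$ implies $0\in A$, not the converse (an edge lying on no triangle also gives $0\in A$ in a diameter-$2$ graph), so forcing diameter $\geq 3$ is a legitimate sufficient route --- the one both constructions take --- but it is not a necessity you can infer from $0\in A$.
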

\begin{proof}
We consider the polyhedral graphs $G_{i,j}$ in Figure \ref{fig:012}, where $i\geq 1$ and
\[j=|N(u_{i,j},v_{i,j})|\geq 3.\]
\begin{figure}[ht]
	\centering
	\includegraphics[width=6.cm]{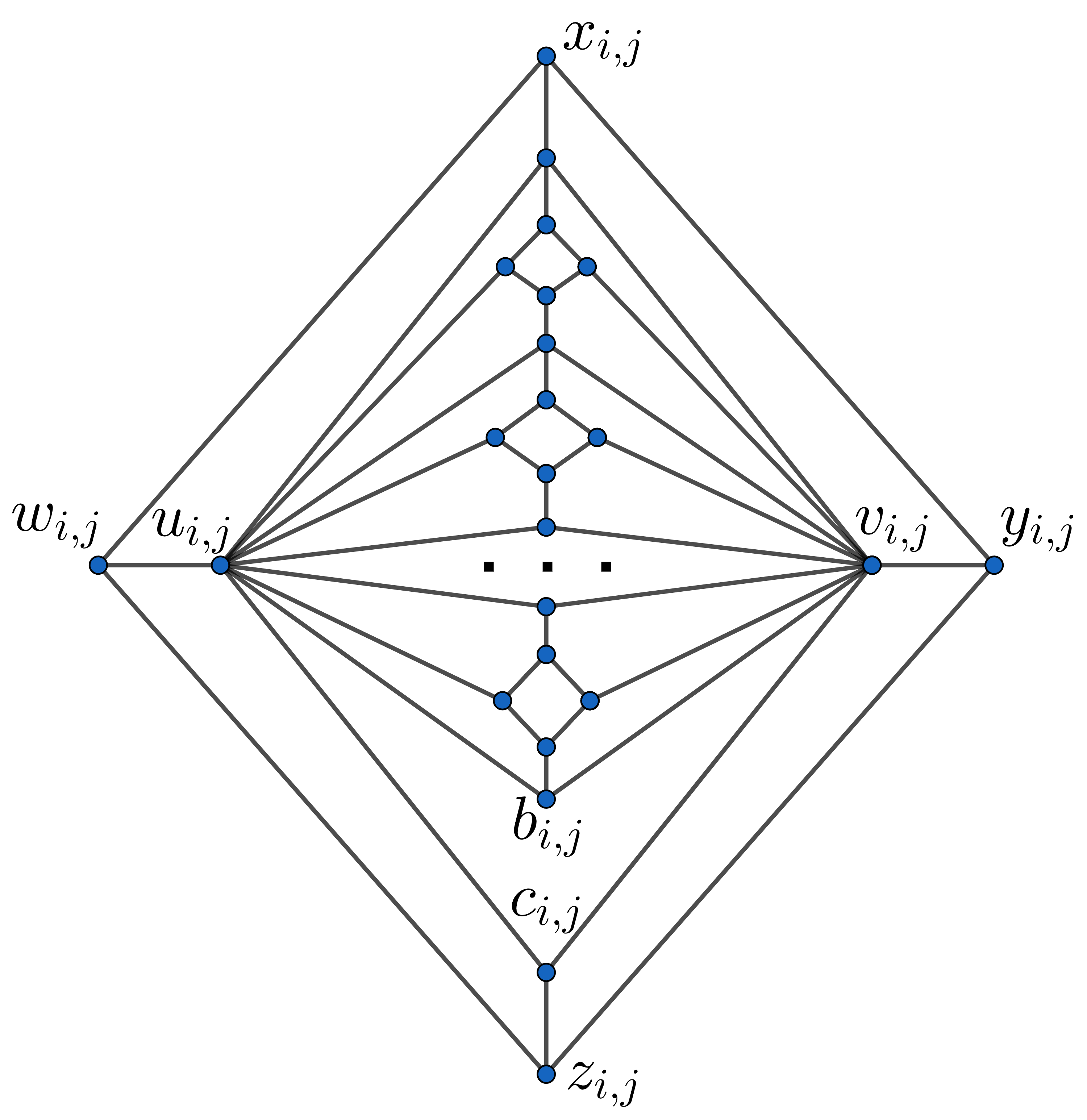}
	\caption{$G_{i,j}$.}
	\label{fig:012}
\end{figure}

Writing
\[A'_3=\{a_1,a_2,\dots,a_m\},\]
we take the graphs
\begin{equation}
\label{eq:Gi}
G_{1,a_1},G_{2,a_2},\dots,G_{m,a_m}.
\end{equation}
For $1\leq i\leq m-1$, we identify (i.e, `glue') the region
\[w_{i,a_i},x_{i,a_i},y_{i,a_i},z_{i,a_i},\]
of $G_{i,a_i}$ with the region 
\[u_{i+1,a_{i+1}},b_{i+1,a_{i+1}},v_{i+1,a_{i+1}},c_{i+1,a_{i+1}},\]
of $G_{i+1,a_{i+1}}$.

The resulting graph $G$ is a polyhedron, and
\[|N(u_{i,a_i},v_{i,a_i})|=a_i, \qquad 1\leq i\leq m,\]
while by construction all remaining pairs of vertices in $G$ have $0$, $1$, or $2$ common neighbours, so that
\[A(G)=\{0,1,2,a_1,a_2,\dots,a_m\},\]
as required.

To obtain infinitely many polyhedra of such type we modify the above construction as follows. Instead of \eqref{eq:Gi} we consider
\[G_{1,\alpha_1},G_{2,\alpha_2},\dots,G_{m',\alpha_{m'}}\]
where $m'\geq m$ is chosen arbitrarily, $\alpha_1,\alpha_2,\dots,\alpha_{m'}\in A'_3$, and each element of $A'_3$ appears at least once in $\{\alpha_1,\alpha_2,\dots,\alpha_{m'}\}$.
\end{proof}

\begin{proof}[Collecting the results for Theorem \ref{thm:2}]
Let us recollect the intermediate results that allowed us to prove Theorem \ref{thm:2}. They are: Corollary \ref{cor:01} (classification for the type $\{0,1\}$), Corollary \ref{cor:1234} (classification for $\{1,2\}$, $\{1,2,3\}$, and $\{1,2,4\}$), Corollary \ref{cor:012} (classification for the type $\{0,1,2\}$ when there are at least $25$ vertices), Proposition \ref{prop:034} (classification for $\{1,2,\ell\}$, even $\ell\geq 6$), Proposition \ref{prop:class} (proof that the list of types in Table \ref{t:2} is complete), Propositions \ref{prop:123}, \ref{prop:124} and \ref{prop:012} (respectively constructing infinitely many solutions for the types $\{1,2,3\}\cup A'_4$, $\{1,2,4\}\cup A'_5$, and $\{0,1,2\}\cup A'_3$).
\end{proof}

\paragraph{Future directions.} Given a class of (finite or infinite) graphs $\cg$, one may ask for which sets of non-negative integers $A$ there are infinitely many/finitely many/no elements in $\cg$ of type $A$. A more ambitious question is to classify the elements of $\cg$ according to their types. The quantity `collection of numbers of common neighbours for pairs (or more general subsets) of vertices' has vast potential for application in the sciences.

\appendix
\section{Code for Propositions \ref{prop:123} and \ref{prop:124}}
\label{app:a}

Figure \ref{fig:123code} presents possible code for Proposition \ref{prop:123}. The function called `dez' computes $A(G)$ for a graph $G$. The function `tr' initialises the vertices of the caterpillar for the inputted $A'_4$, `n' yields the base length of the pyramid, `ed' adds edges for one central vertex of the caterpillar, `de' repeats `ed' once for every central vertex of the caterpillar, `fi' deletes any duplicated edges, `fu' completes the construction of $G$, and `te' tests the output.
\begin{figure}[ht]
	\centering
	\includegraphics[width=\textwidth]{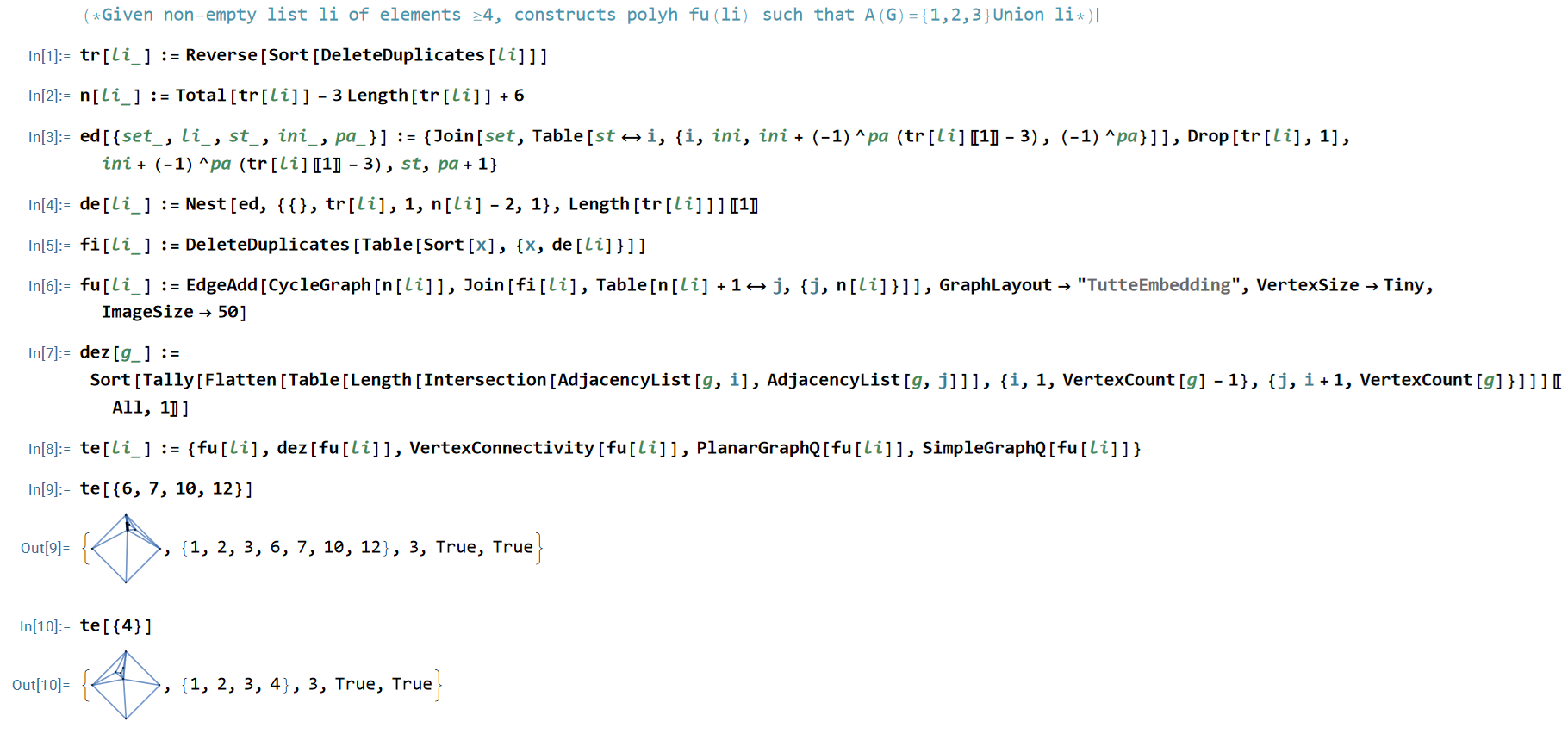}
	\caption{Code for Proposition \ref{prop:123}, in the Mathematica language.}
	\label{fig:123code}
\end{figure}

Figure \ref{fig:124code} presents possible code for Proposition \ref{prop:124}.
\begin{figure}[ht]
	\centering
	\includegraphics[width=\textwidth]{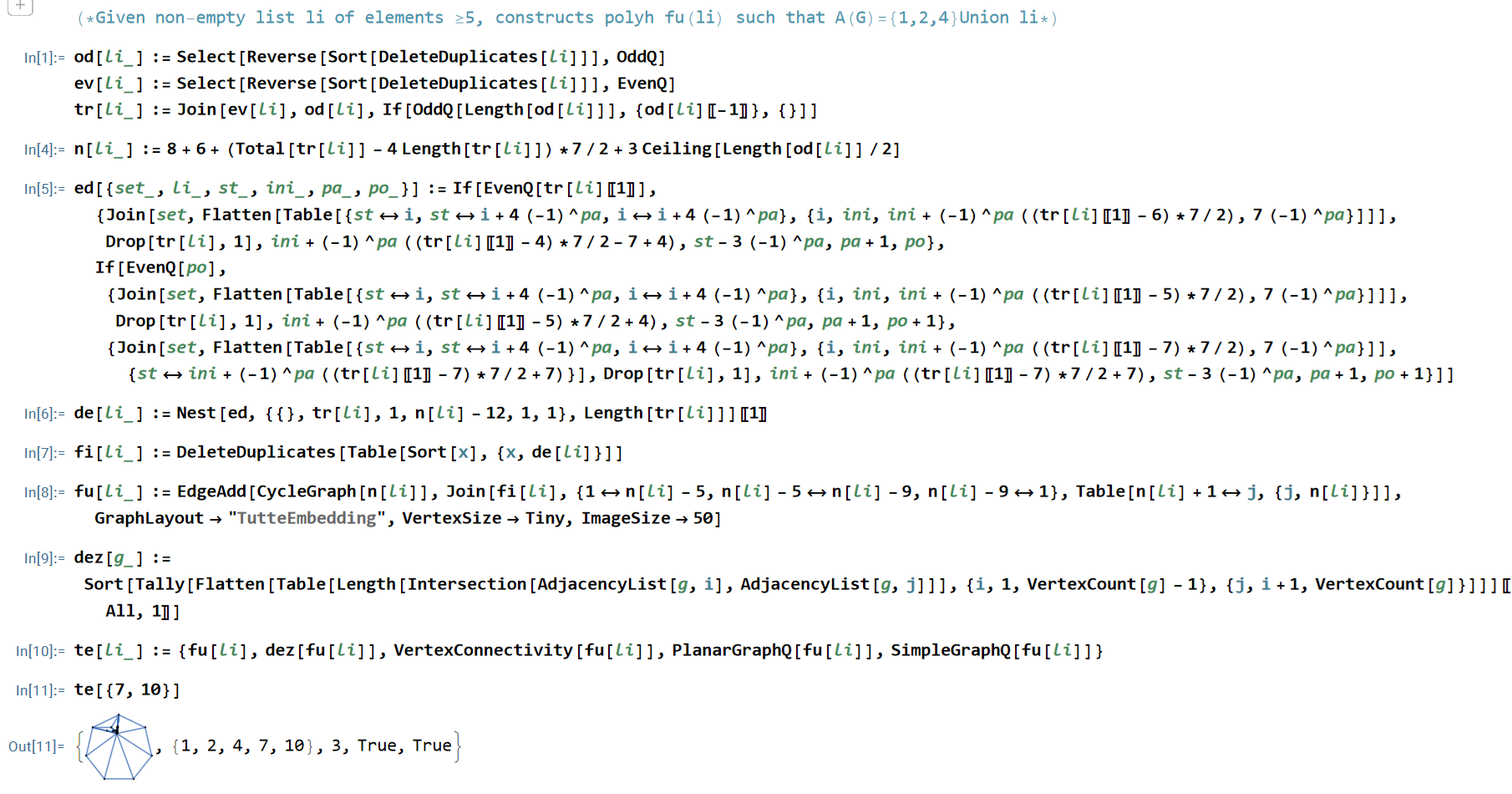}
	\caption{Code for Proposition \ref{prop:124}, in the Mathematica language.}
	\label{fig:124code}
\end{figure}

\paragraph{Acknowledgements.}
Riccardo W. Maffucci was partially supported by Programme for Young Researchers `Rita Levi Montalcini' PGR21DPCWZ \textit{Discrete and Probabilistic Methods in Mathematics with Applications}, awarded to Riccardo W. Maffucci.

\clearpage
\addcontentsline{toc}{section}{References}
\bibliographystyle{abbrv}
\bibliography{allgra}
\end{document}